\newtheorem{lemma}{Lemma}[section]
\newtheorem{theorem}[lemma]{Theorem}
\newtheorem{proposition}[lemma]{Proposition}
\newtheorem{prop}[lemma]{Proposition}
\newtheorem{cor}[lemma]{Corollary}
\newtheorem{claim*}{Claim}
\newtheorem{corollary}[lemma]{Corollary}
\theoremstyle{definition}
\newtheorem{remark}[lemma]{Remark}
\newtheorem{example}[lemma]{Example}
\newtheorem{problem}[lemma]{Problem}
\newtheorem{defn}[lemma]{Definition}
\def\O{\mathcal{O}}
\newcommand{\C}{{\mathbb C}}
\newcommand{\F}{{\mathbb F}}
\newcommand{\Q}{{\mathbb Q}}
\newcommand{\Z}{{\mathbb Z}}
\newcommand{\calB}{{\mathcal B}}
\newcommand{\calM}{{\mathcal M}}
\newcommand{\calR}{{\mathcal R}}
\newcommand{\OO}{{\mathcal O}}
\newcommand{\frakp}{{\mathfrak p}}
\newcommand{\homEA}[1]{\fbox{\raisebox{0ex}[4ex][2ex]{$#1$}}}
\newcommand{\homAE}[1]{\framebox[4em][c]{$#1$}}
\newcommand{\homAA}[1]{\framebox[4em][c]{\raisebox{0ex}[4ex][2ex]{$#1$}}}
\newcommand{\EndEA}[4]{{\setlength\arraycolsep{0pt} \left(\begin{array}{cc} #1 & \homAE{#2} \\ \homEA{#3} & \homAA{#4}\end{array}\right)}}
\newcommand{\homEEA}[2]{{\setlength\arraycolsep{0pt} \left(\begin{array}{cc} \homEA{#1} &\homEA{#2}\end{array}\right)}}
\newcommand{\homEEEtoEA}[6]{{\setlength\arraycolsep{0pt} \left(\begin{array}{ccc}
#1 & #2 & #3 \\
\homEA{#4} &\homEA{#5} &\homEA{#6} \end{array}\right)}}
\newcommand{\xyzw}{\EndEA{x}{y}{z}{w}}
\newcommand{\polmatrix}{\EndEA{1}{0}{0}{\lambda_A}}
\DeclareMathOperator{\D}{D}
\DeclareMathOperator{\I}{I}
\DeclareMathOperator{\Tr}{Tr}
\DeclareMathOperator{\End}{End}
\DeclareMathOperator{\Hom}{Hom}
\DeclareMathOperator{\Aut}{Aut}
\DeclareMathOperator{\Gal}{Gal}
\DeclareMathOperator{\disc}{disc}
\DeclareMathOperator{\N}{N}
\newcommand{\Int}{\text{normal}}
\newcommand{\isom}{\cong}
\numberwithin{equation}{section}
\numberwithin{table}{section}
\author{Sorina Ionica}
\address{Sorina Ionica: Laboratoire MIS, Universit\'e de Picardie Jules Verne, 33 Rue Saint Leu, Amiens 80039, France}
\email{sorina.ionica@u-picardie.fr}
\author[K{\i}l{\i}\c{c}er]{P{\i}nar K{\i}l{\i}\c{c}er}
\address{%
	P{\i}nar K{\i}l{\i}\c{c}er:
Bernoulli Institute for Mathematics, Computer 
  Science and Artificial Intelligence, 9747 AG Groningen, Netherlands
}
\email{p.kilicer@rug.nl}
\author{Kristin Lauter}
\address{Kristin Lauter: Facebook AI Research, Meta, Seattle, WA, USA}
\email{klauter@fb.com}
\author{Elisa Lorenzo Garc\'ia}
 \address{Elisa Lorenzo Garc\'ia: Institut de Math\'ematiques, Universit\'e de Neuch\^atel, Rue Emile-Argand 11, 2000, Neuch\^atel, Switzerland --- Laboratoire IRMAR, Office 602,
Universit\'e de Rennes 1, Campus de Beaulieu, 35042, Rennes Cedex, France} \email{elisa.lorenzo@unine.ch, elisa.lorenzogarcia@univ-rennes1.fr}
\author{Adelina M\^{a}nz\u{a}\c{t}eanu}
\address{Adelina M\^{a}nz\u{a}\c{t}eanu: Mathematisch Instituut, Niels Bohrweg 1, 2333 CA Leiden, Netherlands}
\email{m.manzateanu@math.leidenuniv.nl}
\author{Christelle Vincent}
\address{Christelle Vincent: Department of Mathematics and Statistics, University of Vermont, 16 Colchester Avenue, Burlington VT 05401}
\email{christelle.vincent@uvm.edu}
\title{Determining the primes of bad reduction of CM curves of genus~$3$}
\begin{document}
\begin{abstract} In this paper we introduce a new problem called the Isogenous Embedding Problem (IEP). The existence of solutions to this problem is related to the primes of bad reduction of CM curves of genus $3$ and we can detect potentially good reduction in absence of solutions.  
We propose an algorithm for computing the solutions to the IEP and run the algorithm through different families of curves. We were able to prove the reduction type of some particular curves at certain primes that were open cases in \cite{LLLR}. 
\end{abstract}

\maketitle
\setcounter{tocdepth}{1} 
\tableofcontents

\section{Introduction}

Let $\mathcal{O}$ be an order in an imaginary quadratic field $K$. The \emph{class polynomial} associated to~$\mathcal{O}$ is defined to be 
$$H_\mathcal{O}(t)=\prod_{\{E/\mathbb{C}\text{ has CM by }\mathcal{O}\}/\simeq}(t-j(E)),$$
that is, the polynomial whose roots are the $j$-invariants of elliptic curves $E$ with complex multiplication by~$\mathcal{O}$. It is an irreducible polynomial with integer coefficients. Its roots generate a class field of $K$ and are used for constructing elliptic curves with a given number of points, which are useful in cryptography \cite{BrezingWeng, Barretto, Morain}. When $\mathcal{O}=\mathcal{O}_K$ is the maximal order of $K$, the class polynomial $H_{\mathcal{O}_K}(t)$ is called the 
\emph{Hilbert class polynomial} and its roots generate the Hilbert class field of $K$. 

In order to compute the class polynomial $H_\mathcal{O}(t)$, we begin by computing the set of fundamental $\tau\in\mathbb{H}$ that correspond to elliptic curves with complex multiplication (CM) by $\O$. These values $\tau \in \mathbb{H}$ can be obtained by computing the root with positive imaginary part of each primitive quadratic form of discriminant $\text{disc}(\mathcal{O})$ up to equivalence. The elliptic curve corresponding to $\tau$ is obtained by evaluating the $j$-invariant as a modular function at $\tau$ using its $q$-expansion, which gives an approximate value for the $j$-invariants, and hence the roots of the class polynomial. Since the coefficients of $H_\mathcal{O}(t)$ are integers, by evaluating the $j$-invariants to enough precision, we can compute the exact values of the coefficients \cite{BBEL, Broker,DrewHilbert}. 

This construction, known as the \emph{CM Method}, can be generalized to curves of genus $g$ with CM by an order~$\mathcal{O}$ in a CM field $K$ of degree $2g$, that is, a totally imaginary quadratic extension of a totally real number field of degree $g$. The theory of Shimura and Taniyama \cite{Shimura-Taniyama} tells us how to compute the \emph{Riemann matrix} $\tau\in\mathbb{H}_g$ for each Jacobian with CM by $\mathcal{O}$ up to isomorphism. We then need curve invariants which are analogous to the $j$-invariant. These invariants need to be absolute (quotients of equal weight invariants having a power of the discriminant in the denominator so that they are always defined) and modular (given by Siegel modular functions so that we can approximate their values at a given $\tau$). We call these invariants \emph{good invariants}. Given such invariants (see \cite[Section 2]{Streng2014} for $g=2$; \cite[Section 5]{Lor20} and \cite{LR19} for $g=3$), we may define class polynomials and compute class fields of $K$ in a similar fashion. However, in contrast to the dimension~$1$ case,  the coefficients of these polynomials are not integers, but rational numbers. Therefore, efficient computational methods for numerically approximating these polynomials and proving their complexity rely on explicit bounds for the denominators of class polynomials.\footnote{By denominator of a polynomial $f\in \mathbb{Q}[X]$ we mean the smallest $D\in \mathbb{Z}$ such that $Df\in \mathbb{Z}[X]$.}

For $g = 2$, the denominators of class polynomials are products of powers of primes of geometric bad reduction of the corresponding curves. Thanks to the work of Bruinier, Goren, Lauter, Viray, and Yang \cite{BruinierYang, GorenLauter07, GL11, LV15b}, there are explicit formulas for computing these denominators. As a consequence, a bound on the runtime of the algorithm computing the class polynomials in terms of the discriminant of the field $K$ is given in \cite{Streng2014}.

In higher genus, several families of curves with CM have been computed: for genus~3 hyperelliptic curves \cite{BILV, Weng}; Picard curves \cite{AEPicard, KLS, KoikeWeng, LarioSomoza}; plane quartics \cite{KLLRSS}; superelliptic curves \cite{Somozathesis}. Each of these papers lists examples of CM curves, but for most of these examples only a heuristic argument is given that the curves have CM by the correct order. While some of them are proven correct by running an algorithm computing the endomorphism of the Jacobian \cite{CMSV}, obtaining bounds on the denominators of class polynomials would allow to prove the correctness of the output of these algorithms in general.

In this paper, we turn our attention to the case of genus 3.
First, we note genus 3 curves are either plane quartics or hyperelliptic curves. When all curves with CM by an order in a sextic CM field are plane quartics, or hyperelliptic curves, then one may define class polynomials over $\mathbb{Q}$ as in genus 2. For some rare sextic CM fields, called mixed in \cite{DIS}, both hyperelliptic curves and plane quartics with CM by that field exist. To work around the problem of handling two different sets of invariants for these curves, one may define class polynomials whose roots are invariants of CM points in the Galois orbit of a certain Galois group \cite{DI,DIS}. CM points on these orbits 
are either Jacobians of hyperelliptic curves or of plane quartics and a single set of invariants may be used. To mark the distinction, we will call the polynomials defined in this way  \textit{hyperelliptic class polynomials} and \textit{plane quartic class polynomials}.  

The primes appearing in the discriminant of a plane quartic, and consequently in the denominators of plane quartic class polynomials, 
are either primes of geometric bad reduction or of hyperelliptic reduction of the curve\cite{LLLR}. It is conjectured in \cite[Section 4]{KLLRSS} that the primes of hyperelliptic reduction cannot be bounded in terms of the discriminant of the order, which suggests that the CM method used for $g = 1,2$ cannot be directly generalized to plane quartics. The case of genus 3 hyperelliptic curves with CM is much simpler, since all primes appearing in the denominators are primes of bad reduction \cite{IKLLMMV_mod}. 
In \cite{BCLLMNO} and \cite{KLLNOS}, the authors formulate for genus 3 a problem analogous to the embedding problem studied by Goren  and Lauter for genus 2. This allows to establish  bounds on the primes of bad reduction,  but these bounds are unfortunately too large to be practical.

In this paper, we work towards giving a better bound.
Indeed, we give new better bounds in some particular cases (see Section \ref{Ss:BoundForp}). In the general case, our main contribution  is of a slightly different flavour: we give an algorithm  which given a sextic CM field outputs a precise short list of primes of bad reduction for the curves with CM by this field (see Section \ref{sec: MainAlgorithm}). 
One of the consequences of these results is that we are able to answer the question about the reduction type of certain plane quartics studied in \cite{KLLRSS, LLLR} (see Proposition \ref{prop:hyperred}).

\subsection{Outline}
In Section \ref{Sec: Embedding Problem}, we introduce the objects that allow us to detect bad reduction: the solutions to the REP (Reduction Embedding Problem) and to the IEP (Isogenous Embedding Problem). Those embedding problems are generalizations to the \textit{Embedding Problem} in genus 2 as defined in \cite{GorenLauter07}.  In Section~\ref{sec: embedding_img}, we give explicit equations  for the solutions to the IEP that will be used later not only to bound the primes of bad reduction, but also to determine them. In Section \ref{sec:eq}, we define a natural equivalence of solutions to the REP and the IEP. Finally, in Section \ref{sec:alg}, we present our algorithm for computing solutions to the IEP. 
We run the algorithm for some sextic CM fields.
Finally, in Section \ref{sec:comp}, we present the outputs, as well as the conclusions. In particular, we are able to determine the reduction type of some plane quartics at some primes for which this was an open problem in \cite[Table 4]{LLLR}.

\subsection{Notation and assumptions}\label{sec: Embd}

Throughout the paper, $K$ is a sextic CM field, that is, a totally imaginary quadratic extension of a totally real cubic field $K_+$. Thus, $K_+ = \Q(\mu)$ and $K = K_+(\eta)$, where $\eta$ is a totally imaginary element in $K$ and $\mu = -\eta^2$. We denote the maximal orders of $K$ and $K_+$ by $\O_K$ and $\O_{K_+}$, respectively. A generic order of $K$, i.e., a $\Z$-module of rank $6$ in $\O_K$, is denoted by $\mathcal{O}$ and we denote $\mathcal{O}_{+} :=\mathcal{O}\cap K_+$.

By a \textit{curve}, we mean a smooth, projective,
geometrically irreducible curve. We denote by~$X$ a curve of genus 3 with CM by $\OO$ and with absolutely simple Jacobian $J := J(X)$, and by $M$ a number field over which $X$ and its stable model are defined. 

We say that a curve $X$ of genus 3 defined over a 
field $M$ has \emph{complex multiplication} (CM) by $\OO\subset K$ 
if there exists an embedding 
$\iota: \OO \hookrightarrow \End(J_{\overline{M}})$ such that $\iota^{-1}(\End(J_{\overline{M}})) = \OO$, 
where $J$ is the Jacobian of~$X$. A \textit{CM type} $\Phi$ of $K$ is 
a set of $3$ non-conjugate complex embeddings 
$K\hookrightarrow \C$. Given a Jacobian   $J/M$ with CM by $K$, then there is a unique CM type~$\Phi$ of $K$ such that the analytic representation of $\End_0(J_{\overline{M}})$ is equivalent to the direct sum representation~$\oplus_{\phi\in \Phi}\phi$. 
We say that a CM type is \textit{primitive} if its restriction to any strict CM
subfield of $K$ is not a CM type. The CM type corresponding to $J$
is \textit{primitive} if and only if~$J$ is \textit{absolutely} simple.

We denote the endomorphism ring of an elliptic curve $E$ by $\mathcal{R}$ and we define $\calB := \mathcal{R}\otimes\Q$. In this paper, an elliptic curve is always defined over a finite field $\F_q$, where $q = p^r$ and $p$ is a prime number, so the endomorphism algebra $\calB$ is either an indefinite quaternion algebra or an imaginary quadratic field.
If $E$ is supersingular, then  $\mathcal{B}=\mathcal{B}_{p,\infty}$ is the indefinite quaternion algebra only ramified at $p$ and $\infty$, and $\mathcal{R}$ is a maximal order in $\mathcal{B}$.

\subsection*{Acknowledgements} The authors would like to thank the Lorentz Center in Leiden for hosting the Women in Numbers Europe 2 workshop in September 2016 and providing a productive and enjoyable environment for our initial work on this project. We are grateful to the organizers of WIN-E2, Irene Bouw, Rachel Newton and Ekin Ozman, for making this conference and this collaboration possible. We also thank Christophe Ritzenthaler for useful discussions. This project was supported by the Thomas Jefferson Fund of the French-American Cultural Exchange Foundation. In addition, the work of the fourth author has been partially funded by the \textit{Melodia} ANR-20-CE40-0013 project, and the work of the sixth author is supported by the National Science Foundation, DMS-1802323.

\section{Bad reduction and the embedding problem} 

\label{Sec: Embedding Problem}
In this section, we define two different embedding problems and we explain their connection to the primes of (geometric) bad reduction for curves of genus 3 with CM. Recall that the embedding problem for genus $2$ introduced in \cite[Section 3]{GorenLauter07} is the main ingredient in \cite{GorenLauter07,LV15b} for determining and controlling the primes and their exponents in the denominators of the coefficients of the Igusa Class Polynomials. 

First, we review some results from \cite{BCLLMNO} and \cite{KLLNOS}. We begin with enumerating all reduction types of CM curves of genus~3 modulo a prime ideal of the ring of integers of its field of definition.

\begin{prop}\label{reductionResult}\cite[Corollary 4.3]{BCLLMNO}
Let $X$ be a genus 3 curve with CM by an order $\O$ in a sextic CM field~$K$. Let $\mathfrak{p}$ be a prime ideal of $\O_M$, where $M$ is a number field over which $X$ and its stable model are defined. Then, one of the following statements holds for the special fiber $X_{\mathfrak{p}}$ of the stable reduction of $X$:
\begin{enumerate}
\item (good reduction) $X_{\mathfrak{p}}$ is a smooth curve of genus 3, 
\item \label{case:prodell}  $X_{\mathfrak{p}}$ has three irreducible components of genus 1,
\item \label{case:notprodell} $X_{\mathfrak{p}}$ has an irreducible component of genus 1 and one of genus 2. 
\end{enumerate}
\end{prop}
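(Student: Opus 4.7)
The plan is to combine the potentially-good-reduction theorem for CM Jacobians with the Néron-model description of the relative Picard scheme of a stable model, and then to enumerate the possible stable configurations of genus~$3$ of compact type.

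Since $X$ has CM by $\mathcal{O}$, its Jacobian $J = J(X)$ admits an action of $\mathcal{O}$ and therefore has potentially good reduction at every finite place by the Serre--Tate criterion. After replacing $M$ by a suitable finite extension I may assume that $J$ already has good reduction at $\mathfrak{p}$, so that the special fiber $J_{\mathfrak{p}}$ of its Néron model is an abelian variety of dimension~$3$. Passing now to the stable model $\mathcal{X}$ of $X$ over the local ring at $\mathfrak{p}$, the identity component of the relative Picard scheme of $\mathcal{X}$ coincides with that of the Néron model of $J$; consequently its special fiber $\Jac(X_{\mathfrak{p}})$ sits in an exact sequence
\[
0 \longrightarrow T \longrightarrow \Jac(X_{\mathfrak{p}}) \longrightarrow \prod_i \Jac(\widetilde{C}_i) \longrightarrow 0,
\]
where the $\widetilde{C}_i$ are the normalizations of the irreducible components of $X_{\mathfrak{p}}$ and $T$ is a torus of dimension equal to the first Betti number of the dual graph $\Gamma$ of $X_{\mathfrak{p}}$. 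The good reduction of $J$ forces $T = 0$, so $\Gamma$ is a tree and $X_{\mathfrak{p}}$ is of \emph{compact type}; in particular $\sum_i g(\widetilde{C}_i) = p_a(X_{\mathfrak{p}}) = 3$.

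It then remains to list the stable genus-$3$ curves of compact type. Combining the tree structure of $\Gamma$ with the stability conditions --- rational components meet at least three other branches, elliptic components meet at least one --- the only possibilities for the multiset of positive-genus component genera are $(3)$, $(2,1)$ and $(1,1,1)$, corresponding respectively to cases (1), (3) and (2) of the proposition. The subtle point I expect to require the most care is ruling out additional rational ``bridge'' components --- for instance a central $\mathbb{P}^{1}$ joining three elliptic leaves --- that would inflate the raw component count of $X_{\mathfrak{p}}$ beyond the three listed configurations; this is handled either by invoking the explicit classification of stable dual graphs in genus $3$, or by contracting such rational bridges (an operation that preserves $\Jac(X_{\mathfrak{p}})$ up to isogeny) so that the trichotomy can be read off from the isogeny decomposition of the Jacobian itself.
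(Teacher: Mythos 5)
Your argument is correct and is essentially the one behind the cited result: the paper itself gives no proof of this proposition but quotes it from \cite[Corollary~4.3]{BCLLMNO}, whose proof likewise runs through Serre--Tate potential good reduction of $J$, the resulting vanishing of the toric part (so that the dual graph is a tree and $X_{\mathfrak{p}}$ is of compact type with $\sum_i g(\widetilde{C}_i)=3$), and the enumeration of stable genus-$3$ configurations. The ``subtle point'' you flag is less delicate than you fear: the statement only asserts that $X_{\mathfrak{p}}$ \emph{has} three genus-$1$ components (resp.\ a genus-$1$ and a genus-$2$ component), so a central rational component joining three elliptic curves is already covered by case (2), while in the $(2,1)$ and $(3)$ configurations stability (every rational component of a tree-like stable curve must carry at least three nodes, and rational leaves are forbidden) rules out any extra rational components outright.
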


If $X$ and $\mathfrak{p}$ are such that one of the two last cases of Proposition~\ref{reductionResult} holds, then we say that $X$ has \textit{bad reduction} at $\mathfrak{p}$. The reader should be aware that what we call bad reduction in this paper is usually called \emph{geometric bad reduction}. 

The following result provides a motivation for the first embedding problem that we consider, namely the Reduction Embedding Problem.

\begin{proposition} \label{prop: embeddingR}\cite[Theorem 4.5 and Proposition 4.8]{BCLLMNO}
Let $X$ be a curve of genus $3$ defined over a number field $L$ with CM by an order $\OO$ in a sextic CM field~$K$. Suppose that the Jacobian~$J$ of $X$ is absolutely simple. Let $M$ be a  finite extension of $L$ such that $J$ has good reduction everywhere and $X$ has the stable model over $\O_M$. Let  $\mathfrak{p}$ be a prime ideal in $\O_M$ lying above~$p \subset \Z$. 

If $X$ has bad reduction modulo $\mathfrak{p}$, then there exists an elliptic curve~$E$ defined over $\overline{\mathbb{F}}_p$, a principally polarized abelian surface $A$ defined over $\overline{\mathbb{F}}_p$ with a polarization $\lambda_A$, and a ring embedding
\begin{equation*}
\iota_0:\,\O \hookrightarrow \End(E\times A)
\end{equation*}
such that the Rosati involution coming from the polarization $1\times \lambda_A$ induces complex conjugation on $\OO$. Moreover, in this situation, $A\sim E^2$ and the embedding can be taken to be optimal, i.e. with $[\iota_0^{-1}(\operatorname{End}(E\times A)\cap (\iota_0(\mathcal{O})\otimes\mathbb{Q})):\mathcal{O}]$ a power of $p$.
\end{proposition}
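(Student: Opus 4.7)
The strategy is to combine two ingredients: $J$ has good reduction at $\mathfrak{p}$, so endomorphisms reduce faithfully and the principal polarization persists; and the hypothesis of bad reduction of $X$ forces, via Proposition~\ref{reductionResult}, a product decomposition of the stable Jacobian as a principally polarized abelian variety (ppav).

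First I would use good reduction of $J$ to obtain an injective reduction map on endomorphism rings $\End(J_{\overline{M}}) \hookrightarrow \End(J_{\mathfrak{p}, \overline{\mathbb{F}}_p})$, compatible with the principal polarizations and hence with the Rosati involutions; composing with the CM embedding gives $\widetilde{\iota}: \O \hookrightarrow \End(J_{\mathfrak{p}})$ whose Rosati involution restricts to complex conjugation on $\O$. Now, because $J$ has good reduction, $X_\mathfrak{p}$ must be of compact type (otherwise $J_\mathfrak{p}$ would acquire a nontrivial toric part, contradicting properness of the Jacobian at $\mathfrak{p}$). By Proposition~\ref{reductionResult}, $X_\mathfrak{p}$ is therefore a tree of smooth curves of one of the two listed shapes, and the principally polarized Jacobian of this stable curve is the product of the Jacobians of its components. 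This yields an isomorphism of ppavs $(J_\mathfrak{p}, \lambda_\mathfrak{p}) \cong (E, \lambda_E) \times (A, \lambda_A)$ with $E$ an elliptic curve and $(A, \lambda_A)$ a principally polarized abelian surface over $\overline{\mathbb{F}}_p$; transporting $\widetilde{\iota}$ through this isomorphism yields the desired $\iota_0$ with the required polarization compatibility.

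To prove $A \sim E^2$, suppose for contradiction that $E$ shares no simple isogeny factor with $A$. Then the algebra $\End(E \times A)\otimes\Q$ splits as $(\End E\otimes\Q) \times (\End A\otimes\Q)$, and the field $K = \O\otimes\Q$ must embed into one of the two factors. But $K$ has degree $6$, so a faithful action of $K$ on an abelian variety $B$ requires $\dim B \geq 3$ (by the Tate-module dimension bound $2\dim B \geq [K:\Q]$), and neither $E$ (dimension $1$) nor $A$ (dimension $2$) qualifies. Hence $E$ is isogenous to a simple factor of $A$, and since $\dim A = 2$, this forces $A \sim E^2$.

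Finally, for optimality, set $\O' := \iota_0^{-1}(\End(E\times A)) \cap (\iota_0(\O)\otimes\Q)$, so that $\O \subseteq \O' \subseteq \O_K$. For each prime $\ell \neq p$ dividing $[\O':\O]$, I would pass to an $\ell$-isogenous ppav obtained by quotienting $E\times A$ by a characteristic $\ell$-torsion subgroup arising from the $\O'$-module structure; this replaces $\iota_0$ by one that is $\ell$-saturated, and iterating over the finitely many such primes produces a model in which $[\O':\O]$ is a power of $p$. Since the chosen isogenies are $\O'$-equivariant, both the product decomposition and the isogeny $A\sim E^2$ are preserved. The main obstacle I anticipate is this last step: verifying that the $\ell$-saturation procedure can be carried out while simultaneously preserving the product decomposition $E \times A$ and the product polarization $1\times\lambda_A$; a secondary delicate point is justifying, in the second paragraph, that the product polarization of the Jacobian of the stable curve really matches the reduction of the principal polarization $\lambda_J$ rather than agreeing with it only up to isogeny.
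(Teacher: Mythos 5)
Your overall route is the same as the paper's: the paper's proof is itself only a sketch that (i) reduces endomorphisms through the N\'eron model of $J$ at $\mathfrak{p}$, (ii) invokes Proposition~\ref{reductionResult} to write $J_{\mathfrak{p}}\cong E\times A$ as a principally polarized product, and then (iii) outsources the Rosati condition to \cite[Proposition 4.8]{BCLLMNO}, the isogeny $A\sim E^2$ to \cite[Theorem 4.5]{BCLLMNO}, and optimality to \cite[Corollary 6.1.2]{GL11}. You differ only in trying to prove the outsourced steps directly. Your argument for $A\sim E^2$ is in substance the standard one: a degree-$6$ field must act on every isotypic component of $E\times A$, and any abelian variety admitting a faithful $K$-action has dimension at least $\tfrac{1}{2}[K:\Q]=3$, so $E\times A\sim E^3$. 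Two small repairs: a unital homomorphism from a field into a product of rings is injective into \emph{both} factors, not ``one of'' them (this only strengthens your contradiction); and your final sentence skips a case, since ``$E$ is isogenous to a simple factor of $A$ and $\dim A=2$'' is compatible with $A\sim E\times E'$ for $E'$ not isogenous to $E$ --- you must rerun the dimension count on the isotypic component of $E'$ to exclude it. Your ``secondary delicate point'' is fine: the canonical principal polarization of the Jacobian of a stable curve of compact type is the product of the canonical polarizations of its components and agrees with the reduction of the theta polarization.

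The one genuine gap is the optimality step, which you yourself flag. Quotienting $E\times A$ by an $\ell$-torsion subgroup arising from the $\O'$-action changes the abelian variety and a priori destroys both the splitting into $E\times A$ and the principal product polarization $1\times\lambda_A$; your sketch does not explain how to recover a \emph{principally} polarized product of the same shape after saturation, nor why the new Rosati involution still induces complex conjugation. This is precisely the content of \cite[Corollary 6.1.2]{GL11}, which the paper cites rather than reproves; as written, your last paragraph is a statement of intent rather than an argument, so either carry out that construction or cite the result as the paper does.
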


\begin{proof}
The proof of this proposition is given in \cite{BCLLMNO}, but we sketch the argument here to set up the notation. Since the Jacobian $J$ of $X/L$ has complex multiplication, by Serre--Tate \cite{SerreTate}, there exists an extension $M$ of~$L$ such that $J$ has potentially good reduction everywhere. Let $\mathfrak{p}\mid p$ be a prime in $\O_M$ such that $X$ has bad reduction modulo~$\mathfrak{p}$ whereas $J$ has good reduction modulo~$\mathfrak{p}$. Since $X$ has bad reduction, the reduction $J_{\mathfrak{p}}$ is decomposable. 

By Proposition \ref{reductionResult}, there are two possibilities for how $J_{\mathfrak{p}}$ decomposes; it is either a product of an elliptic curve and an abelian surface or a product of three elliptic curves. In both cases, one may assume that  $J_{\mathfrak{p}}\cong E \times A$ as principally polarized abelian varieties, where $E$ is an elliptic curve and $A$ is an abelian surface with a fixed principal polarization $\lambda_A$. We note that this includes the case where $A\isom E_1\times E_2$ with the product polarization. 

The embedding $\End(J)\hookrightarrow\End(J_{\mathfrak{p}})$ gives an embedding $\O \hookrightarrow \End(E\times A)$ and \cite[Proposition 4.8]{BCLLMNO} implies the condition on the Rosati involution. Moreover, it follows from \cite[Theorem 4.5]{BCLLMNO} that $A\sim E^2$ and from \cite[Corollary 6.1.2]{GL11} that the embedding is optimal.

\end{proof}

\begin{remark}\label{rmk:2cases} Notice that a supersingular abelian surface over $\overline{\mathbb{F}}_p$ is always isomorphic to $E^2$ (case of $a$-number $0$, see \cite{Oort1974}) or to $E^2/\alpha$ (case of $a$-number $1$, see \cite{Oort1975}) where $E$ is any supersingular elliptic curve over $\overline{\mathbb{F}}_p$ and $\alpha$ is an $\alpha$-group.
\end{remark}

\begin{proposition}\label{prop:ss} 
Let $X$ be a curve of genus $3$ defined over a number field $L$ with CM by an order $\OO$ in a sextic CM field~$K$. Suppose that the Jacobian~$J$ of $X$ is absolutely simple. 
Let~$M$ be a  finite extension of $L$ such that the $J$ has good reduction everywhere and $X$ has a stable model defined over $\O_M$. 
Let $\mathfrak{p} \nmid 2$ be a prime ideal of $\O_M$ such that $X$ has bad reduction at $\mathfrak{p}$. Then $J_{\mathfrak{p}}\sim E^3$, where $E$ is a supersingular elliptic curve.
\end{proposition}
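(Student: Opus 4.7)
The plan is to apply Proposition~\ref{prop: embeddingR} and immediately reduce to showing that the elliptic curve $E$ appearing there cannot be ordinary. Proposition~\ref{prop: embeddingR} already gives $J_{\mathfrak{p}}\sim E\times A$ with $A\sim E^2$, hence $J_{\mathfrak{p}}\sim E^3$; since $E$ is defined over $\Fbar_p$, the only alternative to supersingular is ordinary, and that is what I will rule out, using the absolute simplicity (equivalently, the primitivity of the CM type) of $J$.

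So suppose for contradiction that $E$ is ordinary and set $\mathcal{B}:=\End^0(E)$, which is then an imaginary quadratic field. Tensoring the embedding $\iota_0:\OO\hookrightarrow \End(E\times A)$ with $\Q$ produces
\[ K\;\hookrightarrow\; \End^0(E\times A)\;=\;\End^0(E^3)\;=\;M_3(\mathcal{B}). \]
The key observation I will use is that this forces $\mathcal{B}\hookrightarrow K$. To see it, view $V=\mathcal{B}^3$ as a $6$-dimensional $\Q$-vector space on which $K$ acts faithfully; since $[K:\Q]=6$, the space $V$ is $1$-dimensional over $K$. Because $\mathcal{B}$ is the center of $M_3(\mathcal{B})$, its action on $V$ commutes with that of $K$ and hence lands in $\End_K(V)=K$, yielding an embedding of fields $\mathcal{B}\hookrightarrow K$.

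The contradiction is then immediate. The compositum $\mathcal{B}\cdot K_+$ is a CM field of degree $6$ contained in $K$, so it equals $K$, and $K$ contains the strict CM subfield $\mathcal{B}$. The CM type $\Phi$ of $K$ attached to $J$ therefore restricts to a CM type on $\mathcal{B}$, so $\Phi$ is not primitive, contradicting the absolute simplicity of $J$. Hence $E$ is supersingular. The only substantive step is the dimension-counting argument putting $\mathcal{B}$ inside $K$; once that is done, primitivity of $\Phi$ finishes things off. I do not see precisely where the hypothesis $\mathfrak{p}\nmid 2$ enters my argument, and I expect that it is inherited from Proposition~\ref{prop: embeddingR}, specifically from the assertion $A\sim E^2$ which draws on \cite[Theorem~4.5]{BCLLMNO}.
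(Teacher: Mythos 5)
Your reduction to ruling out the ordinary case, and your dimension count showing that $\mathcal{B}=\End^0(E)$ must embed into $K$ (the paper gets the same conclusion by citing the center of $\mathcal{M}_{3\times 3}(k)$ via \cite{ChaiConradOort}), reproduce the first half of the paper's proof and are fine. The gap is in your last step: from ``$K$ contains the strict CM subfield $\mathcal{B}$'' you conclude that ``$\Phi$ restricts to a CM type on $\mathcal{B}$, so $\Phi$ is not primitive.'' That implication is false for sextic fields. A CM type $\Phi$ of $K$ consists of three embeddings, each of which restricts to one of the two embeddings of the imaginary quadratic subfield $k$; the restriction is a CM type of $k$ only if all three agree on $k$, i.e.\ only if $\Phi$ is induced from $k$, and nothing you proved forces that. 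The paper itself points out that a sextic CM field containing an imaginary quadratic field admits \emph{both} primitive and non-primitive CM types (unlike the quartic case), and most of the worked examples in Tables \ref{table: cubicANDimagfields} and \ref{table: planequarticfields} are precisely sextic fields with imaginary quadratic subfields carrying absolutely simple CM Jacobians. So your contradiction evaporates exactly when $k\subset K$.

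This is the case the paper spends the second paragraph of its proof on, and it requires genuinely more work: using Kani's theorem to write $J_{\mathfrak{p}}\simeq E_1\times E_2\times E_3$ with each $E_i$ having CM by $k$ (with conductors $f_i$, $f=\operatorname{lcm}(f_i)$), observing that $k$ then sits \emph{diagonally} in $\mathcal{M}_{3\times3}(k)$, and invoking \cite[Proposition 5.8]{KLLNOS} to contradict primitivity of $\Phi$ when $p\nmid 2fD$; in the remaining case $p\mid fD$, $p\neq 2$, a computation with the norm and trace of the Frobenius $\pi_{E_i}=a+bf_i\omega$ shows $p\mid\Tr(\pi_{E_i})$, so $E_i$ cannot be ordinary. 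This is also exactly where the hypothesis $\mathfrak{p}\nmid 2$ enters --- the step you admit you could not locate --- since $p\neq2$ is needed both in the statement of the cited primitivity criterion and in the Frobenius-trace argument. To repair your proof you would need to supply this entire second case.
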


\begin{proof} 
By Proposition \ref{prop: embeddingR}, we have $J_{\mathfrak{p}}\sim E^3$ where $E$ is an elliptic curve. Assume that $E$ is ordinary. Then, $E$ has CM by an order of an imaginary quadratic field $k := \Q(\sqrt{D})$, where $D$ is the fundamental discriminant. 
By \cite[Theorem 2]{Kani}, there are ordinary elliptic curves $E_i$ with CM by $k$ such that $E_i$ is isogenous to~$E$ and $J_\mathfrak{p}\simeq E_1\times E_2\times E_3$. Thus, we have $\text{End}(J_\mathfrak{p})\otimes\Q\simeq\mathcal{M}_{3\times 3}(k)$ and, by \cite[Theorem 1.3.1.1]{ChaiConradOort}, the field $k$ is isomorphic to the center of ${M}_{3\times 3}(k)$ hence isomorphic to a subfield in $K$. Therefore, if $K$ does not contain any imaginary quadratic field, then $E$ is not ordinary and we get a contradiction. This has already been proven in \cite[Proposition 4.6]{BCLLMNO}. 

Now suppose that $K$ contains an imaginary quadratic field. Then, the imaginary quadratic field of $K$ is isomorphic to $k$. We identify $k$ and the imaginary quadratic subfield in $K$. Write $\End(E_i)=\langle 1, f_i\omega\rangle$, where $\omega=\frac{D+\sqrt{D}}{2}$ and $f_i$ is the conductor of $\End(E_i)$ in the maximal order of $k$. Let $f=\operatorname{lcm}(f_1,f_2,f_3)$. Since $k$ is embedded diagonally in $\mathcal{M}_{3\times 3}(k)$, if $p \nmid 2fD$, this contradicts the primitivity of $\Phi$, by \cite[Proposition 5.8]{KLLNOS}. If $p\mid fD$ and $p\neq2$,  then $p\mid f_iD$ for some $i$. Write the Frobenius element $\pi_{E_i}=a+bf_i\omega$. Then, $N(\pi_{E_i})=abf_iD+a^2+b^2f_i^2\frac{D(D-1)}{4}$ is a power of $p$, so $p\mid a$. Hence, $p\mid \Tr(\pi_{E_i})=2a+bf_iD$, which implies that $E_i$ is not ordinary. This is a contradiction, so $E_i$ is supersingular.

\end{proof}

\begin{corollary}\label{cor: pot_good_primes} Let $X$ be a curve of genus 3 with CM by the maximal order of a sextic CM field $K$. Let $M$ be a number field such that the Jacobian $J$ of $X$ has good reduction at every prime of $\O_M$. Let $\frakp_M \subset \O_M$ and let $p:= \frakp_M \cap \Z$ be an odd prime. 
If
$$p \O_K = \frakp_1\overline{\frakp_1}\frakp_2\overline{\frakp_2} \quad \text{or} \quad \frakp_1\overline{\frakp_1}\frakp_2\frakp_3 \quad \text{or} \quad \frakp_1\overline{\frakp_1}\frakp_2\overline{\frakp_2}\frakp_3\quad \text{or} \quad \frakp_1\overline{\frakp_1}\frakp_2\overline{\frakp_2}\frakp_3\overline{\frakp_3}$$
then the curve has potential good reduction at $\frakp_M$.  Moreover, if $K/\Q$ is a Galois extension, then $p \O_K = \frakp\overline{\frakp}$ and
$\frakp^3\overline{\frakp}^3$ also implies that the curve has potential good reduction at $\frakp_M$. 
\end{corollary}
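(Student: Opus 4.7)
The plan is to argue by contradiction. Assume $X$ has bad reduction at $\frakp_M$; since $p$ is odd, Proposition \ref{prop:ss} gives $J_{\frakp_M}\sim E^3$ for a supersingular elliptic curve $E/\overline{\F}_p$. Combining this with the embedding of $\OO_K$ into $\End(E\times A)$ furnished by Proposition \ref{prop: embeddingR} (together with the isogeny $A\sim E^2$) and tensoring with $\Q$, we obtain a field embedding
\[
K\hookrightarrow \End^0(E^3) \cong \M_3(\calB_{p,\infty}).
\]

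Because $[K:\Q]=6$ equals the degree of $\M_3(\calB_{p,\infty})$ as a $\Q$-algebra, $K$ sits inside as a maximal commutative subfield, and therefore $K$ must split $\M_3(\calB_{p,\infty})$. By Hasse-Brauer-Noether this is equivalent to the vanishing of the local invariant $\inv_{\frakp}\bigl(\M_3(\calB_{p,\infty})\otimes_\Q K_\frakp\bigr)$ at every place $\frakp$ of $K$. At the archimedean places the condition is automatic because $K$ is totally complex, and at finite places not above $p$ the invariant is already zero; at $\frakp\mid p$ the invariant equals $[K_\frakp:\Q_p]/2 \pmod 1$, which vanishes precisely when $e(\frakp/p)f(\frakp/p)$ is even. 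The embedding thus forces every prime $\frakp$ of $K$ above $p$ to have $e(\frakp/p)f(\frakp/p)$ even; establishing this Brauer-invariant criterion is the main technical step, after which everything reduces to splitting-behaviour bookkeeping.

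To finish, I verify that each factorization shape in the statement forces some prime of $K$ above $p$ to have $e\,f$ odd, contradicting the embedding. In the four non-Galois shapes every ramification index is $1$, so only the parity of the residue degrees matters. A self-conjugate prime $\frakp$ appearing with exponent one is inert in $K/K_+$, whence $f(\frakp/p)=2f(\frakp\cap\OO_{K_+}/p)$ is automatically even; but each of the four shapes also contains at least one split pair $\frakp,\overline{\frakp}$ above some prime $\frakq$ of $K_+$, for which $f(\frakp/p)=f(\frakq/p)$. The residue degrees of the primes of $K_+$ above $p$ sum to $[K_+:\Q]=3$, which is odd, so at least one such $f(\frakq/p)$ is odd; a brief inspection of the four shapes confirms that this odd value always belongs to a split $\frakq$, producing a prime of $K$ with $e\,f$ odd. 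In the two Galois shapes the common invariant satisfies $g\,e\,f=6$ with $g=2$, giving $e\,f=3$, again odd. In every listed case the required embedding cannot exist, and so $X$ has potential good reduction at $\frakp_M$.
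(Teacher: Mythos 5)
Your proof is correct, but it takes a genuinely different route from the paper's. The paper disposes of all the listed splitting types in two sentences by citing Zaytsev's computation that the $p$-rank of $J \bmod \frakp_M$ is nonzero for these decomposition shapes (and \cite[Proposition 4.1]{KLLRSS} for the Galois cases), which is incompatible with the supersingular decomposition $J_{\frakp_M}\sim E^3$ forced by Proposition~\ref{prop:ss}. You instead use Proposition~\ref{prop:ss} to produce an embedding $K\hookrightarrow \End(E^3)\otimes\Q\cong \M_3(\calB_{p,\infty})$ and invoke the classical fact that a degree-$6$ subfield of a degree-$6$ central simple algebra is a splitting field, so that by Albert--Brauer--Hasse--Noether every prime $\frakp\mid p$ of $K$ must have $e(\frakp/p)f(\frakp/p)$ even; your case check that each listed shape violates this parity condition is accurate (in the four unramified shapes the split pairs inherit the residue degrees of primes of $K_+$, which sum to $3$, and in the two Galois shapes $ef=3$ outright). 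The trade-off: the paper's argument outsources the arithmetic to Zaytsev's $p$-rank formulas (essentially Shimura--Taniyama), which in principle give finer information than mere non-supersingularity, whereas your argument is self-contained modulo standard Brauer-group theory, treats the Galois and non-Galois cases uniformly by a single criterion, and makes transparent exactly which splitting types can be excluded this way (namely, those with some $\frakp\mid p$ of odd local degree). Both hinge on the same input, Proposition~\ref{prop:ss}, and both are valid.
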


\begin{proof}
Zaytsev \cite{Zaytsev} proves that the $p$-rank of the Jacobian $J \mod \frakp_M$ is different than $0$ in these cases. So by  Proposition~\ref{prop:ss}, we conclude that $X$ does not have bad reduction in these cases. The statement about the Galois case follows from \cite[Proposition 4.1]{KLLRSS}. 
\end{proof}

We now state our first embedding problem for genus 3 CM curves which is a generalization of the embedding problem for genus 2 CM curves given in \cite{GorenLauter07}. 

\begin{problem}\label{Prob:REP} \textit{(The Reduction Embedding Problem)} Given an order $\O$ in a sextic CM field~$K$ and a prime number $p$, the Reduction Embedding Problem (REP) is to find a supersingular elliptic curve $E$ and a principally polarized abelian surface $(A,\lambda_A)$ both defined over $\overline{\mathbb{F}}_{p}$, and a ring embedding
\begin{equation*}
\iota_0 \colon \O \hookrightarrow \End(E\times A)
\end{equation*}
such that the Rosati involution coming from the product polarization $1\times \lambda_A$ induces complex conjugation on $\OO$. We call the tuple $(E\times A, 1\times \lambda_A, \iota_0)$ a solution to the REP for $(\mathcal{O},p)$.
\end{problem}

The following corollary is a direct consequence of Proposition \ref{prop: embeddingR}. 

\begin{cor}\label{badred-REP} 
With the notation and the assumptions of Proposition~\ref{prop: embeddingR}, a genus $3$ curve with CM by $\mathcal{O}\subset K$ and a prime $\mathfrak{p}\mid p$, with $p \neq 2$, of bad reduction for $X$ produce a solution to the REP. 
\end{cor}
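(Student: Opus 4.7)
The plan is to obtain the corollary essentially as a direct assembly of Proposition~\ref{prop: embeddingR} and Proposition~\ref{prop:ss}. First, I would apply Proposition~\ref{prop: embeddingR} verbatim to the curve $X$ and the prime $\mathfrak{p}$: since $X$ has bad reduction modulo $\mathfrak{p}$, this yields an elliptic curve $E/\overline{\mathbb{F}}_p$, a principally polarized abelian surface $(A,\lambda_A)/\overline{\mathbb{F}}_p$, and a ring embedding $\iota_0 \colon \mathcal{O} \hookrightarrow \End(E\times A)$ such that the Rosati involution associated to $1\times \lambda_A$ restricts to complex conjugation on $\mathcal{O}$. This already produces a tuple $(E\times A, 1\times \lambda_A, \iota_0)$ that matches every condition appearing in the statement of Problem~\ref{Prob:REP}, with the single exception of the supersingularity of $E$.

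To discharge the remaining supersingularity condition, I would invoke Proposition~\ref{prop:ss}, whose hypotheses are exactly those of Proposition~\ref{prop: embeddingR} together with $\mathfrak{p}\nmid 2$; since the corollary assumes $p\neq 2$ (equivalently $\mathfrak{p}\nmid 2$), this proposition applies and gives $J_{\mathfrak{p}}\sim E^3$ with $E$ supersingular. The elliptic curve $E$ appearing in the decomposition $J_{\mathfrak{p}}\cong E\times A$ used to construct $\iota_0$ is therefore supersingular, as required by the definition of a solution to the REP.

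Combining the two steps, the tuple $(E\times A, 1\times \lambda_A, \iota_0)$ is a solution to the REP for the pair $(\mathcal{O},p)$, which is exactly the conclusion of the corollary. There is no real obstacle in this argument: the substantive content is contained in Propositions~\ref{prop: embeddingR} and~\ref{prop:ss}, and the role of the assumption $p\neq 2$ is solely to ensure supersingularity via Proposition~\ref{prop:ss}.
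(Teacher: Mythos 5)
Your proof is correct and follows exactly the route the paper intends: the paper states the corollary as a direct consequence of Proposition~\ref{prop: embeddingR}, and the supersingularity required by Problem~\ref{Prob:REP} (the only condition not already delivered by that proposition) is supplied by Proposition~\ref{prop:ss} under the hypothesis $p\neq 2$, since supersingularity is an isogeny invariant. Your write-up simply makes explicit what the paper leaves implicit.
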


We will discuss the converse of the corollary in Section \ref{sec:REPimpliesBadRed}.

The ideas for the proof of the following proposition are given in \cite[Section 3]{KLLNOS}, but we reproduce it here for completeness, using our terminology. 

\begin{proposition} \label{prop: embeddingI}
Let $X$ be a curve of genus $3$ defined over a number field $L$ with CM by an order $\OO$ in a sextic CM field~$K$. Suppose that the Jacobian $J$ of $X$ is absolutely simple. Let $M$ be a  finite extension of $L$ such that $J$ has good reduction everywhere and $X$ has a stable model over $\O_M$. 
If $X$ has bad reduction modulo a prime ideal $\mathfrak{p}\subset \O_M$, with $\mathfrak{p} \nmid 2$, then there exists a supersingular elliptic curve $E$ defined over $\overline{\mathbb{F}}_{p}$ with endomorphism ring $\mathcal{R}$, 
a polarization $\lambda=\begin{pmatrix}\alpha & \beta\\ \beta^\vee & \gamma\end{pmatrix}$ on $E^2$ where $\alpha, \gamma \in \Z_{>0}$, $\beta \in \calR$, $n:= \alpha\gamma - \beta\beta^\vee \in \Z_{>0}$ 
and a ring embedding
\begin{equation*}
\O \hookrightarrow \mathcal{M}_{3\times 3}(\mathcal{R}/n)
\end{equation*}
such that the Rosati involution coming from the polarization $1\times \lambda$ induces complex conjugation on $\OO$ and the entries of the first row are in $\mathcal{R}$.
\end{proposition}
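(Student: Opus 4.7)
The plan is to bootstrap from Propositions~\ref{prop: embeddingR} and~\ref{prop:ss}, which under the hypotheses supply a supersingular elliptic curve $E/\overline{\F}_p$ with endomorphism ring $\calR$, a principally polarized abelian surface $(A,\lambda_A)$ with $A\sim E^2$, and a ring embedding $\iota_0\colon \OO\hookrightarrow \End(E\times A)$ whose Rosati involution for $1\times\lambda_A$ induces complex conjugation on $\OO$. The goal is to rewrite this data in terms of $E^3 = E\times E^2$ carrying a non-principal polarization $1\times\lambda$ and to transfer $\iota_0$ into $\calM_{3\times 3}(\calR/n)$.

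First I would produce $\lambda$ as a pullback: choose any isogeny $\psi\colon E^2\to A$ (which exists since $A\sim E^2$) and set $\lambda := \psi^\vee\circ\lambda_A\circ\psi$, a polarization on $E^2$ of degree $(\deg\psi)^2$. Under the identification $\Hom(E^2,(E^2)^\vee)\cong \calM_{2\times 2}(\calR)$ coming from the product principal polarization on $E^2$, $\lambda$ corresponds to a Hermitian matrix $\begin{pmatrix}\alpha & \beta\\ \beta^\vee & \gamma\end{pmatrix}$: Hermiticity forces $\alpha$ and $\gamma$ to be fixed by quaternion conjugation, hence in $\Z$, while $\beta\in\calR$; ampleness of $\lambda$ yields $\alpha,\gamma>0$; and the reduced-determinant computation gives $n := \alpha\gamma-\beta\beta^\vee = \deg\psi > 0$.

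Next I would transfer $\iota_0$ through $\Psi := \id_E\times\psi\colon E^3\to E\times A$. A direct block calculation gives $\Psi^\vee\circ(1\times\lambda_A)\circ\Psi = 1\times\lambda$, so the polarizations are compatible under $\Psi$. Writing $\iota_0(x) = \begin{pmatrix}a_x & b_x\\ c_x & d_x\end{pmatrix}$ in block form, conjugation by $\Psi$ yields the rational matrix $\begin{pmatrix}a_x & b_x\psi\\ \psi^{-1}c_x & \psi^{-1}d_x\psi\end{pmatrix}\in \calM_{3\times 3}(\calB)$, and this defines a ring homomorphism $\OO \to \calM_{3\times 3}(\calB)$. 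Using the integral isogeny $\psi'\colon A\to E^2$ satisfying $\psi'\psi=[n]_{E^2}$ and $\psi\psi'=[n]_A$ (which exists because $\ker\psi\subset E^2[n]$), one writes $\psi^{-1} = \frac{1}{n}\psi'$. The first row $(a_x,\,b_x\psi)$ then lies in $\calR\oplus\Hom(E^2,E) = \calR^{\oplus 3}$ without denominators, accounting for the first-row condition.

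Finally, I would descend to $\calM_{3\times 3}(\calR/n)$. The image of the ring homomorphism above lies in the additive subgroup of $\calM_{3\times 3}(\calB)$ whose first row is integral in $\calR$ and whose remaining entries have denominator dividing $n$; taking ``fractional parts'' on those remaining entries via the isomorphism $\tfrac{1}{n}\calR/\calR\cong \calR/n$ produces the desired map $\iota\colon \OO\to \calM_{3\times 3}(\calR/n)$. The Rosati-involution condition for $1\times\lambda$ descends from the compatibility $\Psi^\vee(1\times\lambda_A)\Psi=1\times\lambda$ together with the Rosati property of $\iota_0$, and injectivity of $\iota$ follows from the optimality statement in Proposition~\ref{prop: embeddingR} combined with the primitivity of the CM type ensured by the absolute simplicity of $J$. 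I expect the main obstacle to be precisely this final descent step: verifying that the image is not only well defined modulo $n$ but also a genuine ring homomorphism, given that $\End(E\times A)$ and $\End(E^3)=\calM_{3\times 3}(\calR)$ sit inside $\calM_{3\times 3}(\calB)$ as different orders related by the isogeny $\Psi$ of degree $n$.
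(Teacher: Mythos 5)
Your proposal follows essentially the same route as the paper: obtain a REP solution via Propositions~\ref{prop: embeddingR} and~\ref{prop:ss}, fix an isogeny $E^2\to A$, pull back $\lambda_A$ to get the Hermitian polarization matrix with $n=\alpha\gamma-\beta\beta^\vee=\deg\psi$, and conjugate $\iota_0$ by $1\times\psi$ using the contragredient isogeny $\psi'$ with $\psi'\psi=[n]$ (the paper outsources the Hermitian-matrix form to \cite[Lemma~4.3]{KLLNOS} and the first-row integrality to \cite[Lemma~5-(1)]{KLS}, where you argue both directly, which is fine). The one correction: the ``main obstacle'' you identify in the final descent is based on a misreading of the notation --- in this paper $\mathcal{M}_{3\times 3}(\mathcal{R}/n)$ denotes matrices with entries in $\tfrac{1}{n}\mathcal{R}$ (see the discussion around \eqref{iota_of_eta}), not matrices over the quotient ring $\mathcal{R}/n\mathcal{R}$, so the target is simply the order $\tfrac{1}{n}(1\times\psi')\End(E\times A)(1\times\psi)\subset\calM_{3\times 3}(\calB)$ and no reduction modulo $n$ (which indeed would not be a ring map) is required; injectivity is then automatic since $\OO$ has no nontrivial two-sided ideals with torsion-free quotient killed by a ring homomorphism into a characteristic-zero algebra.
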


\begin{proof}
Suppose that $X$ has bad reduction modulo $\mathfrak{p} \subset \O_M$. Then, by Proposition \ref{prop: embeddingR} and Corollary \ref{badred-REP}, $(X, \mathfrak{p})$ has a solution to the REP. It is proven in Propositions \ref{prop: embeddingR} and \ref{prop:ss} that $A$ is isogenous to $E^2$ where $E$ is a supersingular elliptic curve over $\overline{\F}_p$. Let $s\colon E^2\rightarrow A$ be a fixed isogeny. Then by \cite[Lemma 4.3]{KLLNOS}, the polarization $\lambda$ induced by $\lambda_A$ on $E^2$ is as given in the statement, and for $n = \alpha\gamma - \beta\beta^\vee \in \Z_{>0}$, there is an isogeny $\tilde{s} \colon A \rightarrow E^2$ be an isogeny such that $\tilde{s}s = [n]$. Then, by sending $\varphi \in \End(E \times A)$ to $\frac{1}{n}(n\times \tilde{s}) \varphi (1 \times s) \in \End(E^3)\otimes \Q = \mathcal{M}_{3\times 3}(\mathcal{R}/n)$, we embed $\End(E \times A)$ into $ \mathcal{M}_{3\times 3}(\mathcal{R}/n)$. Hence, we obtain an embedding 
$\O \hookrightarrow \mathcal{M}_{3\times 3}(\mathcal{R}/n)$. 

It is straightforward to check the Rosati involution condition. The last condition follows from  \cite[Lemma 5-(1)]{KLS}. 
\end{proof}

\begin{remark}\label{E3} Because of Remark \ref{rmk:2cases}, if $p$ does not divide $n$ then $A\simeq E^2$.
\end{remark}

In Section \ref{sec: embedding_img} we will see how to fix a particular isogeny $s\colon E^2\rightarrow A$ starting by a solution to the REP.

Proposition~\ref{prop: embeddingI} is the motivation for introducing the following problem, whose resolution will be the main focus of this paper.

\begin{problem}\label{Prob:IEP}\textit{(The Isogenous Embedding Problem)} Given an order $\O$ in a sextic CM field~$K$ and a prime number $p$, the Isogenous Embedding Problem (IEP) is to find a supersingular elliptic curve $E$ over $\overline{\mathbb{F}}_p$ with $\End(E)\cong \mathcal{R}$, 
a polarization $\lambda=\begin{pmatrix}\alpha & \beta\\ \beta^\vee & \gamma\end{pmatrix}$ on $E^2$ where $\alpha, \gamma \in \Z_{>0}$, $\beta \in \calR$, $n:= \alpha\gamma - \beta\beta^\vee \in \Z_{>0}$, and a ring embedding 
\begin{equation*}
\iota : \O \rightarrow \mathcal{M}_{3\times 3}(\mathcal{R}/n)
\end{equation*}
such that the Rosati involution coming from the polarization $1\times \lambda$ induces complex conjugation on $\OO$  and the entries of the first row are in $\mathcal{R}$. We call the tuple $(E^3,1\times \lambda, \iota : \O \rightarrow \mathcal{M}_{3\times 3}(\mathcal{R}/n))$
a solution to the IEP for $(\mathcal{O},p)$.
\end{problem}

\begin{cor}\label{badred-IEP} With the notation and the assumptions of Proposition~\ref{prop: embeddingI}, a genus $3$ curve with CM by $\mathcal{O}\subset K$ and a prime $\mathfrak{p}\mid p$, with $p \neq 2$, of bad reduction produce a solution to the IEP. 
\end{cor}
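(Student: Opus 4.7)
The plan is to observe that Corollary \ref{badred-IEP} is essentially a translation of Proposition \ref{prop: embeddingI} into the language of Problem \ref{Prob:IEP}. The proposition, under exactly the same hypotheses (genus $3$ curve $X$ with CM by $\mathcal{O}\subset K$, $J=J(X)$ absolutely simple, $M/L$ finite such that $J$ has good reduction and $X$ has a stable model over $\mathcal{O}_M$, prime $\mathfrak{p}\mid p$ with $p\neq 2$ and $X$ of bad reduction mod $\mathfrak{p}$), produces precisely the data listed in the definition of the IEP. So the argument is simply to invoke Proposition \ref{prop: embeddingI} and check that each component of its conclusion matches the corresponding item in Problem \ref{Prob:IEP}.

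Concretely, I would proceed as follows. First, apply Proposition \ref{prop: embeddingI} to obtain a supersingular elliptic curve $E/\overline{\mathbb{F}}_p$ with $\End(E)\cong\mathcal{R}$, a polarization
\[
\lambda=\begin{pmatrix}\alpha & \beta\\ \beta^\vee & \gamma\end{pmatrix}
\]
on $E^2$ with $\alpha,\gamma\in\mathbb{Z}_{>0}$, $\beta\in\mathcal{R}$, and $n:=\alpha\gamma-\beta\beta^\vee\in\mathbb{Z}_{>0}$, together with a ring embedding $\iota\colon\mathcal{O}\hookrightarrow\mathcal{M}_{3\times 3}(\mathcal{R}/n)$ such that the Rosati involution coming from $1\times\lambda$ induces complex conjugation on $\mathcal{O}$ and the entries of the first row of $\iota(\mathcal{O})$ lie in $\mathcal{R}$. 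Second, compare this list of conditions with the definition of a solution to the IEP in Problem \ref{Prob:IEP}: the required supersingular elliptic curve, polarization shape, integer $n$, target matrix ring, Rosati condition, and first-row condition coincide verbatim. Hence the tuple $(E^3,\,1\times\lambda,\,\iota)$ is by definition a solution to the IEP for $(\mathcal{O},p)$.

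There is essentially no hard step, since the corollary is a matter of matching definitions. The only point worth highlighting is the provenance of the assumption $p\neq 2$: it enters via Proposition \ref{prop:ss}, which (together with the absolute simplicity of $J$) is what guarantees that the elliptic curve appearing in the reduction of $J$ is supersingular with endomorphism ring a maximal order $\mathcal{R}$ in $\mathcal{B}_{p,\infty}$, as required by the IEP. With this observation in place, the corollary follows immediately.
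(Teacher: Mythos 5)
Your proof is correct and matches the paper's treatment: the paper states this corollary without proof as an immediate consequence of Proposition~\ref{prop: embeddingI}, and your argument is exactly the intended definition-matching between that proposition's conclusion and Problem~\ref{Prob:IEP}. Your remark on where $p\neq 2$ enters (via Proposition~\ref{prop:ss} guaranteeing supersingularity) is also accurate.
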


We saw that \textit{bad reduction} implies the existence of a solution to the REP and that implies the existence of a solution to the IEP. In order to compute primes of bad reduction for a genus~3 curve with CM by a sextic order $\O$, we compute solutions to the IEP. This will provide a bound for the primes of  bad reduction.

\subsection{From a solution to REP to bad reduction}\label{sec:REPimpliesBadRed}

In genus $2$, a solution to the embedding problem produces a curve with bad reduction~\cite[Theorem 4.2.1]{GorenLauter07}. Concretely, suppose that there exists an embedding $\OO \hookrightarrow \End(E^2)\otimes\Q$, where $\OO$ is the ring of integers of a non-biquadratic quartic CM field $K$ and $E$ is defined over $\overline{\F}_p$, such that the Rosati involution coming from the product polarization induces complex conjugation on $\OO_K$. 
Recall that a CM type of a CM field $K$ of degree $4$ is primitive if and only if $K$ does not contain any imaginary quadratic field. Then, there exists a curve $X$ of genus 2 over a number field $L$ with CM by $\OO$, whose endomorphisms and the stable model are defined over~$\OO_L$, and a prime $\frakp \subset \OO_L$ such that $X$ has bad reduction modulo $\frakp$. In genus 3, bad reduction produces a solution to the REP by Proposition \ref{prop: embeddingR}. However, this strategy cannot be fully used in genus 3 to prove the converse.

\begin{proposition} Let $K$ be a sextic CM field not containing any imaginary quadratic field and $\mathcal{O}\subseteq K$ be an order.
 Let $(E\times A, 1\times \lambda_A, \iota_0: \O \rightarrow \End(E\times A))$ be a solution to the REP, then there exists a genus 3 curve with CM by $\mathcal{O}$ over a number field having bad reduction at a prime $\mathfrak{p}$ dividing $p$. 
\end{proposition}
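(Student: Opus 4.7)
The plan is to mirror the genus $2$ argument of \cite[Theorem~4.2.1]{GorenLauter07}, replacing the final appeal to the Torelli theorem in dimension~$2$ with the analogous statement in dimension~$3$. Starting from the REP solution, I would proceed in four steps: CM-lifting to characteristic zero, verifying primitivity of the resulting CM type, applying Torelli to realise the lift as a Jacobian, and finally reading off bad reduction from the decomposable special fibre.

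For the lifting step, I would use the Shimura--Taniyama construction to produce a principally polarised abelian threefold $(\widetilde{B},\widetilde{\lambda})$ over a number field $L$ with an embedding $\widetilde{\iota}\colon \mathcal{O}\hookrightarrow \End(\widetilde{B})$ whose Rosati involution is complex conjugation, and such that for some prime $\mathfrak{p}\mid p$ of $\O_L$ the triple $(\widetilde{B},\widetilde{\lambda},\widetilde{\iota})$ reduces to $(E\times A,\,1\times\lambda_A,\,\iota_0)$. Concretely, one chooses a CM type $\Phi$ on $K$ compatible with the action of $\iota_0(\mathcal{O})$ on the tangent space of $E\times A$ (via the Shimura--Taniyama formula for the reduction of a CM abelian variety), forms the analytic torus $\C^3/\Phi(\mathcal{O})$, and transports a Riemann form representing the class of $1\times\lambda_A$ under the isomorphism of polarised $\mathcal{O}$-modules; the general theory then descends this complex torus to a polarised abelian variety over a number field whose reduction at a suitable place above $p$ is the prescribed one. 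Since, by Proposition~\ref{prop:ss}, $E$ is supersingular (in the relevant case $p\neq 2$), this is the delicate step and is the main obstacle: in the supersingular case one may need to pass to an isogenous abelian variety to obtain a CM lift, and one must keep track of the polarisation throughout; the required input is the CM lifting theorem for polarised abelian varieties with sufficient endomorphisms, in the form developed by Chai--Conrad--Oort, combined with Honda--Tate theory to match the special fibre.

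Once the lift is in hand, the remaining steps are short. Since $K$ contains no imaginary quadratic subfield, every CM type of $K$ is primitive (this is the standard criterion, and is the reason for the hypothesis), so the lifted abelian variety $\widetilde{B}$ is absolutely simple. In particular $(\widetilde{B},\widetilde{\lambda})$ is indecomposable as a principally polarised abelian variety, so by Torelli's theorem in dimension $3$ there exists a smooth projective curve $X$ of genus $3$, defined over a finite extension $M/L$, such that $(\Jac(X),\lambda_X)\cong (\widetilde{B},\widetilde{\lambda})$; transporting $\widetilde{\iota}$ along this isomorphism endows $X$ with CM by $\mathcal{O}$. Finally, let $\mathfrak{q}\mid \mathfrak{p}$ be a prime of $\O_M$ at which $\Jac(X)$ has good reduction (such $\mathfrak{q}$ exists after possibly enlarging $M$, by Serre--Tate \cite{SerreTate}). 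If $X$ itself had good reduction at $\mathfrak{q}$, then the Jacobian of the smooth reduction $X_{\mathfrak{q}}$ would equal $(\Jac(X))_{\mathfrak{q}}\cong (E\times A,\,1\times\lambda_A)$; but the latter is decomposable as a principally polarised abelian variety, whereas the Jacobian of a smooth curve of genus $3$ is indecomposable, a contradiction. Therefore $X$ has bad reduction at a prime above $p$, completing the proof.
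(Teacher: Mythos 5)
Your outline coincides with the paper's: lift the REP solution to characteristic zero, use the absence of an imaginary quadratic subfield to conclude that every CM type of $K$ is primitive and hence that the lift is absolutely simple, deduce that the resulting indecomposable principally polarized threefold is a Jacobian, and conclude bad reduction from the decomposability of the special fibre. Your endgame is correct and in fact more explicit than the paper's one-line conclusion (the contradiction between the indecomposability of a Jacobian's canonical polarization and the product polarization on $E\times A$ is exactly the right argument); the only terminological quibble is that what you need is not Torelli's theorem (injectivity) but the surjectivity of the Torelli map onto indecomposable principally polarized abelian threefolds, up to a quadratic twist over a non-closed field, which is harmless here since you may enlarge the base field.

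The real divergence, and the genuine gap, is the lifting step, which you yourself flag as ``the main obstacle'' without closing it. The paper settles it with a single citation to \cite[Lemma~4.4.1]{GorenLauter07}: that lemma is a deformation-theoretic (Grothendieck--Messing / Serre--Tate) lifting of the \emph{given} triple $(E\times A,\,1\times\lambda_A,\,\iota_0)$ to characteristic zero together with the $\O$-action and the polarization, so the special fibre is the prescribed one by construction. Your proposed route runs in the opposite direction -- build the canonical Shimura--Taniyama model $\C^3/\Phi(\O)$ and then try to match its reduction to $(E\times A,\iota_0)$ -- and this is precisely the hard CM-lifting problem, with known obstructions. Worse, the tool you invoke to resolve it, the Chai--Conrad--Oort theorem, provides CM lifting only \emph{after an isogeny}, and an isogeny destroys exactly the datum your final step depends on: the special fibre must be decomposable \emph{as a principally polarized abelian variety}, and decomposability of the polarization is not an isogeny invariant. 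So as written the central step is open; replacing it by the direct deformation-theoretic lift of \cite[Lemma~4.4.1]{GorenLauter07} repairs the argument and recovers the paper's proof.
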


\begin{proof}
From a solution to the REP there exists (at least) a lift to characteristic zero by \cite[Lemma 4.4.1]{GorenLauter07}. By the Lefschetz principle, this lift can be realized over $\mathbb{C}$ and, by the theory of complex multiplication, it is actually defined over a number field. This produces a principally polarized absolutely simple abelian threefold and hence a Jacobian. The corresponding curve has bad reduction at $p$ and it produces the REP solution.
\end{proof}

When $K$ contains an imaginary quadratic field, it admits both primitive and non-primitive CM-types so we cannot conclude that the CM type of the lift is primitive. 
Even more, in the case that the CM type is non-primitive we cannot deduce that the polarization of the lift is indecomposable hence we cannot conclude that the lift is a Jacobian.

Another issue that deserves more study is the fact that the lift may not be unique, this is related to the generalization of Gross-Zagier formula on singular moduli measuring for which primes different CM curves have isomorphic reduction, see \cite{GrossZagier, LV15a}.

\section{The embedding of the sextic order}\label{sec: embedding_img}  

Let us consider $(E\times A, 1\times \lambda_A, \iota_0:\,\mathcal{O} \hookrightarrow \text{End}(E\times A))$ a solution to the REP.
In this section, we effectively construct a particular solution to the IEP, starting from the solution to the REP, by fixing a particular isogeny from $E^2$ to $A$ proposed in \cite[Sec. 3]{KLLNOS}. We fix elements $\mu,\eta\in\O$ such that $K=\Q(\eta)$, $K_+=\Q(\mu)$ and $\mu=-\eta^2$.

Recall that $E$ is supersingular and $A\sim E^2$. We denote by $\mathcal{R}\isom \End(E)$ a maximal order in the quaternion algebra $\mathcal{B}_{p,\infty}$. We write 
\begin{equation}\label{eq:xyzw}
  \iota_0(\mu)=:\xyzw,
\end{equation}
where $x\in \mathcal{R}$, $y\in\text{Hom}(A,E)$, $z\in\text{Hom}(E,A)$ and $w\in \text{End}(A)$. In Equation~\eqref{eq:xyzw} the sizes of the boxes reflect the dimensions of the (co)domains of the homomorphisms. 

The Rosati involution is defined by $f \mapsto f^* := \lambda_A^{-1} f^\vee \lambda_A \in \End(E\times A)\otimes \Q$ for $f\in \End(E\times A)$, which induces complex conjugation on $\OO$.
Since $\mu\in K_+$, i.e., $\overline{\mu} = \mu$,  we have 
$$
\iota_0(\mu) = (1 \times \lambda_A)^{-1} \circ \iota_0(\mu)^\vee \circ (1 \times \lambda_A)
$$
and this equality gives
\begin{equation}\label{eq:duals}
x = x^\vee,\, \lambda_A z = y^\vee  \text{ and }\lambda_A w = w^\vee\lambda_A.
\end{equation}

\begin{lemma}\label{isogeny}
The homomorphism
\begin{align*}
s_\mu = \homEEA{z}{\!\! wz \!\!} :E\times E &\longrightarrow A\\
(P,Q)&\longmapsto z(P) + wz(Q)
\end{align*} 
is an isogeny. 
\end{lemma}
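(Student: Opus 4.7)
My plan is to show that $s_\mu$ is surjective; since both the domain $E\times E$ and the codomain $A$ have dimension $2$, a surjective homomorphism between them is automatically an isogeny. I will argue by contradiction: assume the image $B := z(E) + wz(E)$ is a proper abelian subvariety of $A$, so $\dim B \leq 1$, and derive a contradiction from the fact that the cubic field $K_+ = \Q[\mu]$ cannot embed into $\End(X)\otimes\Q$ for any abelian variety $X$ of dimension $\leq 2$.

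\textbf{Case $z = 0$.} By \eqref{eq:duals}, $y^\vee = \lambda_A z = 0$, hence $y = 0$. Thus $\iota_0(\mu) = \bigl(\begin{smallmatrix} x & 0\\ 0 & w\end{smallmatrix}\bigr)$ is block-diagonal, so $E\times\{0\}\subset E\times A$ is stable under $\iota_0(\mu)$ and, since $K_+ = \Q[\mu]$, also under all of $\iota_0(K_+)$. Projection to the first factor yields a ring homomorphism $K_+\to \End(E)\otimes\Q$ sending $1$ to $\mathrm{id}_E$; being nonzero out of a field, it is injective. But $[K_+:\Q] = 3$ does not divide $2\dim E = 2$, a contradiction.

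\textbf{Case $z \neq 0$.} Since $z$ is a nonzero morphism from an elliptic curve, $E' := z(E)\subseteq A$ is an elliptic curve. As $E'\subseteq B$ and $\dim B\leq 1$, we must have $B = E'$; in particular $w(E') = wz(E)\subseteq E'$. A direct check shows $E\times E'\subset E\times A$ is stable under $\iota_0(\mu)$: for $P\in E$, $\iota_0(\mu)(P,0) = (x(P),z(P))\in E\times E'$; and for $R\in E'$, $\iota_0(\mu)(0,R) = (y(R),w(R))\in E\times E'$. Consequently $\iota_0(K_+)$ stabilizes $E\times E'$, yielding an embedding $K_+\hookrightarrow \End(E\times E')\otimes\Q$. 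But $[K_+:\Q] = 3$ does not divide $2\dim(E\times E') = 4$, again a contradiction.

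The key input is the standard fact that any commutative subfield $F$ of $\End(X)\otimes\Q$, for an abelian variety $X$ of dimension $g$, satisfies $[F:\Q]\mid 2g$ (after tensoring with $\Q_\ell$ for $\ell \neq p$, the Tate module becomes a faithful module over a product of fields and must decompose into free pieces). The only place requiring care is verifying that $\iota_0(\mu)$-stability of the candidate subvariety propagates to stability under all of $\iota_0(K_+)$, but this is immediate from $K_+ = \Q[\mu]$. No heavy computation with the explicit block matrix is needed beyond the relations in \eqref{eq:duals}.
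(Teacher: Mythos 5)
Your proof is correct and is essentially the argument the paper intends: the paper's own proof of Lemma~\ref{isogeny} is just the citation ``analogous to [KLLNOS, Lemma 3.1]'', and that argument likewise rules out a proper image by showing it would force (a subfield of) the CM field to act on an abelian variety of too small a dimension. The only point stated a bit loosely is your justification of $[F:\Q]\mid 2\dim X$ --- the Tate module need not obviously be \emph{free} over $F\otimes\Q_\ell$; one needs the rationality of characteristic polynomials of endomorphisms to see that all isotypic multiplicities agree --- but this is a standard fact and not a gap.
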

\begin{proof}
This is analogous to \cite[Lemma 3.1]{KLLNOS}. \end{proof}

By using the isogeny $s_\mu:\,E^2\rightarrow A$ constructed in Lemma \ref{isogeny}, we consider the isogeny
\begin{align*}
	F = 1 \times s_\mu = \homEEEtoEA{1}{0}{0}{0}{z}{\!\! wz \!\!}
    : E^3 &\longrightarrow E\times A\\
	(P,Q,R) &\longmapsto (P, s_\mu(Q,R)),
\end{align*}
 which induces the following embedding
\begin{align*} 
\iota_1 : \text{End}(E\times A)&\longrightarrow \text{End}(E^3)\otimes \mathbb{Q}\cong\calM_{3\times 3}(\mathcal{B})\\
\varphi&\longmapsto F^{-1} \varphi F,
\end{align*}
where $F^{-1}=\frac{1}{n}\tilde{F}$ and $\tilde{F}$ is the contragradient isogeny of $F$ such that $\tilde{F}F=[n]$. 
Hence, the tuple $(E^3, 1\times \lambda, \iota_1\circ \iota_0:\,\O\hookrightarrow \mathcal{M}_{3 \times 3}(\mathcal{R}/n))$, with $\lambda=s_\mu^{\vee}\circ \lambda_A\circ s_\mu$, is a solution to the IEP (as in the proof of Proposition~\ref{prop: embeddingI}).

More concretely,
\begin{equation}\label{eq:polarization}
\lambda := F^\vee 
\polmatrix F=\begin{pmatrix}1 & 0 & 0 \\ 0 & z^\vee\lambda_A z & z^\vee \lambda_A wz \\ 0 & z^\vee w^\vee \lambda_A z & z^\vee w^\vee \lambda_A wz \end{pmatrix}
=  \begin{pmatrix}1 & 0 & 0 \\ 0 & \alpha & \beta \\ 0 & \beta^\vee & \gamma \end{pmatrix}
\end{equation}
where \label{a_gamma_n}
$\alpha,\gamma\in\Z_{>0}$ and $\beta\in \calR$ such that $n := \alpha\gamma-\beta\beta^\vee\in\Z_{>0}$ and $[n]\ker(F) = 0$. 
Moreover,
\begin{equation}\label{eq:iotamu}
\iota(\mu)=F^{-1}\xyzw F=\begin{pmatrix}
 x & yz & ywz \\ 1 & 0 & c/n \\ 0 & 1 & d/n
 \end{pmatrix}=\begin{pmatrix}
 x & a & b \\ 1 & 0 & c/n \\ 0 & 1 & d/n
 \end{pmatrix} =: \Lambda_1,
\end{equation}
for some $x,a,b,c,d\in \calR$ and such that $w^2z=z\frac{c}{n}+wz\frac{d}{n}$. From \eqref{eq:duals}, \eqref{eq:polarization}, \eqref{eq:iotamu}, we deduce that $\alpha=a$ and $\beta=b$.

\subsection{The embedding of the totally real cubic order}  
\label{sec: embedding_cubic}

In this subsection, we look closer at the embedding of $\mathcal{O}_+$ by the solution to the IEP $\iota \colon \mathcal{O}\hookrightarrow\mathcal{M}_{3 \times 3}(\mathcal{R}/n)$ that we have previously constructed starting from a solution $\iota_0 \colon \O\hookrightarrow\End(E\times A)$ to the REP. The ideas in this section are again similar to the ones in \cite[Sec. 3]{KLLNOS} and \cite[Sec. 4]{KLS}.
\begin{prop}\label{prop:lambda1}
The coefficients of the matrix $\Lambda_1$ in \eqref{eq:iotamu} lie in $\frac{1}{n}\Z$. 
\end{prop}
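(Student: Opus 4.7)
The plan is to combine two ingredients. First, since $\mu\in K_+$ is totally real, $\iota$ sends $\mu$ to a Rosati-self-adjoint endomorphism, so $\Lambda_1^*=\Lambda_1$ for the Rosati involution $M^*=(1\times\lambda)^{-1}M^\vee(1\times\lambda)$; this will pin down the first row of $\Lambda_1$. Second, $\mu\in\mathcal{O}$ is an algebraic integer whose minimal polynomial $m(t)=t^3+a_2t^2+a_1t+a_0$ lies in $\Z[t]$, so the ring-homomorphism property gives $m(\Lambda_1)=\iota(m(\mu))=0$; evaluating this identity at the vector $e_1=(1,0,0)^\top$ will express $d/n$, $c/n$, and $b$ in terms of the data controlled by the first step.

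For step one, I would expand the equivalent relation $\Lambda_1^\vee(1\times\lambda)=(1\times\lambda)\Lambda_1$ and read off the $(1,1)$ and $(1,2)$ entries, obtaining $x^\vee=x$ and $a=\alpha$. Since the fixed points of the standard involution on $\calB$ form its center $\Q$, and $x\in\calR$, we get $x\in\calR\cap\Q=\Z$; and $a=\alpha\in\Z_{>0}$ by construction.

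For step two, a direct computation gives $\Lambda_1 e_1=(x,1,0)^\top$, $\Lambda_1^2 e_1=(x^2+a,\,x,\,1)^\top$, and $\Lambda_1^3 e_1=(x^3+xa+ax+b,\,x^2+a+c/n,\,x+d/n)^\top$. The bottom, middle, and top coordinates of $m(\Lambda_1)e_1=0$ then yield
\begin{align*}
d/n&=-x-a_2,\\
c/n&=-(x^2+a+a_2x+a_1),\\
b&=-\bigl(x^3+2xa+a_2(x^2+a)+a_1x+a_0\bigr),
\end{align*}
where in the last line I used $xa=ax$, valid because $x,a\in\Z$. The right-hand sides are integer polynomial expressions in $x,a,a_0,a_1,a_2\in\Z$, so $b,c/n,d/n\in\Z\subseteq\tfrac1n\Z$. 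Combined with $x,a\in\Z$ and the trivial entries $0,1$, this shows every entry of $\Lambda_1$ lies in $\tfrac1n\Z$.

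No essential obstacle arises; the main things to check carefully are the bookkeeping in the Rosati computation (duals intertwined with the $3\times3$ polarization) and the observation that once $x$ and $a$ are known to be central, the non-commutativity of $\calB$ plays no role in the remaining calculations.
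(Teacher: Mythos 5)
Your proof is correct, and for the harder half of the statement it takes a genuinely different route from the paper. The first step coincides with the paper's: both extract $x=x^\vee$ and $a=\alpha$ from the self-adjointness $\lambda\Lambda_1=\Lambda_1^\vee\lambda$, giving $x,a\in\Z$. From there the paper stays entirely inside the Rosati relation: it also reads off $b=\beta=\beta^\vee\in\Z$, then manipulates the remaining entries to get $a(c-c^\vee)=b(d^\vee-d)$ and $b(c-c^\vee)=\gamma(d^\vee-d)$, hence $n(c-c^\vee)(d^\vee-d)=0$ and so $c=c^\vee$, $d=d^\vee$ (a step that tacitly uses that $\calB_{p,\infty}$ is a division algebra and a short feedback into the linear relations). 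You instead invoke the integral minimal polynomial of $\mu$ and evaluate $m(\Lambda_1)e_1=0$; because the first column and the subdiagonal of $\Lambda_1$ are the known entries $x,1,0$, this computation needs no prior rationality of $b$, $c/n$, $d/n$ and immediately expresses them as integer polynomials in $x$, $a$, and the coefficients of $m$. This is exactly the content the paper postpones to equations \eqref{eq:A}--\eqref{eq:C} and Corollary \ref{Cor:ABC} (there derived from the characteristic polynomial, which does presuppose Proposition \ref{prop:lambda1}); your argument has no such circularity, proves the proposition and its corollary in one pass, and yields the stronger conclusion that all entries lie in $\Z$ rather than merely in $\frac{1}{n}\Z$. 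The only points worth double-checking are the ones you flagged: the bookkeeping in $\Lambda_1^\vee\lambda$ (which matches the paper's displayed identity) and the fact that $\calR\cap\Q=\Z$ since $\calR$ is an order.
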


\begin{proof}
Since the Rosati involution induces complex conjugation on $\OO$, for every $\theta\in K = \OO\otimes \Q$, we have 
\begin{equation}\label{eq: involution}
\iota(\overline{\theta}) = \lambda^{-1}\iota(\theta)^\vee\lambda.
\end{equation}
Taking $\theta=\mu\ (= \overline{\mu})$, we get $\lambda \iota(\mu)=\iota(\mu)^{\vee}\lambda$, that is,
$$
\left(\begin{array}{ccc}x & a & b\\ \alpha & \beta & \alpha c/n+\beta d/n\\ \beta^{\vee}& \gamma & \beta^{\vee}c/n+\gamma d/n\end{array}\right)=
\left(\begin{array}{ccc} x^{\vee} & \alpha & \beta \\ a^{\vee} &\beta^{\vee}&\gamma\\b^{\vee} & (c^{\vee}/n)\alpha+(d^{\vee}/n)\beta^{\vee}&(c^{\vee}/n)\beta+(d^{\vee}/n)\gamma\end{array}\right).
$$
Hence, we obtain 
\begin{align}
x^\vee & = x \in \Z, a = \alpha = a^\vee \in \Z_{>0}, b = \beta = \beta^\vee \in \Z,\label{relationsFromKLLNOS}\\
n &=\alpha \gamma - \beta^2 = a\gamma - b^2, \label{eq1:n} \\
\gamma &= \frac{ac + bd}{n}. \label{eq1:gamma}
\end{align}
We also derive $ a c^\vee + b d^\vee = n\gamma = a c + b d$ and $bc + \gamma d  = c^\vee b + \gamma d^\vee$. Thus, we obtain $a(c - c^\vee) = b (d^\vee - d)$ and $b(c - c^\vee) = \gamma (d^\vee - d)$. It follows that $b^2 (c - c^\vee) (d^\vee - d) = a\gamma (c - c^\vee) (d^\vee - d)$. Hence, we get $n (c - c^\vee) (d^\vee - d)  = 0.$ Since $n\neq 0$, we get $c = c^\vee$ and $d = d^\vee$, which leads to the conclusion that $x,a,b \in \Z $ and $c/n,d/n \in \frac{1}{n}\Z$. 
\end{proof}

Let $t^6 + At^4 + Bt^2 + C \in \Z[t]$ be the minimal polynomial of $\eta$. Recall that $\eta^2 = -\mu$. The minimal polynomial of $\mu$ is $t^3 - At^2 + Bt - C$ and $A,B,C\geq0$, since $\mu$ is totally positive. By computing the characteristic polynomial of $\iota(\mu)$, which is a matrix with rational coefficients, we obtain
\begin{align}
\Tr_{K_+/\Q}(\mu) &= A = \frac{d}{n} + x, \label{eq:A}\\
 B &= \frac{dx}{n} -\frac{c}{n} - a, \label{eq:B}\\
 \N_{K_+/\Q}(\mu) &= C = b - \frac{cx}{n} - \frac{ad}{n}. \label{eq:C}
\end{align}

\begin{cor}\label{Cor:ABC}
We have $n \mid d$ and $n \mid c$. In particular, we have $x,a,b,\frac{c}{n},\frac{d}{n} \in \Z$.
\end{cor}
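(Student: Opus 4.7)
The plan is to read off the two divisibilities directly from the trace and middle-coefficient relations \eqref{eq:A} and \eqref{eq:B}, using Proposition~\ref{prop:lambda1} as input. Since the characteristic polynomial of $\iota(\mu)$ must agree with the minimal polynomial of $\mu$ (because $\iota$ is a $\Q$-algebra embedding and $K_+=\Q(\mu)$ has degree $3$, so $\iota(\mu)$ satisfies the same minimal polynomial, whose characteristic polynomial on the $6$-dimensional representation is its square of a cube), the three equalities \eqref{eq:A}, \eqref{eq:B}, \eqref{eq:C} give integer values on the left-hand side ($A,B,C\in\Z$).

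First I would handle $n\mid d$: by Proposition~\ref{prop:lambda1} we know $x\in\Z$, and the trace relation \eqref{eq:A} reads $A=\tfrac{d}{n}+x$ with $A\in\Z$. Rearranging gives $\tfrac{d}{n}=A-x\in\Z$, so $n\mid d$.

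Next, for $n\mid c$: from \eqref{eq:B}, $B=\tfrac{dx}{n}-\tfrac{c}{n}-a$ with $B,a\in\Z$. Since $\tfrac{d}{n}\in\Z$ (just established) and $x\in\Z$, the term $\tfrac{dx}{n}=\left(\tfrac{d}{n}\right)x$ is an integer, so $\tfrac{c}{n}=\tfrac{dx}{n}-a-B\in\Z$, giving $n\mid c$. As a sanity check, one can verify \eqref{eq:C} is automatically consistent: $C=b-\tfrac{cx}{n}-\tfrac{ad}{n}$ now has all terms integral, matching $C\in\Z$. This yields the final assertion that $x,a,b,\tfrac{c}{n},\tfrac{d}{n}\in\Z$.

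There is essentially no obstacle: the entire content lies in having already promoted $x,a,b$ to rational integers in Proposition~\ref{prop:lambda1}, after which the characteristic polynomial equations \eqref{eq:A}--\eqref{eq:C} act as linear $\Z$-relations that force $c/n$ and $d/n$ to be integers one after the other.
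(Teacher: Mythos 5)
Your proof is correct and follows exactly the paper's argument: read $\tfrac{d}{n}=A-x\in\Z$ off \eqref{eq:A} using $x\in\Z$ from Proposition~\ref{prop:lambda1}, then read $\tfrac{c}{n}=\tfrac{dx}{n}-a-B\in\Z$ off \eqref{eq:B}. The extra remarks about the characteristic polynomial are just restating the derivation of \eqref{eq:A}--\eqref{eq:C}, which the paper already has in place before the corollary.
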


\begin{proof}
By \eqref{eq:A}, we have $\frac{d}{n} = A - x \in \Z$ and by \eqref{eq:B}, we get $\frac{c}{n}  = \frac{dx}{n} - a - B \in \Z$.
\end{proof}

In order to find all the solutions to the embedding problems for a given CM field $K$, we first determine all possible embeddings of $\mu$. In particular, we determine all possible values for $[x,a,b,c,d,n]$. Then, we check if these embeddings of $\Z[\mu]$, also {extend} to  embeddings of $\O_{+}$ into $\mathcal{M}_{3 \times 3}(\mathcal{R}/n)$.

\subsection{The embedding of $\eta$} \label{sec: embedding_eta}  
\label{Sec4-EmbeddingOfO_eta}

We now study the embedding of $\eta$. Let
\begin{eqnarray}\label{iota_of_eta}
\Lambda_2 := \iota(\eta) = \begin{pmatrix} x_1 & x_2 & x_3 \\ x_4 & x_5 & x_6 \\ x_7 & x_8 & x_9 \end{pmatrix}
\end{eqnarray}
where $x_1, x_2, x_3 \in\calR$ and $x_4, \ldots, x_9\in\frac{1}{n}\calR$. We denote  the reduced trace  and  norm on $\mathcal B = \calR \otimes \Q$ by  $\Tr$  and  $\N$.

\begin{prop}\label{prop: lambda2}
We have
$$
\Lambda_2 = \begin{pmatrix} 
x_1 & x_2 &  x_3\\ 
\frac{\gamma}{n}x_2 - \frac{b}{n}x_3 & x_1 + \left( k_1\frac{c}{n} - k_2\frac{d}{n} \right)x_2 + k_2x_3 & \left( k_2 \frac{c}{n} - \frac{b^2}{n} \right)x_2 + \left( \frac{ab}{n} + k_1\frac{c}{n} \right)x_3 \\ 
-\frac{b}{n}x_2 + \frac{a}{n}x_3 & k_2 x_2 + k_1x_3  & x_1 + \left(\frac{ab}{n} + k_1\frac{c}{n}  \right)x_2 - \left(\frac{a^2}{n} - k_1\frac{d}{n} - k_2 \right)x_3 \end{pmatrix},
$$
where $x_1, x_2, x_3 \in \calR$ with $\Tr(x_1) = \Tr(x_2) = \Tr(x_3) = 0$, $k_1 = \frac{ad}{n^2}-\frac{ax}{n}-\frac{b}{n}$, $k_2 = \frac{ac}{n^2} +\frac{bx}{n}$.
\end{prop}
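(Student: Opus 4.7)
The plan is to exploit two sets of identities between $\Lambda_1 = \iota(\mu)$ and $\Lambda_2 = \iota(\eta)$: the commutation $\Lambda_1\Lambda_2 = \Lambda_2\Lambda_1$, inherited from $\mu\eta = \eta\mu$ in $K$, and the Rosati condition $\lambda\Lambda_2 = -\Lambda_2^\vee\lambda$, inherited from $\overline{\eta} = -\eta$. A key simplification, coming from Proposition~\ref{prop:lambda1} and Corollary~\ref{Cor:ABC}, is that all entries of $\Lambda_1$ lie in $\frac{1}{n}\Z$ and are therefore central in $\mathcal B$; the commutator equations are then linear in the quaternionic entries $x_{ij}$ of $\Lambda_2$ with scalar coefficients.

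First I would write $\Lambda_2 = (x_{ij})$ with $x_{ij}\in \frac{1}{n}\mathcal R$ and spell out the nine entries of $\Lambda_1\Lambda_2 - \Lambda_2\Lambda_1 = 0$. The $(2,1)$, $(2,2)$, $(3,1)$, $(3,2)$ entries solve successively for $x_5, x_8, x_6, x_9$ in terms of $x_1, x_2, x_4, x_7$. The $(1,1)$ entry yields $x_2 = ax_4 + bx_7$, and the $(1,2)$ entry, after substituting the expression for $x_5$ and using $\gamma = (ac+bd)/n$ from \eqref{eq1:gamma}, collapses to $x_3 = bx_4 + \gamma x_7$. Since $\det\begin{pmatrix} a & b\\ b & \gamma\end{pmatrix} = a\gamma - b^2 = n \neq 0$, this $2\times 2$ system inverts to
\[
x_4 = \frac{\gamma x_2 - b x_3}{n},\qquad x_7 = \frac{-bx_2 + ax_3}{n},
\]
matching the first column of the proposition. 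Back-substituting into the formulas for $x_5, x_6, x_8, x_9$ and collecting the coefficients of $x_2$ and $x_3$ (repeatedly using $ac + bd = n\gamma$ and $a\gamma - b^2 = n$) produces exactly the expressions involving $k_1, k_2$ displayed in the statement.

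It remains to justify that $x_1, x_2, x_3$ have reduced trace zero. Here I would apply the Rosati identity $\lambda\Lambda_2 + \Lambda_2^\vee\lambda = 0$. The $(1,1)$ entry reads $x_1 + x_1^\vee = \Tr(x_1) = 0$. The $(2,1)$ and $(3,1)$ entries read $ax_4 + bx_7 + x_2^\vee = 0$ and $bx_4 + \gamma x_7 + x_3^\vee = 0$, respectively; combined with the commutation relations $x_2 = ax_4 + bx_7$ and $x_3 = bx_4 + \gamma x_7$ just derived, these force $\Tr(x_2) = \Tr(x_3) = 0$. The remaining cells of both the commutator and the Rosati equation, as well as $\Lambda_2^2 = -\Lambda_1$ coming from $\eta^2 = -\mu$, are then consistent with (and impose no further constraints on) the form above; this is what we want, since the proposition merely characterizes the shape that $\Lambda_2$ must take.

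The main obstacle is purely computational bookkeeping: carefully carrying out the substitutions and recognizing the identities $a\gamma - b^2 = n$ and $ac + bd = n\gamma$ that rearrange the back-substituted expressions into the compact form involving $k_1, k_2$. Conceptually, the argument is linear algebra over the quaternionic matrix ring, made genuinely linear (rather than noncommutative) by the centrality of the entries of $\Lambda_1$.
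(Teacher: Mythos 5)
Your proposal is correct and follows essentially the same route as the paper: both proofs combine the commutation relations $\Lambda_1\Lambda_2=\Lambda_2\Lambda_1$ (linear in the $x_{ij}$ because the entries of $\Lambda_1$ are rational) with the Rosati identity $\lambda\Lambda_2=-\Lambda_2^\vee\lambda$ to solve for $x_4,\dots,x_9$ in terms of $x_1,x_2,x_3$ and to extract $\Tr(x_i)=0$. The only cosmetic difference is that you obtain $x_3=bx_4+\gamma x_7$ from the commutator alone and invert the $2\times 2$ system using $a\gamma-b^2=n$, whereas the paper reaches the same first column by dualizing a linear combination of the Rosati relations; both are valid and lead to identical back-substitutions.
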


\begin{proof}
Throughout this proof we make heavy use of equations \eqref{eq1:n} and \eqref{eq1:gamma}. We start by noting that, since $\Tr_{K/\Q}(\eta) = 0$, we have
\begin{equation} \label{eq: Tr_eta}
\Tr(x_1) + \Tr(x_5) + \Tr(x_9) = 0.
\end{equation}

Secondly, using the involution equality (\ref{eq: involution}) and the fact that $\overline{\eta} = -\eta$, we obtain 
$$
- \begin{pmatrix}1 & 0 & 0 \\ 0 & a & b \\ 0 & b & \gamma \end{pmatrix} 
\begin{pmatrix} x_1 & x_2 & x_3 \\ x_4 & x_5 & x_6 \\ x_7 & x_8 & x_9 \end{pmatrix} 
= 
\begin{pmatrix} x_1^\vee & x_4^\vee & x_7^\vee \\ x_2^\vee & x_5^\vee & x_8^\vee \\ x_3^\vee & x_6^\vee & x_9^\vee \end{pmatrix} 
\begin{pmatrix}1 & 0 & 0 \\ 0 & a & b \\ 0 & b & \gamma \end{pmatrix}.
$$
Thus, $x_1^\vee  = -x_1$, which implies $\Tr(x_1) = 0$. Hence, by \eqref{eq: Tr_eta}, $\Tr(x_5) = - \Tr(x_9)$. From the matrix equality above, we also get
\begin{align}
x_2^\vee & = -ax_4 - bx_7, \label{x_2}\\
x_3^\vee & = -bx_4 - \gamma x_7, \label{x_3}\\
a\Tr(x_5) &= - b\Tr(x_8), \label{x_5x_8} \\
b\Tr(x_6) &= - \gamma \Tr(x_9), \nonumber \\
bx_5^\vee + \gamma x_8^\vee & = - ax_6 - bx_9.  \nonumber
\end{align}

Since $\eta$ and $\mu$  commute, we have $\Lambda_1\Lambda_2 = \Lambda_2 \Lambda_1$,  which leads to the following relations:
\begin{align} 
ax_4 + bx_7 &= x_2, \label{eqq:x2}\\ 
xx_2 +ax_5 + bx_8 &= ax_1 + x_3, \label{eqq:x3} \\
xx_3 + ax_6 + bx_9 &= bx_1 + (c/n)x_2 + (d/n)x_3, \nonumber \\
x_1 + (c/n)x_7 &= xx_4 + x_5,  \label{eqq:x1} \\ 
x_2 + (c/n)x_8 &=ax_4 + x_6, \nonumber \\ 
x_3 + (c/n)x_9 &=bx_4 + (c/n)x_5 + (d/n)x_6, \nonumber \\ 
x_4 + (d/n)x_7 &= xx_7 + x_8, \label{eqq:x8} \\
x_5 + (d/n)x_8 &=ax_7 + x_9,  \label{eqq:x9} \\
 x_6 + (d/n)x_9 &=bx_7 + (c/n)x_8 + (d/n)x_9. \label{eqq:x6}
\end{align}
Adding equations \eqref{x_2} and \eqref{eqq:x2}, we obtain $\Tr(x_2) = 0$. Furthermore, applying the trace operator to \eqref{eqq:x3} and using $\eqref{x_5x_8}$ together with the fact that $\Tr(x_1)=0$, we get
$\Tr(x_3)=0$. Multiplying \eqref{x_3} by $\frac{a}{n}$ and \eqref{x_2} by $\frac{-b}{n}$, adding the two equations together and taking the dual, we obtain $ - x_7^{\vee} = - \frac{b}{n} x_2 + \frac{a}{n}x_3$. Hence, $\Tr(x_7)=0$ and 
$ x_7 = - \frac{b}{n}x_2+ \frac{a}{n}x_3. $
Substituting this in \eqref{eqq:x2}, we obtain
$ x_4 = \frac{\gamma}{n}x_2- \frac{b}{n}x_3. $
Then, it follows from \eqref{eqq:x1} and \eqref{eqq:x8} that
$x_5 = x_1 + \left( k_1\frac{c}{n} - k_2\frac{d}{n} \right)x_2 + k_2x_3,$ and 
$x_8 = k_2 x_2 + k_1x_3,$
where $k_1 = \frac{ad}{n^2}-\frac{ax}{n}-\frac{b}{n}$, $k_2 = \frac{ac}{n^2} +\frac{bx}{n}$. Substituting these in  \eqref{eqq:x9} leads to
$x_9 =  x_1 + \left(\frac{ab}{n} + k_1\frac{c}{n}  \right)x_2 - \left(\frac{a^2}{n} - k_1\frac{d}{n} - k_2 \right)x_3. $
Finally, \eqref{eqq:x6} implies
$x_6 = \left( k_2 \frac{c}{n} - \frac{b^2}{n} \right)x_2 + \left( \frac{ab}{n} + k_1\frac{c}{n} \right)x_3.$ 
We remark that the remaining unnumbered equations are satisfied and do not yield new relations.
\end{proof}

We now relate the coefficients of $\Lambda_1$ and $\Lambda_2$.

\begin{prop} \label{prop: relations} We have the following system of equations: 
\begin{align*}
x = &  \N(x_1) + \frac{\gamma}{n}\N(x_2) + \frac{a}{n} \N(x_3) + \frac{b}{n}\Tr(x_2x_3), \\
a =&  \left( k_1\frac{c}{n} - k_2\frac{d}{n} \right)\N(x_2) + k_1 \N(x_3)  - k_2\Tr(x_2x_3) -\Tr(x_1x_2),\\ 
b =& \left( k_2\frac{c}{n} - \frac{b^2}{n}\right)\N(x_2) - \left(\frac{a^2}{n} - k_1\frac{d}{n} - k_2 \right)\N(x_3)    - \left(\frac{ab}{n} + k_1\frac{c}{n} \right) \Tr(x_2x_3) - \Tr(x_1x_3). 
\end{align*}
\end{prop}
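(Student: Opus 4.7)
The plan is to use the defining relation $\mu=-\eta^2$, which translates into the matrix identity $\Lambda_1=-\Lambda_2^2$. The three equations of the proposition are exactly what one obtains by equating the first row of $\Lambda_1$ (which is $(x,a,b)$ by \eqref{eq:iotamu}) with the first row of $-\Lambda_2^2$, after substituting the explicit expressions for $x_4,\dots,x_9$ given in Proposition~\ref{prop: lambda2}.

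Concretely, I would first compute the three entries $-(x_1 x_j^{(1)}+x_2 x_j^{(2)}+x_3 x_j^{(3)})$ of the first row of $-\Lambda_2^2$ for $j=1,2,3$, where $x_j^{(i)}$ denotes the $(i,j)$-entry of $\Lambda_2$. Substituting $x_4=\tfrac{\gamma}{n}x_2-\tfrac{b}{n}x_3$ and $x_7=-\tfrac{b}{n}x_2+\tfrac{a}{n}x_3$ into the $(1,1)$ entry, and the analogous formulas for $x_5,x_8$ (resp.\ $x_6,x_9$) into the $(1,2)$ entry (resp.\ $(1,3)$ entry), expresses each entry of $-\Lambda_2^2$ as a $\Q$-linear combination of $x_1^2$, $x_2^2$, $x_3^2$, $x_1x_2+x_2x_1$, $x_1x_3+x_3x_1$, and $x_2x_3+x_3x_2$.

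The key simplification, and the only non-trivial ingredient, is the quaternionic identity: since $\Tr(x_1)=\Tr(x_2)=\Tr(x_3)=0$ by Proposition~\ref{prop: lambda2}, we have $x_i^{\vee}=-x_i$ for $i=1,2,3$. Consequently $x_i^2=-x_ix_i^{\vee}=-\N(x_i)\in\Q$, and for $i\neq j$ one has $(x_ix_j)^{\vee}=x_j^{\vee}x_i^{\vee}=x_jx_i$, which gives $x_ix_j+x_jx_i=\Tr(x_ix_j)\in\Q$. Plugging these identities into the expansion above turns each entry of $-\Lambda_2^2$ into a rational expression in $\N(x_1),\N(x_2),\N(x_3)$, $\Tr(x_1x_2),\Tr(x_1x_3),\Tr(x_2x_3)$ and the coefficients $a,b,c/n,d/n,x,k_1,k_2,\gamma/n$.

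The remaining work is purely book-keeping: collecting coefficients one sees that the $(1,1)$ entry yields the first equation (with $\Tr(x_1x_2)$ and $\Tr(x_1x_3)$ cancelling out), while the $(1,2)$ and $(1,3)$ entries yield the second and third equations respectively. Nothing else from the system of matrix equations in Proposition~\ref{prop: lambda2} is needed, since those relations were already used to derive the form of $\Lambda_2$. I expect the main (mild) obstacle to be just keeping the sign conventions straight and making sure the non-commutativity of $\calR$ is handled correctly in the expansion of $\Lambda_2^2$; once the trace-zero identities are applied, the three asserted formulas fall out directly.
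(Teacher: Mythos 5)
Your proposal is correct and follows essentially the same route as the paper: both start from $\Lambda_2^2=-\Lambda_1$ and reduce the entries to rational expressions via the trace-zero identities $x_i^2=-\N(x_i)$ and $x_ix_j+x_jx_i=\Tr(x_ix_j)$. The only (harmless) differences are that the paper expands all nine entries and invokes $1-ak_2=bk_1$ to show six of them are redundant, whereas you observe that the first row alone already yields the three asserted equations; and your parenthetical about $\Tr(x_1x_2)$, $\Tr(x_1x_3)$ ``cancelling'' in the $(1,1)$ entry is slightly off (those cross terms never appear there, since $x_4$ and $x_7$ involve only $x_2,x_3$).
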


\begin{proof}
The relation $\eta^2 = -\mu$ implies that $\Lambda_2^2 = -\Lambda_1$. Thus, we obtain a system of nine equalities. Using the equality $1-ak_2 = bk_1$, one can show that these nine equalities reduce to the system of three equations above. 
\end{proof}

\begin{defn}\label{def:sol} 
Let $K$ be a sextic CM field and $\O$ be an order in $K$. Let $\eta, \mu\in \O$ such that $\eta$ is totally imaginary and $\eta^2 = -\mu$. Let $p$ a prime number and $E$ a supersingular elliptic curve defined over $\mathbb{F}_{p^2}$ and denote by $\calR$ a maximal order in a quaternion algebra such that $\End(E)\isom \calR$. Let $\lambda = \begin{pmatrix}a & b\\ b & \gamma\end{pmatrix}$ be a polarization on~$E^2$, where $a,b, \gamma \in \Z_{>0}$, and $n:= a\gamma - b^2 \in \Z_{>0}$.  Let $(E^3, 1\times \lambda, \iota: \Z[\eta]\rightarrow \mathcal{M}_{3 \times 3}(\calR/n))$ be a solution to the IEP  satisfying the following:
$\iota(\eta)=\Lambda_1$ and $\iota(\mu)=\Lambda_2$ as in equations \eqref{eq:iotamu} and \eqref{iota_of_eta} with
\begin{itemize}
\item  $x,a,b,c,d,n \in \Z $ with $n\mid c$ and $n\mid d$ satisfy \eqref{relationsFromKLLNOS}--\eqref{eq:C};
\item $x_1, x_2, x_3 \in \calR$ with $\Tr(x_i)=0$ satisfying the equations in Proposition \ref{prop: relations}.
\end{itemize}

We call this solution a \emph{\Int}~solution for $\Z[\eta]$. An IEP solution for $(\O,p)$ is called \emph{\Int}~if its restriction to $\Z[\eta]$ is $\Int$. A \Int~IEP solution for $(\O,p)$ is denoted by a tuple $(E,\eta,[x,d/n,a,c/n,b,\gamma,n,x_1,x_2,x_3])$.
\end{defn}

We check whether a \Int~solution for $\Z[\eta]$ induces a \Int~IEP solution for $(\O,p)$ using the arguments in Section~\ref{sec: lifting arguments}.

The following result can be read along the lines of the proof of Theorem~1.1 in \cite{KLLNOS}. We explicitly state the result and the proof for the sake of completeness.

\begin{prop} \label{prop: xi_non-comm} Let $K=\mathbb{Q}(\eta)$ be a sextic CM-field and  $(E,\eta,[x,d/n,a,c/n,b,\gamma,n,x_1,x_2,x_3])$ a \Int~solution. If $K$ does not contain an imaginary quadratic field, then $x_i, x_j$ do not commute for $i,j \in \{1,2,3\}$. If it does and $\mathbb{Z}[f\sqrt{D}]\subseteq\mathbb{Z}[\eta]$ then $p$ divides $2nfD$ or $x_i, x_j$ do not commute for $i,j \in \{1,2,3\}$. 
\end{prop}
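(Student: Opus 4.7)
The plan is to argue by contrapositive. Suppose $x_1, x_2, x_3$ pairwise commute. I would first rule out the case where all $x_i$ vanish: by Proposition \ref{prop: lambda2} this forces $\Lambda_2 = 0$, so $\iota(\eta) = 0$, contradicting injectivity. Given some $x_i \neq 0$, the three elements have reduced trace zero, pairwise commute, and live in $\mathcal{R}\subset \mathcal{B}_{p,\infty}$; since $\mathcal{B}_{p,\infty}$ is a division algebra (ramified at $\infty$), any commutative subring must sit inside a maximal commutative subfield, which is forced to be an imaginary quadratic field $k=\mathbb{Q}(\sqrt{D})\subset \mathcal{B}_{p,\infty}$. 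Inspecting the formulas of Proposition \ref{prop: lambda2}, every entry of $\Lambda_2$ is a $\mathbb{Q}$-linear combination of $x_1,x_2,x_3$, and the entries of $\Lambda_1$ are rational, so extending $\iota$ by $\mathbb{Q}$-linearity yields a $\mathbb{Q}$-algebra embedding $K=\mathbb{Q}(\eta)\hookrightarrow \mathcal{M}_{3\times 3}(k)$.

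The key step is then a dimension argument to force $k \subseteq K$. Consider the $k$-algebra homomorphism $K\otimes_{\mathbb{Q}} k \to \mathcal{M}_{3\times 3}(k)$ sending $\alpha\otimes \beta \mapsto \beta\cdot\iota(\alpha)$. If $k\not\subseteq K$, then $K$ and $k$ are linearly disjoint over $\mathbb{Q}$, so $K\otimes_{\mathbb{Q}} k$ is a field of dimension $12$ over $\mathbb{Q}$, i.e.\ dimension $6$ over $k$. Its injective image would be a commutative semisimple $k$-subalgebra of $\mathcal{M}_{3\times 3}(k)$ of $k$-dimension $6$, contradicting the classical bound that such a subalgebra has $k$-dimension at most $3$. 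Hence $k\subseteq K$, which establishes the first statement.

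For the second statement, assume further that $\mathbb{Z}[f\sqrt{D}]\subseteq\mathbb{Z}[\eta]$. Since $k\subseteq K$ we have $K\otimes_{\mathbb{Q}} k \cong K\oplus K$, so the map above factors through a $k$-algebra embedding $K\hookrightarrow \mathcal{M}_{3\times 3}(k)$. Under such an embedding, the image of $f\sqrt{D}\in k$ lies in the center $k\cdot I_3$ of $\mathcal{M}_{3\times 3}(k)$ and therefore equals $\pm f\sqrt{D}\cdot I_3$ (i.e.\ $k$ is embedded diagonally, in the terminology of the proof of Proposition~\ref{prop:ss}). The $(1,1)$-entry condition of the IEP then forces $f\sqrt{D}\in \mathcal{R}$, giving an embedding of the order $\mathbb{Z}[f\sqrt{D}]$ into the maximal order $\mathcal{R}$ of $\mathcal{B}_{p,\infty}$ that is compatible with the Rosati involution coming from $1\times \lambda$.

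The last step, from which the divisibility $p\mid 2nfD$ is to be extracted, is the main obstacle: the mere existence of such an embedding only forces $p$ to be non-split in $k$, so to get the stronger divisibility one needs to combine the local arithmetic of the quaternionic order $\mathcal{R}$ with the polarization data $(\alpha,\beta,\gamma,n)$. This is exactly the local analysis carried out in \cite[Proposition~5.8]{KLLNOS}, whose argument applies verbatim here (with the role played there by primitivity of the CM type replaced by the present hypothesis that $f\sqrt{D}\in \mathbb{Z}[\eta]$); the conclusion is $p\mid 2nfD$, completing the contrapositive.
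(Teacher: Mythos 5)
Your proof of the first statement is correct and follows the same route as the paper's: commuting trace-zero elements of the division algebra $\mathcal{B}_{p,\infty}$ lie in a common imaginary quadratic subfield $k$ (after ruling out $x_1=x_2=x_3=0$ via Proposition \ref{prop: lambda2} and injectivity of $\iota$), so $\iota$ extends $\Q$-linearly to an embedding $K\hookrightarrow \mathcal{M}_{3\times 3}(k)$, which forces $k\subseteq K$. The paper merely asserts this last implication; your linear-disjointness/dimension argument (a commutative semisimple $k$-subalgebra of $\mathcal{M}_{3\times 3}(k)$ has $k$-dimension at most $3$, while $K\otimes_\Q k$ would be a field of $k$-dimension $6$ if $k\not\subseteq K$) is a clean justification of it, so this part is if anything more complete than the printed proof.

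The gap is in the final step of the second statement. Your intermediate reduction is correct and worthwhile: when $k\subseteq K$, the map $K\otimes_\Q k\cong K\oplus K\to\mathcal{M}_{3\times 3}(k)$ must kill one factor, so $\iota(f\sqrt{D})=\pm f\sqrt{D}\cdot I_3$ is scalar (equivalently $d_2=d_3=0$). But to pass from this to $p\mid 2nfD$ you invoke \cite[Proposition 5.8]{KLLNOS} ``verbatim''. That result (restated here as Proposition \ref{prop: primitivity}) is proved under the hypothesis that the embedding arises from an actual CM curve with absolutely simple Jacobian and bad reduction at $\frakp$; its proof derives a contradiction with the primitivity of the CM type of a lift, and none of that data is available for an abstract \Int~solution, which is the setting of Proposition \ref{prop: xi_non-comm}. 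Your parenthetical claim that primitivity can be ``replaced by'' the hypothesis $f\sqrt{D}\in\Z[\eta]$ is not substantiated: that hypothesis only guarantees that $\iota(f\sqrt{D})$ is defined at the level of the order, and it plays no role analogous to primitivity. The paper sidesteps this by citing a different, purely matrix-algebraic argument from the proof of Theorem~1.1 of \cite{KLLNOS}: if the $x_i$ commute and $p\nmid 2nfD$, then $\iota(\eta)$ satisfies a polynomial of degree at most $2$ over $\mathcal{B}_0=\Q(x_1)$, which is incompatible with $\eta$ having degree $6$ over $\Q$ (hence degree $3$ over the quadratic subfield). So your argument is complete only up to the scalar-matrix reduction; the divisibility $p\mid 2nfD$ still requires a curve-free argument that you have not supplied.
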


\begin{proof}
Let $\iota$ denote the IEP embedding of $\O$ into $\mathcal{M}_{3\times 3}(\mathcal{R}/n)$. Since $\iota$ is a ring homomorphism, we have $h(\iota(\eta)) = 0$ where $h$ is the minimal polynomial of $\eta$ which has degree 6 over $\Q$. Therefore, if~$x_i, x_j$ commute for any $i,j \in \{1,2,3\}$, then $x_i$ lies in a field $\mathcal{B}_0 \subset \mathcal{R}\otimes \Q$ of degree~$2$ over $\Q$. This implies that $K$ contains an imaginary quadratic field isomorphic to~$\mathcal{B}_0$. So when $K$ does not contain an imaginary quadratic field we get a contradiction. 

If $K$ contains an imaginary quadratic field and  $p$ does not divide $2nfD$, then by the arguments in the proof of Theorem~1.1 on page 12 in \cite{KLLNOS}, we obtain that $\iota(\eta)$ is either a root of a linear or degree 2 polynomial over~$\mathcal{B}_0$. This contradicts with the degree of~$h$.
\end{proof}

\subsection{Sextic CM fields with imaginary quadratic subfields}\label{sec:iqf}  

In this section, we suppose that the CM field $K$ contains an imaginary quadratic field $k = \Q(\sqrt{D})$, where $D\in \Z_{<0}$ is square-free. We embed the cubic order $\OO_+$ and the quadratic order $\OO\cap \Q(\sqrt{D})$ instead of embedding the sextic order $\OO$. By this way we obtain better bounds for the primes of bad reduction (see Section~\ref{Ss:Bounds}) and a more effective algorithm (see Remark \ref{remark: imaginary}).

Let $f$ be the conductor of the order $\OO \cap k$ in $\OO_k$. Then, $\OO \cap k =  <1,f\omega>$, where $\omega=\frac{1+\sqrt{D}}{2}$ if $D \equiv 1 \mod 4$ and $\omega=\sqrt{D}$ if $D \equiv 2,\, 3 \mod 4$.  Thus, we can write $\mathcal{O}_K = \mathcal{O}_{K_+}[f\omega]$. 
Let $\iota:\O\hookrightarrow \mathcal{M}_{3\times 3}(\mathcal{R}/n)$ be a solution to the IEP coming from a solution the  REP via an isogeny $s_\mu$, as in Lemma \ref{isogeny}. Let
\begin{equation}\label{iota_of_fsqrtD}
\Lambda_3 := \iota(f\sqrt{D}) = \begin{pmatrix} d_1 & d_2 & d_3 \\ d_4 & d_5 & d_6 \\ d_7 & d_8 & d_9 \end{pmatrix} 
\end{equation}
where all $d_i \in \mathcal R/n$ and $d_1, d_2, d_3 \in \calR$. 
\begin{prop} \label{prop: lambda3}
We have
$$
\Lambda_3 = \begin{pmatrix} 
d_1 & d_2 &  d_3\\ 
\frac{\gamma}{n}d_2 - \frac{b}{n}d_3 & d_1 + \left( k_1\frac{c}{n} - k_2\frac{d}{n} \right)d_2 + k_2d_3 & \left( k_2 \frac{c}{n} - \frac{b^2}{n} \right)d_2 + \left( \frac{ab}{n} + k_1\frac{c}{n} \right)d_3 \\ 
-\frac{b}{n}d_2 + \frac{a}{n}d_3 & k_2 d_2 + k_1d_3  & d_1 + \left(\frac{ab}{n} + k_1\frac{c}{n}  \right)d_2 - \left(\frac{a^2}{n} - k_1\frac{d}{n} - k_2 \right)d_3 \end{pmatrix}
$$
where $d_1, d_2, d_3 \in \calR$ with $\Tr(d_1) = \Tr(d_2) = \Tr(d_3) = 0$, $k_1 = \frac{ad}{n^2}-\frac{ax}{n}-\frac{b}{n}$, $k_2 = \frac{ac}{n^2} +\frac{bx}{n}$.
\end{prop}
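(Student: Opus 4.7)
The plan is to observe that the derivation of the matrix form of $\Lambda_2 = \iota(\eta)$ in the preceding proposition used only two structural facts about $\eta$: that it is totally imaginary (so that complex conjugation sends it to its negative) and that it commutes with $\mu$. Both properties hold verbatim for $f\sqrt{D}$, since $\overline{f\sqrt{D}} = -f\sqrt{D}$ and $f\sqrt{D}$ lies in the commutative field $K$ containing $\mu$. Therefore the proof should proceed by exact analogy, with $x_i$ replaced by $d_i$ throughout.

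First, I would invoke the Rosati-induced involution equation \eqref{eq: involution} with $\theta = f\sqrt{D}$. Since $\overline{f\sqrt{D}} = -f\sqrt{D}$, this gives
$$-\lambda\,\Lambda_3 = \Lambda_3^\vee\,\lambda,$$
which upon entrywise comparison yields $d_1^\vee = -d_1$ (so $\Tr(d_1) = 0$), together with the linear constraints $d_2^\vee = -ad_4 - bd_7$, $d_3^\vee = -bd_4 - \gamma d_7$, and trace conditions on the lower block, exactly as in the $\Lambda_2$ case.

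Next, I would use that $f\sqrt{D}$ and $\mu$ commute in $K$ to obtain $\Lambda_1 \Lambda_3 = \Lambda_3 \Lambda_1$. Writing this out entry by entry produces the same nine equations as in the proof for $\Lambda_2$, with $x_i$ replaced by $d_i$. Combining with the involution relations, one applies precisely the same manipulations: adding the dual of the first row relation forces $\Tr(d_2) = 0$; then equation analogous to \eqref{eqq:x3} combined with the trace condition on the middle block forces $\Tr(d_3) = 0$; and $d_7$ is obtained by taking the dual of $\frac{a}{n}d_3^\vee - \frac{b}{n}d_2^\vee$. Substitution into the analogues of \eqref{eqq:x2}, \eqref{eqq:x1}, \eqref{eqq:x8}, \eqref{eqq:x9}, \eqref{eqq:x6} then successively solves for $d_4, d_5, d_8, d_9, d_6$ in terms of $d_1, d_2, d_3$, producing exactly the matrix displayed in the statement.

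There is essentially no genuine obstacle: all the reductions rely solely on the identities $n = a\gamma - b^2$ and $n\gamma = ac + bd$ established in the proof of Proposition~\ref{prop:lambda1}, together with $1 - ak_2 = bk_1$ used implicitly through the definitions of $k_1, k_2$. The consistency of the remaining (unlabeled) equations from the commutativity relation follows in the same way as for $\Lambda_2$, so no new computation beyond routine bookkeeping is required.
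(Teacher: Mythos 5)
Your proposal is correct and matches the paper's proof, which simply observes that $\Tr_{K/\Q}(f\sqrt{D})=0$, $\overline{f\sqrt{D}}=-f\sqrt{D}$, and $\Lambda_3\Lambda_1=\Lambda_1\Lambda_3$ are the only properties of $\eta$ used in Proposition~\ref{prop: lambda2}, so the argument carries over with $x_i$ replaced by $d_i$. You spell out the transferred steps in more detail than the paper does, but the route is identical.
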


\begin{proof}
Since $\Tr_{K/\Q}(f\sqrt{D}) = 0$, $\overline{f\sqrt{D}} = -f\sqrt{D}$ and $\Lambda_3\Lambda_1 = \Lambda_1\Lambda_3$, the proof is the same as that of Proposition \ref{prop: lambda2}.
\end{proof}

\begin{proposition}\label{prop: relations_d}
We have
\begin{align*}
-f^2D =& \N(d_1) + \frac{\gamma}{n}\N(d_2) + \frac{b}{n}\Tr(d_2d_3) + \frac{a}{n}N(d_3)\\
\Tr(d_1d_2) =&  \left(k_1\frac{c}{n} - k_2\frac{d}{n} \right)\N(d_2) + k_1 \N(d_3)  - k_2\Tr(d_2d_3), \\
\Tr(d_1d_3) =& \left( k_2\frac{c}{n} - \frac{b^2}{n}\right)\N(d_2) - \left(\frac{a^2}{n} - k_1\frac{d}{n} - k_2 \right)\N(d_3)    - \left(\frac{ab}{n} + k_1\frac{c}{n} \right) \Tr(d_2d_3).
\end{align*}
\end{proposition}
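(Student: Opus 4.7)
My plan is to mimic the proof of Proposition \ref{prop: relations} essentially verbatim. The crucial observation is that $(f\sqrt{D})^2 = f^2 D$ is a rational scalar (negative, since $D<0$), so in $\mathcal{M}_{3\times 3}(\mathcal{R}/n)$ we have the matrix identity $\Lambda_3^2 = f^2 D \cdot I_3$, where $I_3$ denotes the $3\times 3$ identity matrix. This replaces the relation $\Lambda_2^2 = -\Lambda_1$ that drove the earlier proposition.

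First I substitute the explicit form of $\Lambda_3$ from Proposition \ref{prop: lambda3} into $\Lambda_3^2$ and read off the three entries of its first row, which must equal $f^2 D$, $0$, and $0$, respectively. The key algebraic simplification is then to exploit the trace conditions $\Tr(d_1) = \Tr(d_2) = \Tr(d_3) = 0$: these yield $\overline{d_i} = -d_i$, hence $d_i^2 = -\N(d_i)$ and $d_i d_j + d_j d_i = \Tr(d_i d_j)$ for $i \ne j$. Expanding the $(1,1)$ entry and applying these identities produces
\[
-\N(d_1) - \tfrac{\gamma}{n}\N(d_2) - \tfrac{b}{n}\Tr(d_2 d_3) - \tfrac{a}{n}\N(d_3) = f^2 D,
\]
which rearranges to the first stated equation. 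For the $(1,2)$ and $(1,3)$ entries I set the expanded expressions equal to $0$, using the identity $a k_2 + b k_1 = 1$ (a direct consequence of \eqref{eq1:n} and \eqref{eq1:gamma}) to collapse coefficients; this produces the second and third equations.

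The remaining six entries of the matrix identity $\Lambda_3^2 = f^2 D \cdot I_3$ should be redundant, for the same structural reason as in Proposition \ref{prop: relations}: the shape of $\Lambda_3$ is already pinned down by the commutation constraint $\Lambda_3 \Lambda_1 = \Lambda_1 \Lambda_3$ used to derive Proposition \ref{prop: lambda3}, so the only independent content of the scalar-squaring relation is carried by its first row. I do not anticipate any genuine obstacle; the proof is bookkeeping-heavy but structurally identical to that of Proposition \ref{prop: relations}, with the substitution of $-x$ by $f^2 D$ on the right-hand side of the $(1,1)$ equation and of $-a, -b$ by $0, 0$ on the right-hand sides of the $(1,2)$ and $(1,3)$ equations. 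This last observation explains why the left-hand sides of the latter two relations are now the traces $\Tr(d_1 d_2)$ and $\Tr(d_1 d_3)$ rather than the algebraic integers $a$ and $b$ that appeared in Proposition \ref{prop: relations}.
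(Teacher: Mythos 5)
Your proposal is correct and follows exactly the paper's own route: the paper proves this proposition by noting $\Lambda_3^2 = f^2D\,\I_3$ and reducing the resulting nine entry-equalities to the three stated ones via the trace-zero identities and the relation $1 = ak_2 + bk_1$, precisely as you do. Your closing remark about why the left-hand sides become $\Tr(d_1d_2)$ and $\Tr(d_1d_3)$ (the right-hand side of the matrix identity being scalar rather than $-\Lambda_1$) is a correct and helpful gloss on the comparison with Proposition \ref{prop: relations}.
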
 

\begin{proof}
Similarly to Proposition \ref{prop: relations}, we have $\Lambda_3^2 = f^2D\I_3$ which leads us to the system of three equations above. 
\end{proof}

\begin{remark} \label{remark: sol_imag} In the situation that $K$ contains an imaginary quadratic field, and with the notation above a \Int~solution as in Definition \ref{def:sol} is determined by 
$\iota(\mu)=\Lambda_2$ and  $\iota(f\sqrt{D})=\Lambda_3$. We then describe the solution by a tuple $(E,\mu, f\sqrt{D},[x,d/n,a,c/n,b,\gamma,n,d_1,d_2,d_3])$ with $x,a,b,c,d,n \in \Z $ with $n\mid c$ and $n\mid d$ satisfying \eqref{relationsFromKLLNOS}--\eqref{eq:C} and $d_1, d_2, d_3 \in \calR$ with $\Tr(d_i)=0$ satisfying the equations in Proposition~\ref{prop: relations_d} and such that $\iota(f\omega)\in\mathcal{M}_{3\times3}(\mathcal{R}/n)$. By abuse of notation we also call a solution given by such a tuple a \Int~solution. 
\end{remark}

We check whether a \Int~solution for $\Z[\mu, f\sqrt{D}]$ induces a \Int~solution for $(\OO, p)$ using the arguments in Section \ref{sec: lifting arguments}. 

\begin{remark}
As in Proposition~\ref{prop: xi_non-comm}, if $(E,\mu, f\sqrt{D},[x,d/n,a,c/n,b,\gamma,n,d_1,d_2,d_3])$ is a \Int\, solution and $p\nmid 2fD$, then $d_i, d_j$ do not commute for $i,j \in \{1,2,3\}$. 
\end{remark}

The following lemma follows from \cite[Lemma 5.1 and Proposition 5.8]{KLLNOS} and we use the result in Section~\ref{sec: bounds_for_xi} to get better bounds for the primes of bad reduction when $K$ contains an imaginary quadratic field. 

\begin{proposition}\label{prop: primitivity} Let $\O$ be an order in a sextic CM field containing an imaginary field $\Q(\sqrt{D})$, where $D\in \Z_{<0}$ is square-free. Let $X$ be a curve such that the Jacobian is absolutely simple and has CM by $\O$. Let $\frakp \subset \O$ (lying above rational prime $p$) be a prime of bad reduction for $X$. 

Suppose that $(E,\mu, f\sqrt{D},[x,d/n,a,c/n,b,\gamma,n,d_1,d_2,d_3])$ is a \Int~IEP solution corresponding to $(\OO, p)$. If $p \nmid 2fDn$ then $d_2$ and $d_3$ are not both $0$ in $\Lambda_3 = \iota(f\sqrt{D})$.

\qed
\end{proposition}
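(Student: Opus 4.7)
The plan is to argue by contradiction. Suppose that $d_2=d_3=0$ in the expression for $\Lambda_3=\iota(f\sqrt{D})$ given by Proposition~\ref{prop: lambda3}. Then every off-diagonal entry of $\Lambda_3$ vanishes and every diagonal entry reduces to $d_1$, so $\Lambda_3 = d_1\,\I_3$. Substituting $d_2=d_3=0$ into the first equation of Proposition~\ref{prop: relations_d} gives $\N(d_1)=-f^2D$. Combined with $\Tr(d_1)=0$, this says that $d_1\in\calR$ satisfies $d_1^2 = f^2D$, so that $\Z[d_1]\subset\calR$ is isomorphic to the imaginary quadratic order $\Z[f\sqrt{D}]\subset \O\cap k$.

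Consequently the restriction of $\iota$ to the subfield $k=\Q(\sqrt{D})\subset K$ is the scalar ``diagonal'' embedding $f\sqrt{D}\mapsto d_1\I_3$ into $\mathcal{M}_{3\times 3}(\calR/n)$. Translating back through the isogeny $F = 1\times s_\mu\colon E^3\to E\times A$ used in Section~\ref{sec: embedding_img} to produce our IEP solution, this means that the action of $\Z[f\sqrt{D}]$ on $J_\mathfrak{p}\sim E^3$ is component-wise, by the single quaternion $d_1\in\calR=\End(E)$ on each factor.

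The next step is to invoke \cite[Lemma 5.1 and Proposition 5.8]{KLLNOS}. Lemma~5.1 of \emph{loc.\ cit.}\ allows us to conclude, using $p\nmid n$, that the diagonal scalar action recovered modulo $n$ genuinely lifts to a diagonal action of $k$ on the characteristic-zero Jacobian $J$. Proposition~5.8 of \emph{loc.\ cit.}\ then shows that such a diagonal action of the imaginary quadratic subfield on the reduction $E^3$, together with the hypothesis $p\nmid 2fD$, forces the Frobenius $\pi_E$ of $E$ to lie in a conjugate of $\Z[f\sqrt{D}]$ inside $\calR$; writing $\pi_E = a+bf\omega$ and using the norm equation $\N(\pi_E)=p^r$, the assumption $p\nmid 2fD$ then forces $\Tr(\pi_E)\ne 0\pmod p$, so that $E$ is ordinary. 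This contradicts Proposition~\ref{prop:ss}, which asserts that $E$ is supersingular. Equivalently, one obtains that the CM type $\Phi$ of $J$ is not primitive, contradicting the absolute simplicity of $J$ assumed in the hypothesis.

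\textbf{Main obstacle.} The only subtle point is the translation between the matrix statement ``$\Lambda_3$ is a scalar modulo $n$'' and the geometric statement ``$k$ acts as a scalar on $E^3$ up to genuine (not just $n$-)isogeny.'' The divisibility hypothesis $p\nmid n$ is exactly what is needed to carry out this lift: once the scalar action lifts cleanly, the primitivity/$p$-rank argument of \cite{KLLNOS} closes the contradiction, using $p\nmid 2fD$ to exclude the residual cases where the Frobenius could still fail to lie in $\Z[f\sqrt{D}]$.
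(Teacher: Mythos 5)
Your argument is correct and is essentially the paper's own: the paper proves this proposition purely by reduction to the case $\Lambda_3=d_1\I_3$ (a scalar embedding of the imaginary quadratic subfield) followed by a citation of \cite[Lemma 5.1 and Proposition 5.8]{KLLNOS}, exactly as you do, with the contradiction coming from the non-primitivity of the CM type (your ``equivalently'' clause), which conflicts with the absolute simplicity of $J$. Your gloss that Proposition~5.8 works by forcing $\pi_E$ to be ordinary is not quite how the cited argument runs, but since you correctly identify the primitivity contradiction as the operative conclusion, this does not affect the proof.
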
 

\subsubsection{Extra automorphisms}\label{Subsec:extraaut}
In this section, we assume that there exists a root of unity different from $\pm1$ in $\O$, or alternatively that the Jacobian of the curve has extra automorphisms beyond multiplication by $-1$.
If $\mathcal{O}$ is a sextic order containing a primitive root of unity $\zeta_n$ for $n \geq 3$, then, a quick computation shows that $n \in \{3,4,6,7,9,14,18\}$. If $X$ is a curve of genus $3$ and $\End(J) \cong \mathcal{O}$ with $\zeta_3 \in \mathcal{O}$, then $X$ is a Picard curve, see~\cite{KLS}. 
(In the notation of the previous section, in this case $D = -3$.)
Therefore, if the Jacobian of a curve $X$ admits an automorphism of order 3, 6, 9, or 18, then $X$ is a Picard curve.
In addition, if $\mathcal{O}$ contains $\zeta_{14}$, then it also contains $\zeta_7 = \zeta_{14}^2$.
In that case, $\mathcal{O}$ must be the ring of integers of $K = \mathbb{Q}(\zeta_7)$, and the unique curve with absolutely simple Jacobian whose endomorphism ring is isomorphism to $\mathcal{O}$ is the hyperelliptic curve $y^2=x^7-1$, up to isomorphism.
(In the notation of the previous section, in this case $D = -7$.)
Finally, under the condition that its Jacobian is absolutely simple, a curve $X$ whose Jacobian admits an automorphism of order 4 is hyperelliptic \cite{Weng}.
(In the notation of the previous section, in this case $D = -1$.)

\begin{proposition} \label{prop: lambda3_autom} If $\O$ contains the primitive root of unity $\zeta_4=\sqrt{-1}$ or $\zeta_7$ and $\iota$ comes from a solution to the REP coming from bad reduction for $\mathfrak{p}\subset \O$, then we have
$$
 \iota(\zeta)=\begin{pmatrix}
 * & 0 & 0\\ 0 & * & * \\0 & * & *
 \end{pmatrix}\in\operatorname{End}(J_\mathfrak{p}),
 $$
 for $\zeta= \zeta_4$ or $\zeta= \zeta_7$.
\end{proposition}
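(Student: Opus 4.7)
The plan is to use the action on the stable reduction of the curve automorphism corresponding to $\zeta$. Since $\zeta\overline{\zeta} = 1$, $\iota_0(\zeta)$ preserves the canonical principal polarization on $J$, so by Torelli's theorem there exists $\alpha \in \Aut(X)$ with $\alpha_* = \pm \iota_0(\zeta)$; the order of $\alpha$ is then $4$ if $\zeta = \zeta_4$, or $7$ or $14$ if $\zeta = \zeta_7$. Since the stable model of $X$ over $\OO_M$ is unique, $\alpha$ extends to an automorphism of this model, and reducing modulo $\mathfrak{p}$ yields an automorphism $\alpha_\mathfrak{p}$ of the special fiber $X_\mathfrak{p}$, inducing a permutation $\sigma$ of its irreducible components.

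Next I would restrict $\sigma$ using Proposition~\ref{reductionResult}. In case~(\ref{case:notprodell}), the two components of $X_\mathfrak{p}$ have distinct genera ($1$ and $2$), so $\sigma$ is trivial. In case~(\ref{case:prodell}), $\sigma$ lies in $S_3$ and must have order dividing both $|\alpha|$ and $6$. Since $\gcd(4,6) = \gcd(14,6) = 2$ and $\gcd(7,6) = 1$, the order of $\sigma$ is $1$ or $2$, so $\sigma$ is trivial or a transposition. In every case, at least one genus-$1$ component of $X_\mathfrak{p}$ is fixed by $\alpha_\mathfrak{p}$.

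The construction in Proposition~\ref{prop: embeddingR} leaves us free to choose which genus-$1$ factor of $J_\mathfrak{p}$ is declared to be $E$. I take $E$ to be the Jacobian of a genus-$1$ component fixed by $\alpha_\mathfrak{p}$, and let $A$ be the principally polarized product of the Jacobians of the remaining components. With this choice, $\iota_0(\zeta) = \pm (\alpha_\mathfrak{p})_*$ preserves both $E$ and $A$ setwise, so as an element of $\End(E \times A)$ it takes the block-diagonal form
$$
\iota_0(\zeta) = \begin{pmatrix} x' & 0 \\ 0 & w' \end{pmatrix}
$$
with $x' \in \End(E)$ and $w' \in \End(A)$.

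Finally, recall from Section~\ref{sec: embedding_img} that the IEP embedding is $\iota = F^{-1}\iota_0 F$, where $F = 1 \times s_\mu : E^3 \to E \times A$ is itself block diagonal of the same $(1,2)$-shape. Conjugation by $F$ preserves this block pattern, producing $\iota(\zeta)$ of the required form. The main subtlety to handle is the three-component case with $\zeta = \zeta_4$, where $\sigma$ can be a nontrivial transposition; this is precisely where the flexibility to choose $E$ as an $\alpha_\mathfrak{p}$-fixed component matters, using the fact that any transposition in $S_3$ has exactly one fixed point.
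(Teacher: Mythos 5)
Your proposal is correct and follows essentially the same route as the paper: use Torelli to realize $\pm\iota_0(\zeta)$ as a curve automorphism, extend it to the stable model, observe that the induced permutation of the components of $X_\mathfrak{p}$ cannot have order $3$ (since $3 \nmid 4, 7, 14$), and hence fix a genus-$1$ component to obtain the block-diagonal shape. Your explicit gcd bookkeeping and the remark that one may relabel which fixed component plays the role of $E$ just make precise the paper's phrase ``up to permutation.''
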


\begin{proof}
Let $X$ be a genus $3$ curve with CM by $\O$ an order of a sextic CM field $K$ with a root of unity different from $\pm 1$. Let $\mathcal{X}$ be the stable model of $X$
 and $\mathcal{J}$ the stable (N\'eron) model of its Jacobian. Let $\zeta\in\O$ be a root of unity. Then, Torelli's Theorem implies that there exists an automorphism of the curve $\psi$ that induces $\zeta$ or $-\zeta$ in the Jacobian. There in an injection of $\operatorname{End}(\mathcal{J}_0)\hookrightarrow\operatorname{End}(\mathcal{J}_{\mathfrak{p}})$, so we can see again $\zeta$ as an endomorphism of $\mathcal{J}_{\mathfrak{p}}$, and because it respects the polarization, we can see $\psi$ as an isomorphism of $\mathcal{X}_{\mathfrak{p}}$. 
 
If the positive genus components of $\mathcal{X}_{\mathfrak{p}}$ are $X_1$ and $X_2$, with $g(X_i)=i$, so $\mathcal{J}_{\mathfrak{p}}=J(X_1)\times J(X_2)$, then $\psi(X_1)=X_1$, $\psi(X_2)=X_2$ and hence $\zeta(J(X_1))=J(X_1)$ and $\zeta(J(X_2))=J(X_2)$. 
This implies 

$$
 \zeta=\begin{pmatrix}
 * & 0 & 0\\ 0 & * & * \\0 & * & *
 \end{pmatrix}\in\operatorname{End}(J(X_1)\times J(X_2)).
 $$

 Now, if the positive genus components of $\mathcal{X}_{\mathfrak{p}}$ are $E_1,\,E_2$ and $E_3$, then $\psi$ permute these three elliptic curves. If $D$ is $-1$ or $-7$, then the root of unity has order not dividing $3$ which means that $\Aut(C)$ is either fixing $E_1, E_2$ and $E_3$ or fixing one of them and swapping the other two. Therefore, we may assume that
 $$
 \zeta=\begin{pmatrix}
 * & 0 & 0\\ 0 & * & * \\0 & * & *
 \end{pmatrix}\in\operatorname{End}(E_1\times E_2\times E_3),
 $$
up to permutation.
\end{proof}
 
\begin{corollary} \label{cor: lambda3} Suppose that 
$(E,\mu, \sqrt{D'},[x,d/n,a,c/n,b,\gamma,n,d_1,d_2,d_3])$ is a \Int~IEP solution for $(\O, p)$ coming from a bad reduction. If $\O$ contains $i$ (respectively, $\zeta_7$), i.e. $D'=-1$ (respectively, $D'=-7$)  then
we have
$d_2=d_3=0$ and $p \mid 2n$ (respectively, $p \mid 14n$).

\end{corollary}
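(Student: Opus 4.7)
The plan is to combine the block structure of $\iota(\zeta)$ from Proposition~\ref{prop: lambda3_autom} with the primitivity criterion of Proposition~\ref{prop: primitivity}. The case $D'=-1$ is essentially immediate: because $\O$ contains $\zeta_4=\sqrt{-1}$, the intersection $\O\cap\Q(\sqrt{-1})$ coincides with the maximal order $\Z[\sqrt{-1}]$, so $f=1$ and $\Lambda_3=\iota(\sqrt{-1})=\iota(\zeta_4)$; Proposition~\ref{prop: lambda3_autom} then forces the $(1,2)$ and $(1,3)$ entries of $\Lambda_3$ to vanish, giving $d_2=d_3=0$.

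The substantive work is the case $D'=-7$, where Proposition~\ref{prop: lambda3_autom} applies to $\zeta_7$ rather than to $\sqrt{-7}$ directly, so one must relate the two. The key input is the classical quadratic Gauss sum for $p=7$:
\[\sqrt{-7}\;=\;1+2\bigl(\zeta_7+\zeta_7^2+\zeta_7^4\bigr)\in\Z[\zeta_7].\]
This has two consequences. First, $(1+\sqrt{-7})/2=1+\zeta_7+\zeta_7^2+\zeta_7^4\in\O$, so $\O\cap\Q(\sqrt{-7})$ is the maximal order of $\Q(\sqrt{-7})$ and $f=1$. Second, since $\iota$ is a ring homomorphism $\iota(\zeta_7^k)=\iota(\zeta_7)^k$, and the block shape established in Proposition~\ref{prop: lambda3_autom} (zero entries in positions $(1,2),(1,3),(2,1),(3,1)$) is closed under addition, integer scaling, and matrix multiplication by a short direct check on the relevant entries. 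Therefore
\[\iota(\sqrt{-7})\;=\;I_3+2\bigl(\iota(\zeta_7)+\iota(\zeta_7)^2+\iota(\zeta_7)^4\bigr)\]
has this block shape, and in particular $d_2=d_3=0$.

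The divisibility assertion is then the contrapositive of Proposition~\ref{prop: primitivity}: vanishing of both $d_2$ and $d_3$ forces $p\mid 2fDn$, which for $f=1$ and $D\in\{-1,-7\}$ becomes $p\mid 2n$ and $p\mid 14n$ respectively. The only nontrivial input beyond the two quoted propositions is the Gauss-sum identity in the $\zeta_7$ case; the rest is bookkeeping.
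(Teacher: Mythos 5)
Your proof is correct and follows the same route as the paper: Proposition~\ref{prop: lambda3_autom} gives the block shape, and the divisibility is the contrapositive of Proposition~\ref{prop: primitivity} with $f=1$. The only difference is that you make explicit, via the Gauss-sum identity $\sqrt{-7}=1+2(\zeta_7+\zeta_7^2+\zeta_7^4)$ and closure of the block shape under ring operations, the step passing from $\iota(\zeta_7)$ to $\iota(\sqrt{-7})$, which the paper leaves implicit; this is a worthwhile clarification rather than a different argument.
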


\begin{proof} Propositions \ref{prop: lambda3} and \ref{prop: lambda3_autom} give
$d_2=d_3=0$. 
The second part follows from ~Proposition~\ref{prop: primitivity}.
\end{proof}

\subsection{From a solution to the IEP to a solution to the REP} \label{sec: IEPtoREP}
Starting from a solution to the IEP, we can try to get a solution to the REP by computing an isogeny $s:\,E^2\rightarrow A$ of degree $n$. With notation as in the previous section and by defining $\iota_0=\frac{1}{n}\begin{pmatrix}1 & 0\\ 0 & s\end{pmatrix}\iota\begin{pmatrix}n & 0\\ 0 & \tilde{s}\end{pmatrix}$, we need to check that the image of $\iota_0$ belongs to $\operatorname{End}(E\times A)$ and not only in $\operatorname{End}(E\times A)/n$. In order to obtain an $s=\begin{pmatrix}z & wz\end{pmatrix}$, we need $\lambda=s^\vee\lambda_As$ to factor through $s$, $a=yz$ and $b=ywz$, see below for a precise example.

We do not know if it is always possible to get a solution to the REP by starting from a solution to the IEP, neither if the REP solution producing this IEP solution is unique up to equivalence (see Section \ref{sec:eq}). However, Corollary \ref{badred-IEP} implies that the existence of a curve with CM by $\mathcal{O}$ and bad reduction at $\mathfrak{p}\mid p$ gives a solution to the IEP. Hence, the absence of a solution to the IEP implies potentially good reduction. 

\subsubsection{A solution from closer}\label{sec:closeex}
Let $K=\Q(\eta)$ given by $\eta^6 + 13\eta^4 + 50\eta^2 + 49=0$. Note that $\Q(i) \subset K$. The unique curve with CM by $\OO_K$ is given by the equation
$$C: v^2=u^7 + 1786u^5 + 44441u^3 + 278179u.$$
This curve has bad reduction at $7$ and at $11$. Let us consider a \Int~solution to the IEP  $[x,a,\gamma,n,b,c/n,d/n]=[ 5, 2, 16, 7, 5, -12, 8]$ and $[d_1,d_2,d_3]=[i,0,0]$ for $p=7$. Thus,
$$
\iota(\mu)=\begin{pmatrix}
5 & 2 & 5\\
1 & 0 & -12\\
0 & 1 & 8
\end{pmatrix}\,\text{and}\,\,
\iota(i)=\begin{pmatrix}
i & 0& 0\\
0 & i & 0 \\
0 & 0 & i\end{pmatrix}
$$
By direct computations of the reduction (special fiber of the stable model) or by \cite[Example 6.8]{Lor20} or by checking that the polarization is decomposable, we know that the special fiber of the stable model of $C$ is the union of three elliptic curves with endomorphism algebra containing $\Q(i)$. There is only one elliptic curve with endomorphism ring containing $\Z[i]$ over $\overline{\F}_{7}$ and it is supersingular. We denote this curve by $E$ and set $\mathcal{R}=\End(E)\subseteq\mathcal{B}_{7,\infty}$. Hence, the Jacobian of the curve $C$ has reduction at 7 such that $J_7\simeq E\times E\times E$, where the polarization on $E^3$ is the product polarization. Therefore, with the notation in Section~\ref{sec: embedding_cubic}, and more precisely, by \eqref{eq:iotamu}, we have that $y=z^\vee$, $w^{\vee}=w,$ $a=z^{\vee}z$, $b=z^{\vee}wz$, and $\gamma=z^{\vee}w^2z$.

For the chosen solution we have $a=2$. Thus, if $z=(z_1,z_2)$ with $z_i\in\End(E)$, we have two cases: $|z_1|=|z_2|=1$ or $z_1z_2=0$. The second case gives a contradiction, since it yields $b=5=2w_{11}$ or $2w_{22}$ and $5$ is odd. Now, since $|z_1|=|z_2|=1$, they define isomorphisms and up to isomorphisms of the second and third copy of $E$, we can assume $z_1=z_2=1$. From the equations for $b$, $c/n$ and $d/n$ we obtain $ w_{11}+w_{22}=8$, $\Tr(w_{12})=-3$ and $|w_{12}|=w_{11}w_{22}-12$. This implies $\{w_{11},w_{22}\}=\{4,4\}$ or $\{3,5\}$ in order to have $|w_{12}|>0$. There is no element of trace $-3$ and norm $3$ in the maximal ideal $\mathcal{R}$ of $\mathcal{B}_{7,\infty}$, so, up to conjugation $w_{11}=w_{22}=4$ and $w_{12}=\frac{-3+j}{2}$. 
Thus the corresponding REP solution is
$$
\iota_0(\mu)=\begin{pmatrix}
5 & 1 & 1\\
1 & 4 & \frac{-3+j}{2}\\
1 & \frac{-3-j}{2} & 4
\end{pmatrix}
\,\text{and}\,\,\iota_0(i)=\begin{pmatrix}
i & 0& 0\\
0 & 0 & i \\
0 & i &
 0\end{pmatrix} .
$$

\begin{remark} If $p\nmid n$, then the reduction $J_{\mathfrak{p}}$ of the Jacobian is isomorphic to $E\times E\times E$ by Remark \ref{E3}. We implemented the strategy developed in the previous example to compute the lifts of a given solution to the REP to all possible solutions to the IEP giving it. We ran this for all solutions to the IEP with $p$ not dividing $n$ in Tables \ref{table: cubicANDimagfields} and \ref{table: planequarticfields} and we always found a unique (up to equivalence, see Section \ref{sec:eq}) lift to a solution to the REP.
\end{remark}

\section{Equivalence of solutions}\label{sec:eq} 

Let $X/M$ be a genus $3$ curve with CM by a sextic CM order $\mathcal{O}$ and a primitive CM type and $\mathfrak{p}\mid p$ a prime of $M$ of bad reduction for $X$. In Section \ref{Sec: Embedding Problem}  we discussed how this phenomenon gives rise to a solution to the REP problem, see Corollary~\ref{badred-REP}, and to the IEP problem, see Corollary~\ref{badred-IEP}. In this section, we discuss the different choices in the process to obtain the corresponding solutions to these problems and we define as equivalent ones the different solutions that could have been obtained from $(X,\mathfrak{p})$ by making them differently.

When we define a REP solution from $(X,\mathfrak{p})$, see Proposition \ref{prop: embeddingR}, we are fixing isomorphisms $\End(J)\simeq\mathcal{O}$ and $\bar{J}\simeq E\times A$ (as ppav). The first isomorphism makes a choice for $\eta$ among its conjugates in $\mathcal{O}$, while the second one picks the first elliptic curve and an order for the other two if $\lambda_A$ is decomposable. We would like to see all solutions coming from these different choices as equivalent. 

\begin{defn}\label{def:eqREP}
Two solutions $(E\times A, 1\times \lambda_A, \iota_0: \mathcal{O} \hookrightarrow \End(E\times A))$ and  $(E'\times A', 1\times \lambda_{A'}, \iota'_0: \mathcal{O} \hookrightarrow \End(E'\times A'))$ to the REP for $(\mathcal{O},p)$ are \emph{equivalent} if there exist
\begin{enumerate}
\item an automorphism $\sigma\in\Aut_\Z(\O)$,
\item an isomorphism $\phi_0:\,E'\times A'\rightarrow E\times A$ as principally polarized abelian varieties, i.e. we have  $1\times\lambda_{A'}=\phi_0^\vee(1\times \lambda_A)\phi_0$, and
\end{enumerate}
such that $\iota'_0(\theta)=\phi_0^{-1}\iota_0({\,^\sigma\theta})\phi_0$ for all $\theta\in \O$. 
\end{defn}

The group $\Aut_\Z(\O)$ is the subgroup of $\Gal(K^{\Gal}/K)$ leaving $\O$ invariant. Eventually, it may be trivial. If the polarization $\lambda_A$ is indecomposable then $E\simeq E'$ and $A\simeq A'$ and $\phi_0$ is the product of these isomorphisms in Definition \ref{def:eqREP}-(2).  
If $\lambda_A$ is decomposable, we can write $A=E_2\times E_3$ with $\lambda_A$ the product polarization and $\phi_0$ is just a permutation of the elliptic curves (up to isomorphism of the elliptic curves factors). The polarization $\lambda_A$ being indecomposable or not determines the reduction type of the curve, namely, falling into cases $(2)$ or $(3)$ of Proposition \ref{reductionResult}. 

\begin{defn}\label{def:eqIEP}
Two solutions $(E, 1\times \lambda , \iota: \mathcal{O} \hookrightarrow \mathcal{M}_{3\times3}(\mathcal{R}/n))$ with $\mathcal{R}\simeq\End(E)$ and  $(E', 1 \times \lambda', \iota: \mathcal{O}\hookrightarrow    \mathcal{M}_{3\times3}(\mathcal{R}'/n))$ with $\mathcal{R}'\simeq\End(E')$ to the IEP for $(\mathcal{O},p)$ are \emph{equivalent} if there exist
\begin{enumerate}
\item an automorphism $\sigma\in\Aut_\Z(\O)$,
\item a homomorphism $\phi:\Hom^0(E'^3,E^3)$ as principally polarized abelian varieties, i.e. we have $1\times \lambda' =\phi^\vee(1\times \lambda)\phi $,
\item an automorphism $\varphi\in\Aut(\mathcal{R}')$ inducing an automorphism $\varphi\in\Aut(\mathcal{M}_{3\times3}(\mathcal{R}'))$, 
\end{enumerate}
such that $\iota'(\theta)=\varphi(\phi^{-1}\iota({\,^\sigma\theta})\phi)$ for all $\theta\in \O$. 
\end{defn}

We show that conditions (1) and (2) in Proposition \ref{def:eqIEP} are naturally induced by conditions (1) and (2) in Proposition \ref{def:eqREP}.

\begin{proposition}\label{prop:IEP}
Let $(E\times A, 1\times \lambda_A, \iota_0: \mathcal{O} \hookrightarrow \End(E\times A))$ and  $(E'\times A', 1\times \lambda_{A'}, \iota'_0: \mathcal{O} \hookrightarrow \End(E'\times A'))$ be two solutions to the REP which are equivalent in the sense of Definition ~\ref{def:eqREP}. Then the two induced {(by the same element $\eta\in\mathcal{O}$ as in Section \ref{sec: embedding_img})} \Int~solutions  $(E^3,1\times \lambda,\iota:\mathcal{O}\hookrightarrow \mathcal{M}_{3\times 3}(\mathcal{R}/n))$ and $(E'^3,1\times \lambda', \iota':\mathcal{O} \hookrightarrow \mathcal{M}_{3\times 3}(\mathcal{R}'/n))$ are equivalent. 
\end{proposition}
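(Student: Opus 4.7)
The plan is to construct the required IEP-equivalence directly from the REP-equivalence $(\sigma, \phi_0)$ by transporting it through the isogenies used in Section~\ref{sec: embedding_img} to pass from a REP-solution to its induced IEP-solution. Recall that each REP-solution gives rise to an isogeny $F = 1 \times s_\mu : E^3 \to E \times A$ (and analogously $F' : E'^3 \to E' \times A'$), and the IEP data are then given by $\iota(\theta) = F^{-1}\iota_0(\theta)F$, $1\times\lambda = F^\vee(1\times\lambda_A)F$, with the corresponding formulas on the primed side. Since both constructions are carried out with the same $\eta$, the two sides are linked only through $\phi_0$ and $\sigma$.

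The natural candidate for the IEP-equivalence is
\[
\phi \;:=\; F^{-1} \circ \phi_0 \circ F' \;\in\; \Hom^0(E'^3, E^3), \qquad \varphi := \id,
\]
together with the same $\sigma \in \Aut_\Z(\O)$. With this choice, the two conditions of Definition~\ref{def:eqIEP} reduce to telescoping calculations. The polarization identity $\phi^\vee(1\times\lambda)\phi = 1\times\lambda'$ follows by substituting the definition of $\lambda$, cancelling $F^\vee$ against $(F^\vee)^{-1}$ and $F$ against $F^{-1}$, and then applying the REP relation $\phi_0^\vee(1\times\lambda_A)\phi_0 = 1\times\lambda_{A'}$. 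The embedding identity $\iota'(\theta) = \phi^{-1}\iota(^\sigma\theta)\phi$ follows analogously, by substituting $\iota(^\sigma\theta) = F^{-1}\iota_0(^\sigma\theta)F$ and applying $\iota'_0(\theta) = \phi_0^{-1}\iota_0(^\sigma\theta)\phi_0$.

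The main point requiring attention is that $\phi$ a priori lies only in $\Hom^0$ (because $F^{-1} = \tfrac1n\widetilde{F}$) rather than being a genuine isogeny, so one has to check that conjugation by $\phi$ preserves the integrality condition built into Definition~\ref{def:eqIEP}: the resulting matrices must land in $\mathcal{M}_{3\times 3}(\mathcal R'/n)$ with first row in $\mathcal R'$. This is automatic from the computation above, because $\iota'(\theta)$ is by definition the image of $\iota'_0(\theta) \in \End(E'\times A')$ under $\Hom^0$-conjugation by $F'$, and our identity expresses exactly the same element as the $\Hom^0$-conjugate of the integral endomorphism $\iota_0(^\sigma\theta) \in \End(E \times A)$ by $F$ composed with $\phi_0$. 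One should also observe that $\phi_0$ furnishes an identification $\mathcal R \cong \mathcal R'$ as maximal orders in $\mathcal B_{p,\infty}$, which is what allows taking $\varphi = \id$ after this identification; if one prefers to keep $\mathcal R$ and $\mathcal R'$ separate, the same identification can be repackaged as a non-trivial $\varphi \in \Aut(\mathcal R')$, which is exactly the role of item~(3) in Definition~\ref{def:eqIEP}.
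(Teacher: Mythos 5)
Your proposal is correct and follows essentially the same route as the paper: the paper's proof also defines $\phi$ by transporting $\phi_0$ through the isogenies $1\times s_\mu$ and $1\times s'_\mu$ (this is exactly the commutative square displayed in case (b) of its proof), and then verifies the polarization and embedding identities by the same cancellation you describe. The only difference is presentational --- the paper splits into the cases of $\phi_0$ diagonal versus a permutation of decomposable factors and writes out the resulting matrix for $\phi$ explicitly, whereas you give the uniform formula $\phi=F^{-1}\phi_0 F'$ at once.
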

\begin{proof}
If the two solutions to the REP verify condition (1) in Definition \ref{def:eqREP}, then it is straightforward to check that the induced solutions to the IEP verify condition (1) in Definition \ref{def:eqIEP}. Assume now that the solutions to the REP verify condition (2) in Definition \ref{def:eqREP} and let  $\phi_0:\,E'\times A'\rightarrow E\times A$ the isomorphism of principally polarized abelian varieties such that $\iota'_0(\cdot)=\phi_0^{-1}\iota_0(\cdot)\phi_0$. 
\begin{itemize}
\item [(a)] 
 Assume first that $A$ and $A'$ are indecomposable. 
 Denote $\phi_0: =\begin{pmatrix} \phi_{1} & 0 \\ 0 & \phi_{2} \\ \end{pmatrix}:\,E'_0 \times A' \rightarrow E\times A$. 
 Recall that $s(P,Q)=z(P)+wz(Q)$. Then $s'=z'(P)+w'z'(Q)=\phi_2^{-1}z\phi_1(P)+\phi^{-1}_2 wz\phi_1(Q)=\phi_2^{-1}s(\phi_1(P),\phi_1(Q))$.
 We conclude that $s'=\phi_2^{-1}s(\phi_1 \times \phi_1)$.
To check the equation verified by the polarizations, recall that 
$1\times\lambda_{A'}=\phi_0^\vee(1\times \lambda_A)\phi_0$. 
Then we have:
\begin{eqnarray*}
1 \times \lambda'&=&(1 \times s'^\vee)(1\times \lambda_{A'})(1\times s')=\\
&=&(1 \times (\phi_1^\vee \times \phi_1^\vee)s^\vee (\phi_2^{-1})^\vee )(\phi_1^\vee \times \phi_2^\vee) (1 \times \lambda_A) (\phi_1\times \phi_2)(1\times \phi_2^{-1}s(\phi_1\times \phi_1))=\\
&=&(\phi_1^\vee \times \phi_1^\vee \times \phi_1^\vee )(1 \times s^\vee )(1\times \lambda )(1\times s) (\phi_1 \times \phi_1 \times \phi_1)=\\
&=&(\phi_1^\vee \times \phi_1^\vee \times \phi_1^\vee)(1\times \lambda)(\phi_1\times \phi_1 \times \phi_1).
\end{eqnarray*}
By letting $\phi=\phi_1\times \phi_1 \times \phi_1$, it is straightforward to check that $\iota'(\theta)=\phi^{\vee}\iota(\,^\sigma \theta) \phi$.
\item [(b)] We consider now the case where $A$ and $A'$ are decomposable. 
If $\phi_0: =\begin{pmatrix} \phi_{1} & 0 \\ 0 & \phi_{2} \\ \end{pmatrix}:\,E'_0 \times A' \simeq E'_0 \times E'_1\times E'_2\rightarrow E\times A \simeq E_0 \times E_1 \times E_2$
then the proof is similar to the one in (a). 
Otherwise, assume that $\phi_{0}=\begin{pmatrix} 0 & \phi_{00} & 0\\ \phi_{01} & 0 & 0 \\ 0 & 0 & \phi_{02} \end{pmatrix}:\,E'\times A' \simeq E'_0\times E'_1\times E'_2\rightarrow E\times A \simeq E_0 \times E_1\times E_2$. Let $s'=\begin{pmatrix} s'_{11} & 0 \\ 0 & s'_{22} \\ \end{pmatrix}: E'_1 \times E'_2\rightarrow {E'}_{0}^{2}$ and $s= \begin{pmatrix} s_{11} & 0 \\ 0 & s_{22} \\ \end{pmatrix}: E_1\times E_2\rightarrow E_0^2$. 
Then it is easy to check that the homomorphism $\phi: \Hom^0(E'^3,E^3)$ that makes the following diagram commutative: 
\[ \begin{tikzcd}
E'_0\times E'_1 \times E'_2 \arrow{r}{\phi_0} \arrow[swap]{d}{1 \times s' } &  E_0\times E_1\times E_2 \arrow{d}{1 \times s} \\%
E'_0\times E'_0 \times E'_0 \arrow{r}{\phi}& E_0\times E_0 \times E_0
\end{tikzcd}
\]
is given by the formula 
\begin{eqnarray*}\phi=\begin{pmatrix} 0 & \phi_{00}s'^{\vee }_{11}/n & 0\\  s_{11}\phi_{01} & 0 & 0 \\ 0 & 0 & s_{22}\phi_{02} (s'_{22})^{\vee }/n \end{pmatrix},
\end{eqnarray*}
where the elliptic curves are identified with their duals. 
The condition on the polarization in Definition~\ref{def:eqREP} (2) follows from the fact that we have $1\times \lambda _{A'}=\phi_0^\vee (1 \times \lambda_A) \phi_0$ and that $s$ and $s'$ are such that $ \lambda = s ^{\vee} \lambda_A s$ and that $\lambda'=s'^{\vee}\lambda_{A'}s'$, respectively. In a similar manner, it is straightforward to check that $\iota'(\theta)=\phi^{-1} \iota(\,^{\sigma} \theta) \phi$. 
\end{itemize}
\end{proof}

\begin{remark}
{Note that since we were not able to proof that given a normal solution to the IEP, there is a solution to the REP which induces it, we cannot claim that the inverse implication of Proposition \ref{prop:IEP} is true. We leave this to future work.}
\end{remark}

The following two propositions give explicit formulae for equivalent solutions to the IEP obtained from equivalent solutions to the REP. 
These formulae will be used in Section~\ref{sa4} for counting equivalent solutions. 
First, let us rewrite condition (1) in Definition \ref{def:eqREP} for the corresponding \Int~solutions to the IEP. 

\begin{proposition}\label{prop: equivforExA}
Let $(E\times A, 1\times \lambda_A, \iota_0:\mathcal{O} \hookrightarrow \text{End}(E\times A))$ and $(E\times A, 1\times \lambda_A, \iota'_0: \mathcal{O} \hookrightarrow \text{End}(E\times A))$ be two solutions to the REP which are equivalent in the sense of Definition ~\ref{def:eqREP}(1) and let $(E,\eta,[x,d/n,a,c/n,b,\gamma,n,x_1,x_2,x_3])$  and $(E,\eta',[x',d'/n,a',c'/n,b',\gamma',n,x'_1,x'_2,x'_3])$  be the corresponding induced solutions to the IEP. Take $\mu=-\eta^2$ and $\mu'=-\eta'^2$ and assume that $\mu' = a_0+a_1\mu+a_2\mu^2$ for some $a_0,\,a_1$ and $a_2\in\Q$. 
Then we have:
\begin{align*}
x' =\, &a_0+a_1x+a_2x^2+a_2a,\\
a' =\, &a(a_2^2c/n+(a_1+a_2x)^2)+b(a_2^2d/n+2a_2(a_1+a_2x)),
\end{align*}
and $d'/n'$, $c'/n'$, $b'$, $\gamma'$, $n'$ are determined by Equations \eqref{relationsFromKLLNOS}-\eqref{eq:C}.

\end{proposition}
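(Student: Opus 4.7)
The plan is to directly compute the entries of $\Lambda_1'$ by expanding $\iota_0(\mu')$ in terms of $\iota_0(\mu)$ via the polynomial identity $\mu' = a_0 + a_1\mu + a_2\mu^2$ and then reading off the induced IEP data. By Definition~\ref{def:eqREP}(1) (with $\phi_0 = 1$), the equivalence produces some $\sigma \in \Aut_\Z(\O)$ with $\iota'_0(\theta) = \iota_0({\,^\sigma\theta})$ for all $\theta \in \O$; since $\sigma$ preserves the maximal totally real subfield $K_+$, we have $\iota'_0(\mu') = \iota_0(\sigma(\mu'))$ with $\sigma(\mu') \in K_+ = \Q(\mu)$. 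After replacing $(a_0,a_1,a_2)$ by the coordinates of $\sigma(\mu')$ in the basis $\{1,\mu,\mu^2\}$ if necessary, we may reduce to $\iota'_0(\mu') = a_0\mathrm{I} + a_1\iota_0(\mu) + a_2\iota_0(\mu)^2$. The two induced IEPs then use the isogenies $F = 1\times s_\mu$ and $F' = 1\times s_{\mu'}$, both of which have the identity in their $(1,1)$-block.

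Writing $\iota_0(\mu) = \begin{pmatrix} x & y \\ z & w \end{pmatrix}$ in the $E \times A$ block decomposition and expanding $\iota_0(\mu)^2$ blockwise gives the four blocks of $\iota'_0(\mu')$. Because $F'$ has identity $(1,1)$-block, the $(1,1)$-entry of $\Lambda_1' = (F')^{-1}\iota'_0(\mu')F'$ equals the $(1,1)$-block of $\iota'_0(\mu')$, namely $a_0 + a_1 x + a_2(x^2 + yz)$; using the identity $yz = a$ from~\eqref{eq:iotamu} this yields $x' = a_0 + a_1 x + a_2 x^2 + a_2 a$. Moreover, by the construction of Section~\ref{sec: embedding_img}, the first row of $\Lambda_1'$ to the right of $x'$ reads $y'\cdot s_{\mu'} = (y'z', y'w'z') = (a',b')$, where $y' = a_1 y + a_2(xy+yw)$ and $z' = a_1 z + a_2(zx+wz)$ are the off-diagonal blocks of $\iota'_0(\mu')$.

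The core calculation is the expansion of the product $y'z'$ in the non-commutative endomorphism ring: it distributes into nine terms, each of which reduces to a central integer multiple of $a$ or $b$ thanks to three identities. Specifically, $yz = a$ and $ywz = b$ lie in $\Z\subset\End(E)$ and so commute with everything by~\eqref{eq:iotamu}, while the defining relation $w^2z = z\,(c/n) + wz\,(d/n)$ for the isogeny $s_\mu$ (from the discussion just after~\eqref{eq:iotamu}) yields the further scalar identity $yw^2z = a(c/n) + b(d/n)$. Substituting these into the expansion and collecting coefficients of $a$ and $b$ gives
\begin{equation*}
y'z' \;=\; a\bigl((a_1 + a_2 x)^2 + a_2^2(c/n)\bigr) + b\bigl(2a_2(a_1 + a_2 x) + a_2^2(d/n)\bigr),
\end{equation*}
which is the claimed formula for $a'$.

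Finally, since $\mu' = \sigma(\mu)$ is Galois-conjugate to $\mu$ over $\Q$, the two elements share the same minimal polynomial $t^3 - At^2 + Bt - C$. Applying~\eqref{eq:A}--\eqref{eq:C} to the primed solution therefore forces $d'/n' = A - x'$, then $c'/n' = x'(d'/n') - a' - B$, then $b' = C + x'(c'/n') + a'(d'/n')$, and finally~\eqref{eq1:gamma}--\eqref{eq1:n} give $\gamma' = a'(c'/n') + b'(d'/n')$ and $n' = a'\gamma' - (b')^2$. The main obstacle is the combinatorial expansion of $y'z'$ in a non-commutative ring: it is not a priori evident that the nine mixed terms collapse into a rational combination of $a$ and $b$ alone, and this cancellation hinges crucially on the three scalar identities above.
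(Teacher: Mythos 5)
Your proposal is correct and follows essentially the same route as the paper: expand $\iota_0(\mu')=a_0+a_1\iota_0(\mu)+a_2\iota_0(\mu)^2$ blockwise, read off $x'=a_0+a_1x+a_2x^2+a_2yz$ with $yz=a$, and compute $a'=y'z'$ using the identities $yz=a$, $ywz=b$, and $w^2z=z\frac{c}{n}+wz\frac{d}{n}$. The only cosmetic difference is that the paper factors $y'=y(a_1+a_2x+a_2w)$ and $z'=(a_1+a_2x+a_2w)z$ before multiplying, whereas you expand the nine terms directly; both collapse to the same expression.
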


\begin{proof} Let $\iota_0(\mu) = \begin{pmatrix}x & y \\ z & w\end{pmatrix}$ as given in \eqref{eq:xyzw}. Since $\iota_0$ is a ring homomorphism, we have 
$$
\iota_0(\mu') = a_0\begin{pmatrix}1 & 0 \\ 0 & 1\end{pmatrix}+a_1\begin{pmatrix}x & y \\ z & w\end{pmatrix}+a_2\begin{pmatrix}x^2+yz & xy+yw\\ zx+wz & zy+ w^2\end{pmatrix}=\begin{pmatrix}x' & y'\\ z' & w'\end{pmatrix}.
$$ It follows from \eqref{eq:iotamu} that $x$ is an integer, $a=yz$, $b=ywz$ and $w^2z=z\frac{c}{n}+wz\frac{d}{n}$. Thus, we obtain $x' =a_0+a_1x+a_2x^2+a_2yz$, $y' =y(a_1+a_2x+a_2w)$ and $z' =(a_1+a_2x+a_2w)z$. 
Since $a'=y'z'$, we get the second equality in the statement.
\end{proof}

\begin{remark}\label{rmk:equivExA}
By similar arguments as in Proposition \ref{prop: equivforExA}, we could explicitly relate $x_1, x_2,x_3$ and $x_1', x_2', x_3'$ (and $d_1, d_2,d_3$ and $d_1', d_2', d_3'$ in the case that $K$ contains an imaginary quadratic field), but the formulas are overly complicated and we do not display them here. 
\end{remark}

We also rewrite the equations in Definition \ref{def:eqREP} (2) in terms of the corresponding equivalent \Int~solutions to the IEP.

\begin{proposition} \label{prop: equiv}
Let $(E\times A, 1\times \lambda_A, \iota_0: \mathcal{O} \hookrightarrow \End(E\times A))$ and  $(E'\times A', 1\times \lambda_{A'}, \iota'_0: \mathcal{O} \hookrightarrow \End(E'\times A'))$ be two solutions to the REP which are equivalent in the sense of Definition ~\ref{def:eqREP}(2).  Let $\eta\in\O$ totally imaginary and consider the two induced \Int~solutions  $(E,\eta,[x,d/n,b,c/n,a,\gamma,n])$ and $(E',\eta, [x',d'/n,b',c'/n,a',\gamma',n'])$ as in Section \ref{sec: embedding_img}. Then:
\begin{itemize}
\item[(1)] If $(A,\lambda_A)$ is indecomposable, then $[x,a,b,c,d,\gamma,n]=[x',a',b',c',d',\gamma',n']$ and $x_i=x'_i$ up to conjugation in $\mathcal{R}$.
\item[(2)] If $(A,\lambda_A)\simeq(E_1\times E_2,1\times 1)$, then {either $[x,a]=[x',a']$ or $[x',a']=[w_{11},\deg(z_1)+\deg(w_{12})]$ or $[x',a']=[w_{22},\deg(z_2)+\deg(w_{12})]$ where $x,y,z$ and $w=(w_{ij})_{1\leq i,j \leq 2}$ are as in Equation~\ref{eq:xyzw}.}
\end{itemize}
\end{proposition}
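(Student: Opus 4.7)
The plan is to analyze the equivalence relation $\iota_0'(\theta) = \phi_0^{-1}\iota_0(\theta)\phi_0$ separately in the two cases, by first determining the block form of the isomorphism $\phi_0 \colon E'\times A' \to E\times A$ of principally polarized abelian varieties. In case (1), since $(A,\lambda_A)$ is indecomposable and $(E,1)$ is automatically indecomposable, a Krull--Schmidt-type uniqueness of decomposition for principally polarized abelian varieties forces $\phi_0 = \phi_1 \times \phi_2$ with $\phi_1 \in \mathcal{R}^\times = \Aut(E)$ and $\phi_2 \in \Aut(A,\lambda_A)$. Writing $\iota_0(\mu) = \begin{pmatrix} x & y \\ z & w \end{pmatrix}$ as in \eqref{eq:xyzw}, the primed blocks become $\phi_1^{-1}x\phi_1$, $\phi_1^{-1}y\phi_2$, $\phi_2^{-1}z\phi_1$, $\phi_2^{-1}w\phi_2$. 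Since $x$, $a = yz$, $b = ywz$ are integers by \eqref{relationsFromKLLNOS} and hence central in $\mathcal{R}$, conjugation by $\phi_1$ leaves them fixed, yielding $x' = x$, $a' = a$, $b' = b$; then the remaining invariants $c,d,\gamma,n$ are determined by $x,a,b$ together with the fixed characteristic polynomial of $\mu$ via \eqref{eq:A}--\eqref{eq:C} and $n = a\gamma - b^2$, so they also agree. For the $x_i$, I will show that $s_{\mu'} = \phi_2^{-1} s_\mu (\phi_1 \times \phi_1)$, as in the argument of Proposition \ref{prop:IEP}(a), whence $F' = \phi_0^{-1} F \phi$ with $\phi = \phi_1 \times \phi_1 \times \phi_1$; substituting into $\iota'(\eta) = F'^{-1}\iota_0'(\eta) F'$ then collapses to $\iota'(\eta) = \phi^{-1}\iota(\eta)\phi$, giving $x_i' = \phi_1^{-1} x_i \phi_1$, i.e.\ conjugation by a unit of $\mathcal{R}$.

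In case (2), the principal polarization on $E\times E_1 \times E_2$ is the product one, and its automorphism group is generated by permutations $\sigma \in S_3$ of the three (pairwise isomorphic, all supersingular) factors together with diagonal elements of $(\mathcal{R}^\times)^3$. Expanding
\[
\iota_0(\mu) = \begin{pmatrix} x & y_1 & y_2 \\ z_1 & w_{11} & w_{12} \\ z_2 & w_{21} & w_{22} \end{pmatrix},
\]
the Rosati self-duality $\iota_0(\mu) = (1\times \lambda_A)^{-1}\iota_0(\mu)^\vee(1\times\lambda_A)$ with $\lambda_A = I$ yields $z_i = y_i^\vee$, $w_{21} = w_{12}^\vee$, and $w_{11}, w_{22} \in \Z$. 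Conjugating by a permutation $\sigma$ places $x$, $w_{11}$, or $w_{22}$ in the $(0,0)$-slot according to $\sigma(0) \in \{0,1,2\}$, producing the three possibilities for $x'$. In the cases $\sigma(0) \in \{1,2\}$, a direct block computation of $a' = y'z'$ combined with the identities above gives $y_1^\vee y_1 + w_{12} w_{12}^\vee = \deg(z_1) + \deg(w_{12})$, respectively $\deg(z_2) + \deg(w_{12})$, since $y_i^\vee y_i = \deg(y_i) = \deg(z_i)$ and $w_{12} w_{12}^\vee = \deg(w_{12})$. The subsequent conjugation by the diagonal unit factor of $\phi_0$ acts only on the integer-valued entries $x'$ and $a'$ and thus leaves them fixed.

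I expect the main obstacle to be the rigorous justification of the block (or permuted-block) form of $\phi_0$: it relies on a Krull--Schmidt-type uniqueness for decompositions of principally polarized abelian varieties, and in case (2) on the fact that over $\overline{\F}_p$ all supersingular elliptic curves with endomorphism ring isomorphic to $\mathcal{R}$ are mutually isomorphic, which is what realizes the $S_3$-action on the three factors by actual isomorphisms rather than abstract permutations. Once this structural input is granted, the remaining verifications are the straightforward matrix manipulations outlined above.
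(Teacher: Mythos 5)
Your proposal is correct and follows essentially the same route as the paper: block-diagonal $\phi_0$ in the indecomposable case (giving $x'=x$, $a'=a$ since these are integers, with the remaining entries determined by \eqref{relationsFromKLLNOS}--\eqref{eq:C}), and monomial $\phi_0$ (permutation times diagonal units) in the decomposable case, with the same block computation yielding $[x',a']=[w_{11},\deg(z_1)+\deg(w_{12})]$ or $[w_{22},\deg(z_2)+\deg(w_{12})]$. Your treatment of the $x_i$-conjugation claim via $F'=\phi_0^{-1}F\phi$ with $\phi=\phi_1\times\phi_1\times\phi_1$ is in fact more explicit than the paper's, which leaves that part to the argument of Proposition~\ref{prop:IEP}(a).
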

\begin{proof} 
Let  $\phi_0:\,E'\times A'\rightarrow E\times A$ an isomorphism of principally polarized abelian varieties such that $\iota'_0(\cdot)=\phi_0^{-1}\iota_0(\cdot)\phi_0$.
Let $\mu=-\eta^2$ and $\iota_0(\mu) = \begin{pmatrix}x & y \\ z & w\end{pmatrix}$  and let $\iota'_0(\mu) = \begin{pmatrix}x' & y' \\ z' & w'\end{pmatrix}$ as given in \eqref{eq:xyzw}. We have that
\begin{equation} \label{eq: REPexp}
\begin{aligned}
\iota'_0(\mu) &=  \phi_0^{-1} \iota_0(\mu) \phi_0  = \phi_0^{-1} \begin{pmatrix}
x & y\\
z & w
\end{pmatrix} \phi_0.
\end{aligned}
\end{equation}
\noindent
(1) If $(A,\lambda_A)$ is indecomposable, then $\phi_0 = \phi_{1}\times\phi_{2}$, where $\phi_{1} : E' \rightarrow E$ and $\phi_{2} :A'  \rightarrow A$ are isomorphisms. 
Then, by \eqref{eq: REPexp}, we have $x' = \phi^{-1}_{1}x\phi_{1} = x$, since $x \in \mathbb{Z}$, $y' = \phi^{-1}_{1}y \phi_{2}$, and $z' = \phi_{2}^{-1}z\phi_{1}$. Thus, $a'=y'z'= \phi^{-1}_{1}yz\phi_{1}=a$.
Since  $d'/n'$, $c'/n'$, $b'$, $\gamma'$, $n'$ satisfy Equations \eqref{relationsFromKLLNOS}-\eqref{eq:C} and depend only on $x'$ and $a'$, we have the equality in the proposition.

\noindent
(2) If $(A,\lambda_A)$ is decomposable, then it is isomorphic to $(E_1\times E_2,1\times 1)$. Use this isomorphism to identify them and write $y=(y_1,y_2)$ and $z=\begin{pmatrix}z_1\\z_2\end{pmatrix}$. We have $y_1=z_1^\vee$ and $y_2=z_2^\vee$. Moreover, $w=\begin{pmatrix}w_{11} & w_{12}\\ w_{12}^\vee & w_{22} \end{pmatrix}$ with $w_{11},w_{22}\in\Z$. The isomorphism $\phi_0$ has to be a permutation of the elliptic curves up to isomorphism, because it preserves the polarization. Let us consider first the case $\phi_{0}=\begin{pmatrix}\phi_{00} & 0 & 0\\ 0 & 0 & \phi_{01} \\ 0 & \phi_{02} & 0\end{pmatrix}:\,E'\times A' \simeq E\times E_2\times E_1\rightarrow E\times A \simeq E\times E_1\times E_2$, then as in the previous case it is straightforward to check that  $[x,d/n,a,c/n,b,\gamma,n]=[x',d'/n',a',c'/n',b',\gamma',n']$. 

Consider now $\phi_{0}=\begin{pmatrix}0 & \phi_{00} & 0 \\ \phi_{01} & 0 & 0 \\ 0 & 0 &  \phi_{02} \end{pmatrix}:\,E'\times A' \simeq E_1\times E\times E_2\rightarrow E\times A \simeq E\times E_1\times E_2$. Any other case will be a composition of cases of these two types. By {Equation}~\eqref{eq:iotamu},  we have that $a=\text{deg}(z_1)+\deg(z_2)$, $b=w_{11}\deg(z_1)+w_{22}\deg(z_2)+\Tr(z_1^\vee w_{12}z_2)$, $c/n=\deg(w_{12})-w_{11}w_{22}$ and $d/n=w_{11}+w_{22}$. By performing similar computations as in the other cases, we have that $x'=w_{11}$, $a'=\text{deg}(z_1)+\deg(w_{12})$, $b'=x\deg(z_1)+w_{22}\deg(z_2)+\Tr(z_1^\vee w_{12}z_2)$, $c'/n'=\deg(z_2)-xw_{22}$ and $d'/n'=x+w_{22}$.
\end{proof}

\begin{remark}\label{rmk: equiv} As in Proposition \ref{prop: equivforExA} we could explicitly relate $x_1, x_2,x_3$ and $x_1', x_2', x_3'$ 
(and $d_1, d_2,d_3$ and $d_1', d_2', d_3'$ in the case that $K$ contains an imaginary quadratic field) in Proposition \ref{prop: equiv}. But again the formulas are overly complicated and we do not display them here. 
\end{remark}

In addition to conditions (1) and (2) in Definition \ref{def:eqIEP}, when we define a \Int~solution to the IEP from a solution to the REP, we make another choice: {the isomorphisms $\mathcal{R}\simeq\End(E)$ and $\mathcal{R}'\simeq \End(E')$}. To take this into account,  we included condition (3) in Definition \ref{def:eqREP}. Automorphisms of $\mathcal{R}$ induce automorphisms of $\mathcal{B}=\mathcal{R}\otimes\mathbb{Q}$, and those are described by the Skolem-Noether Theorem below (see for instance \cite[Theorem 7.1.3, Corollaries 7.1.4 and 7.1.5]{Voight}).

\begin{proposition}\label{Prop:EquivAutom}
Every $\Q$-algebra automorphism of $\mathcal{B}$ is inner, i.e., $\Aut_\Q(\mathcal{B})\simeq \mathcal{B}^\times/\Q^{\times}$.
\end{proposition}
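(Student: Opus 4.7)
The plan is to deduce this from the classical Skolem–Noether theorem applied to the quaternion algebra $\mathcal{B}$. Recall that in all the cases of interest in the paper, $\mathcal{B} = \mathcal{R} \otimes \Q$ is either the rational quaternion algebra $\mathcal{B}_{p,\infty}$ or an imaginary quadratic field; in the quaternion case (which is the only substantive case, since an imaginary quadratic field has only the trivial and complex-conjugation $\Q$-automorphism, and the latter is already inner up to a choice of $j$) the algebra $\mathcal{B}$ is a $4$-dimensional central simple algebra over $\Q$.

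The first step is to invoke the Skolem–Noether theorem in the form: every $\Q$-algebra automorphism of a central simple $\Q$-algebra is inner. Thus any $\varphi \in \Aut_\Q(\mathcal{B})$ is of the form $\varphi(x) = b x b^{-1}$ for some $b \in \mathcal{B}^\times$. This defines a surjective group homomorphism
\[
\Psi \colon \mathcal{B}^\times \longrightarrow \Aut_\Q(\mathcal{B}), \qquad b \longmapsto (x \mapsto b x b^{-1}).
\]

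The second step is to compute $\ker \Psi$. An element $b \in \mathcal{B}^\times$ lies in the kernel exactly when $b x b^{-1} = x$ for all $x \in \mathcal{B}$, i.e.\ when $b$ is central. Since $\mathcal{B}$ is central simple over $\Q$, its center is $\Q$, and hence $\ker \Psi = \Q^\times$. Passing to the quotient yields the asserted isomorphism $\Aut_\Q(\mathcal{B}) \simeq \mathcal{B}^\times/\Q^\times$.

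There is no real obstacle here: the statement is essentially a restatement of Skolem–Noether together with the computation of the center of a quaternion algebra, and this is exactly what is recorded in the cited references of Voight's book. The only thing worth spelling out (if one wants to be pedantic) is that the map $\Psi$ is well defined as a group homomorphism and that surjectivity is precisely the content of Skolem–Noether; the injectivity on $\mathcal{B}^\times/\Q^\times$ is immediate from centrality.
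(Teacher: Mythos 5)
Your proof is correct and is exactly the argument the paper intends: the paper offers no proof of its own beyond citing Skolem--Noether in Voight's book, and your two steps (surjectivity of $b\mapsto(x\mapsto bxb^{-1})$ from Skolem--Noether for central simple algebras, kernel equal to the center $\Q^\times$) are precisely the content of that citation. One small caveat: your parenthetical about the imaginary quadratic case is off --- complex conjugation on an imaginary quadratic field is \emph{not} inner (the field is commutative, so the statement would in fact be false there) --- but this is immaterial, since in the context of the IEP the elliptic curves are supersingular and $\mathcal{B}=\mathcal{B}_{p,\infty}$ is central simple over $\Q$.
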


\begin{example} We revisit the example in Subsection \ref{sec:closeex}: we had $\mu^3 - 13\mu^2 + 50\mu - 49=0$, $i\in K$  and $A=E^2$ and the \Int~IEP and REP solutions :

$$
\iota(\mu)=\begin{pmatrix}
5 & 2 & 5\\
1 & 0 & -12\\
0 & 1 & 8
\end{pmatrix}\,\text{and}\,\,
\iota(i)=\begin{pmatrix}
i & 0& 0\\
0 & i & 0 \\
0 & 0 & i\end{pmatrix},
$$

$$
\iota_0(\mu)=\begin{pmatrix}
5 & 1 & 1\\
1 & 4 & \frac{-3+j}{2}\\
1 & \frac{-3-j}{2} & 4
\end{pmatrix}
\,\text{and}\,\,\iota_0(i)=\begin{pmatrix}
i & 0& 0\\
0 & 0 & i \\
0 & i &
 0\end{pmatrix}.
$$
Take $E'\times A'=E\times E\times E$ where we permute the first and second copy of the elliptic curve. This gives the equivalent solution to the REP (according to Definition \ref{def:eqREP}(2)):
$$
\iota'_0(\mu)=\begin{pmatrix}
4 & 1 & \frac{-3+j}{2}\\
1 & 5 & 1\\
\frac{-3-j}{2} & 1 & 4
\end{pmatrix}
\,\text{and}\,\,\iota'_0(i)=\begin{pmatrix}
0 & 0& i\\
0 & i &0 \\
i & 0 &
 0\end{pmatrix},
$$
which produces the equivalent \Int~IEP solution (accordingly to Definition \ref{def:eqIEP}(2)):
$$
\iota'(\mu)=\begin{pmatrix}
4 & 5 & 18\\
1 & 0 & -19\\
0 & 1 & 9
\end{pmatrix}\,\text{and}\,\,
\iota'(i)=\begin{pmatrix}
i & 0& 0\\
0 & i & 0 \\
0 & 0 & i\end{pmatrix}.
$$

Starting again by the REP solution $\iota$ we could have taken another conjugate of $\mu$, for example $\mu'=\mu^2 - 8\mu + 16$ to get  the equivalent solution to the REP (according to Definition \ref{def:eqREP}(1)):
$$
\iota'_0(\mu)=\iota_0(\mu')=\begin{pmatrix}
3 & \frac{-1-j}{2}& \frac{-1+j}{2}\\
\frac{-1+j}{2} & 5 & 1\\
\frac{-1-j}{2} & 1 & 5
\end{pmatrix}
\,\text{and}\,\,\iota'_0(i)=\begin{pmatrix}
i & 0& 0\\
0 & 0 & i \\
0 & i &
 0\end{pmatrix},
$$
that yields the equivalent \Int~IEP solution (accordingly to Definition \ref{def:eqIEP}(1)):
$$
\iota'(\mu)=\begin{pmatrix}
3 & 4 & 17\\
1 & 0 & -24\\
0 & 1 & 10
\end{pmatrix}\,\text{and}\,\,
\iota'(i)=\begin{pmatrix}
i & 0& 0\\
0 & i & 0 \\
0 & 0 & i\end{pmatrix}.
$$
\end{example}

\section{Counting the number of solutions}\label{sec:alg}  

In this section we compute the number of \Int~solutions
\begin{itemize}
    \item $(E,\eta,[x,d/n,a,c/n,b,\gamma,n,x_1,x_2,x_3])$ to the IEP when $K$ does not contain an imaginary quadratic field, where $\eta$ is a root of the polynomial $t^6+At^4+Bt^2+C=0$;
    \item $(E,\mu, f\sqrt{D}, [x,d/n,a,c/n,b,\gamma,n,d_1,d_2,d_3])$ to the IEP when $K$ contains an imaginary quadratic field $\Q(\sqrt{D})$, where $\mu$ is a root of the polynomial $t^3-At^2+Bt-C=0$, and $D\in \Z_{<0}$ is square-free, and $f \in \Z$.
\end{itemize}

We use the notation introduced in Section \ref{sec: embedding_img}. First we will describe some bounds for $[x,d/n,a,c/n,b,\gamma,n]$, $[x_1,x_2,x_3]$ and $[d_1,d_2,d_3]$ in terms of $A,B$ and $C$. Then, in Section \ref{Ss:BoundForp}, we describe the bounds for the primes $p$ for which there are solutions. This improves the bounds in \cite{KLLNOS}. In the case when the field $K$ contains an imaginary quadratic field, we do not only give a bound but also a very small list of primes for which we can find solutions. Finally, Section \ref{sec: MainAlgorithm} contains the algorithm and some implementation details.


\subsection{Bounds for $x,d/n,a,c/n,b,\gamma,n$}\label{Ss:Bounds}
\mbox{}
Using \eqref{eq:A}--\eqref{eq:C}, we obtain expressions for $\frac{d}{n}, \frac{c}{n}, b$ in terms of $A,B,C,x,a$:
$$
    \frac{d}{n}  = - x +A, \qquad    \frac{c}{n}  = - x^2 + Ax - a - B , \qquad    b  = - x^3 + Ax^2 - 2ax - Bx + Aa + C. 
$$
Since $\gamma$ and $n$ are determined by \eqref{eq1:gamma} and \eqref{eq1:n}, this means that, given $A, B, C$, it is enough to find all the possibilities for $x$ and $a$ to determine all possible values for $[x,d/n,a,c/n,b,\gamma,n]$.

\begin{lemma}  \label{bd: x and a}
We have $0< x < \sqrt{A^2 - 2B}$, $0 < a \leq \frac{A^2 - 2B - x^2}{2}$, and $\gamma \leq a\left(A^2 -2B -x^2 - 2a \right)$.
\end{lemma}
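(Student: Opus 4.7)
The approach is to use that the symmetric bilinear form $\lambda\Lambda_1$ on $\R^3$ is positive definite. Indeed, $\Lambda_1=\iota(\mu)$ is a $3\times 3$ matrix with rational entries (Corollary~\ref{Cor:ABC}) whose characteristic polynomial is $t^3-At^2+Bt-C$ by Proposition~\ref{prop:lambda1}, so its eigenvalues on $\R^3$ are the totally positive conjugates $\mu_1,\mu_2,\mu_3$ of $\mu$; combined with the Rosati identity $\lambda\Lambda_1=\Lambda_1^T\lambda$ from \eqref{eq: involution}, this makes $\Lambda_1$ self-adjoint and positive with respect to $\lambda$. Consequently $\Lambda_1$ is diagonalizable in a $\lambda$-orthonormal eigenbasis $\{v_1,v_2,v_3\}$.

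For the bound on $x$, I would decompose $e_1=\sum\alpha_i v_i$; then $p_i:=\alpha_i^2$ satisfies $\sum p_i=e_1^T\lambda e_1=1$ and $\sum p_i\mu_i=e_1^T(\lambda\Lambda_1)e_1=x$, so $x$ is a convex combination of the positive $\mu_i$. This already gives $x>0$, and the fact that $\Lambda_1 e_1=xe_1+e_2\neq xe_1$ shows $e_1$ is not an eigenvector, so $x<\max_i\mu_i<\sqrt{\mu_1^2+\mu_2^2+\mu_3^2}=\sqrt{A^2-2B}$ (strict because each $\mu_j>0$).

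For the bounds on $a$ and $\gamma$, I would consider the $\lambda$-orthogonal compression of $\Lambda_1$ to $V=e_1^{\perp}=\operatorname{span}(e_2,e_3)$; in the basis $\{e_2,e_3\}$ it is precisely the bottom-right block $W=\begin{pmatrix} 0 & c/n \\ 1 & d/n\end{pmatrix}$ of $\Lambda_1$. A direct check shows that $W$ is self-adjoint and positive definite with respect to the $2\times 2$ block $L=\begin{pmatrix} a & b \\ b & \gamma\end{pmatrix}$ of $\lambda$ (one verifies $LW=W^TL$ and that $v^TLWv$ is the restriction of $\lambda\Lambda_1$ to $V$), so $W$ has positive real eigenvalues $w_1\le w_2$ with $w_1+w_2=d/n$ and $w_1w_2=-c/n$. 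Matching traces then yields
\[
A^2-2B \;=\; \tr(\Lambda_1^2) \;=\; x^2 + 2a + \tr(W^2) \;=\; x^2 + 2a + (w_1^2+w_2^2),
\]
and since $w_1^2+w_2^2\ge 0$ one reads off $a\le(A^2-2B-x^2)/2$.

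Finally, substituting $\gamma=-aw_1w_2+b(w_1+w_2)$ into $n=a\gamma-b^2>0$ gives $(b-aw_1)(b-aw_2)<0$, hence $aw_1<b<aw_2$ (with $w_1\neq w_2$, since $w_1=w_2$ would force $W$ to be a scalar multiple of the identity, contradicting its explicit entries). Plugging $b<aw_2$ back in leads to
\[
\gamma \;<\; aw_2(w_1+w_2)-aw_1w_2 \;=\; aw_2^2 \;\le\; a(w_1^2+w_2^2) \;=\; a(A^2-2B-x^2-2a).
\]
The main delicacy in the argument is justifying the positive definiteness of $\lambda\Lambda_1$, which is what unlocks the compression/Cauchy interlacing picture for $W$; but it follows directly from the total positivity of $\mu$ together with the Rosati identity, once one records that $\Lambda_1$ is a rational matrix with characteristic polynomial $t^3-At^2+Bt-C$.
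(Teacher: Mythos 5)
Your argument is correct, but it takes a genuinely different route from the paper. The paper's proof is purely computational: it first establishes $x>0$ by a sign-chasing contradiction (assuming $x\le 0$ forces $d>0$, then $c<0$, then $b>0$, and finally $0<n<0$ via $n=\frac{a^2c}{n}+\bigl(\frac{ad}{n}-b\bigr)b$), and then deduces all three inequalities at once from the single identity
\[
A^2-2B \;=\; x^2+2a+\frac{\gamma}{a}+\Bigl(\frac{b}{a}-\frac{d}{n}\Bigr)^2+\frac{n}{a^2},
\]
obtained by rewriting $\Tr_{K_+/\Q}(\mu^2)=\tr(\Lambda_1^2)$ using \eqref{eq1:n} and \eqref{eq1:gamma} and using $a,\gamma,n>0$. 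Your spectral argument explains \emph{why} such an identity exists: $\lambda$ is positive definite (its leading minors are $1$, $a$, $n$), $\Lambda_1$ is $\lambda$-self-adjoint with spectrum the totally positive conjugates $\mu_i$, so $\tr(\Lambda_1^2)=\sum\mu_i^2=A^2-2B$ decomposes as $x^2+2a+(w_1^2+w_2^2)$ via the compression $W$, and the paper's extra terms $\frac{\gamma}{a}+(\frac{b}{a}-\frac{d}{n})^2+\frac{n}{a^2}$ are exactly your $w_1^2+w_2^2$ re-expanded. Your route buys structural information the paper's does not (e.g.\ $x$ is a convex combination of the $\mu_i$, hence $x<\max_i\mu_i$; the sharper $\gamma<aw_2^2$; the interpretation of $d/n$ and $-c/n$ as $\tr W$ and $\det W$), and it replaces the ad hoc contradiction for $x>0$ by positivity of a quadratic form; the cost is that you must justify positive definiteness of $\lambda$ and the identity $\lambda\Lambda_1=\Lambda_1^T\lambda$, both of which do follow from facts the paper records ($a,\gamma,n\in\Z_{>0}$ below \eqref{eq:polarization}, and \eqref{eq: involution} with $\theta=\mu$ together with rationality of the entries from Corollary~\ref{Cor:ABC}). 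All the small verifications in your sketch check out: $LW=W^TL$ is precisely \eqref{eq1:gamma}, $n=-(b-aw_1)(b-aw_2)$, and $w_1\ne w_2$ because $W$ is visibly not scalar.
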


\begin{proof} Recall from the text below (\ref{eq:polarization}) that we have $\gamma, a, n >0$.
We first show that $x>0$. Suppose $x\leq 0$. By \eqref{eq:A}, we have $A = \frac{d}{n} +x >0$. Thus, $\frac{d}{n} > 0$. Then, $n>0$ implies $d > 0$. Moreover, from \eqref{eq:B} we have $B + \frac{c}{n} = \frac{dx}{n} - a < 0$. Since $B>0$, we obtain $c<0$. Now \eqref{eq:C} implies that $b = C+ \frac{cx}{n} + \frac{ad}{n} > 0$. Then,
$$
0< n = a\gamma - b^2 = \frac{a^2c}{n} + \left(\frac{ad}{n} - b \right)b < \frac{a^2c}{n} - \frac{cx}{n}b < 0,
$$
which leads to a contradiction. Hence $x>0$.

By combining \eqref{eq1:n}--\eqref{eq:B}, we can rewrite $\Tr_{K_+/\Q}(\mu^2)=A^2-2B$ as 
\begin{equation*}
A^2-2B = x^2+2a+2\frac{c}{n}+\frac{d^2}{n^2} = x^2+2a+2\frac{\gamma}{a}+\left(\frac{b}{a}-\frac{d}{n}\right)^2-\frac{b^2}{a^2} = x^2+2a+\frac{\gamma}{a}+\left(\frac{b}{a}-\frac{d}{n}\right)^2+\frac{n}{a^2} >0.
\end{equation*}
Since $\gamma, a, n >0$, we obtain the claimed bounds. 
\end{proof}


\subsection{Bounds for traces and norms} \label{sec: bounds_for_xi} %

The bounds in this section are used in~\textbf{Algorithm \ref{sa2}}. We first remark the following.

\begin{remark}\label{remark:integers}
Let $y_i, y_j \in \mathcal{B}_{p, \infty}$ such that $\Tr(y_i) = \Tr(y_j) = 0$, and $r,s\in \Q$. We have 
\begin{equation}\label{eq: Trxixj-int}
rs\Tr(y_iy_j) = \N(sy_i - ry_j) - s^2\N(y_i) - r^2\N(y_j).
\end{equation}
Thus, since $y_i - y_j \in \mathcal{B}_{p, \infty}$, we have that $\N(y_i), \N(y_i), \N(y_i - y_j) \in \mathbb{Z}_{\geq 0}$. Hence, by taking $r=s=1$, we obtain $\Tr(y_iy_j) \in \mathbb{Z}$.
\end{remark}

\begin{lemma}\label{Lemma:bdTrace} For every $y_i, y_j \in \mathcal{B}_{p, \infty}$, we have
$$
\mid \Tr(y_iy_j)\mid \leq 2(\N(y_i)\N(y_j))^{1/2}.
$$
\end{lemma}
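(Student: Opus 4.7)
The plan is to deduce the inequality from two standard facts about the reduced norm and trace on $\mathcal{B}_{p,\infty}$: multiplicativity of the reduced norm, namely $\N(y_iy_j) = \N(y_i)\N(y_j)$, together with the positive definiteness that comes from $\mathcal{B}_{p,\infty}$ being ramified at the archimedean place (so that $\mathcal{B}_{p,\infty}\otimes\R \cong \bbH$).

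The first step I would isolate is the auxiliary pointwise bound
$$\Tr(a)^{2} \leq 4\,\N(a) \qquad \text{for every } a \in \mathcal{B}_{p,\infty}.$$
To see this, write $a = \frac{\Tr(a)}{2} + a^{\circ}$ where $a^{\circ} := a - \frac{\Tr(a)}{2}$ has reduced trace zero, hence satisfies $(a^{\circ})^{\vee} = -a^{\circ}$. Since $\frac{\Tr(a)}{2}$ is central, expanding $\N(a) = a a^{\vee}$ gives
$$\N(a) \;=\; \left(\frac{\Tr(a)}{2}\right)^{\!2} + \N(a^{\circ}).$$
Because $\mathcal{B}_{p,\infty}$ is definite, the reduced norm is positive definite, so $\N(a^{\circ})\geq 0$, and the bound follows.

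The second step is to apply this bound to $a = y_iy_j$ and invoke multiplicativity of the reduced norm, obtaining
$$|\Tr(y_iy_j)| \;\leq\; 2\sqrt{\N(y_iy_j)} \;=\; 2\sqrt{\N(y_i)\N(y_j)},$$
which is precisely the desired inequality. I do not anticipate a real obstacle here: the decomposition of a quaternion into its scalar and pure parts is elementary, multiplicativity of the reduced norm is standard, and the only input beyond formal manipulation is the positive definiteness of the norm form, which is guaranteed by the ramification of $\mathcal{B}_{p,\infty}$ at $\infty$ (i.e.\ by the fact that $E$ is supersingular, so $\calB = \calR\otimes\Q = \mathcal{B}_{p,\infty}$ is definite).
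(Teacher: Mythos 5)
Your proof is correct, but it takes a different route from the paper, whose entire proof is the one-line remark that the inequality is ``a straightforward application of Cauchy--Schwarz.'' The intended argument there is the bilinear one: the identity $rs\Tr(y_iy_j)=\N(sy_i-ry_j)-s^2\N(y_i)-r^2\N(y_j)$ (Remark 5.2 of the paper, for trace-zero elements) exhibits $\N(sy_i-ry_j)$ as a quadratic form in $(s,r)$ which is positive semidefinite because $\mathcal{B}_{p,\infty}$ is definite, and nonnegativity of its discriminant in $r$ gives exactly $\Tr(y_iy_j)^2\le 4\N(y_i)\N(y_j)$. You instead prove the single-element bound $\Tr(a)^2\le 4\N(a)$ --- equivalently, that the discriminant $\D(a)=\Tr(a)^2-4\N(a)$ the paper introduces later is nonpositive --- via the scalar-plus-pure decomposition, and then apply it to $a=y_iy_j$ using multiplicativity of the reduced norm. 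Both arguments rest on the same input (positive definiteness of the norm form on the definite algebra $\mathcal{B}_{p,\infty}$), and your computation $\N(a)=\bigl(\tfrac{\Tr(a)}{2}\bigr)^2+\N(a^{\circ})$ is correct. Your version has the small advantages of handling arbitrary elements directly (the lemma is stated for all $y_i,y_j$, not just trace-zero ones) and of not requiring one to set up the trace pairing as an inner product; the paper's version generalizes more readily to the weighted identity with general $r,s$ that it uses elsewhere.
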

\begin{proof} This is just a straightforward application of Cauchy-Schwarz inequality.
\end{proof}

Combining Lemma \ref{Lemma:bdTrace} with the bounds for the norms given in Lemmata \ref{bd: Nx} and \ref{bd: Nd} below leads to an efficient algorithm to compute all possible solutions. First notice the following.

\begin{remark}\label{rmk: NeNf}
For $e,e' \in \mathcal{B}_{p, \infty}$, we have $\N(e+e')+\N(e-e')=2(\N(e)+\N(e'))$. This implies 
$$
\N(e + e') \leq 2(\N(e)+\N(e')).
$$
\end{remark}

\begin{lemma}  \label{bd: Nx}
With the notation as in Proposition \ref{prop: lambda2} we have
\begin{align*}
\N(x_1) &\leq x,\\ 
\N(x_2) &\leq \min\left\{a\left( x-\N(x_1) \right), \frac{2}{\gamma} \left( n \left( x-\N(x_1) \right) + \frac{b^2-n}{\gamma}\N(x_3) \right) \right\}, \\
\N(x_3) &\leq \min \left\{ \gamma(x - \N(x_1)), \frac{2}{a} \left( n \left( x-\N(x_1) \right) + \frac{b^2-n}{a}\N(x_2) \right)
 \right\}. 
 \end{align*}
\end{lemma}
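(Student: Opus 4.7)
The plan is to use the first equation from Proposition~\ref{prop: relations}, namely
\[
n(x-\N(x_1)) = \gamma\,\N(x_2) + a\,\N(x_3) + b\,\Tr(x_2x_3),
\]
together with the fact that $x_1,x_2,x_3$ are all trace-zero elements of $\mathcal{B}_{p,\infty}$. The key identity is that, for trace-zero $y,z$, one has $\N(y-z)=\N(y)+\N(z)+\Tr(yz)$. Applying this to $y=\gamma x_2$ and $z=bx_3$ gives $\N(\gamma x_2 - b x_3) = \gamma^2\N(x_2)+b^2\N(x_3)+\gamma b\,\Tr(x_2x_3)$.

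First I would multiply the displayed equation above by $\gamma$ and use $a\gamma-b^2=n$ to regroup:
\[
\gamma n(x-\N(x_1)) \;=\; \gamma^2\N(x_2)+\gamma b\,\Tr(x_2x_3) + a\gamma\,\N(x_3) \;=\; \N(\gamma x_2 - b x_3) + n\,\N(x_3).
\]
Since both terms on the right are non-negative and $n,\gamma>0$, this immediately yields the bound $\N(x_1)\le x$ as well as the first bound $\N(x_3)\le \gamma(x-\N(x_1))$. The symmetric computation, multiplying instead by $a$, gives the analogous identity $an(x-\N(x_1)) = \N(ax_3-bx_2) + n\N(x_2)$ and hence the first bound $\N(x_2)\le a(x-\N(x_1))$.

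For the second bounds in the minimum, I would apply Remark~\ref{rmk: NeNf} to the decomposition
\[
\gamma x_2 \;=\; (\gamma x_2 - b x_3) + b x_3,
\]
obtaining $\gamma^2\N(x_2)\le 2\bigl(\N(\gamma x_2 - b x_3) + b^2\N(x_3)\bigr)$. Substituting $\N(\gamma x_2 - b x_3)=\gamma n(x-\N(x_1))-n\N(x_3)$ from the boxed identity and dividing by $\gamma^2$ gives exactly
\[
\N(x_2)\le \frac{2}{\gamma}\Bigl(n(x-\N(x_1))+\tfrac{b^2-n}{\gamma}\N(x_3)\Bigr).
\]
The analogous manipulation for $ax_3=(ax_3-bx_2)+bx_2$ produces the corresponding bound for $\N(x_3)$. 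Taking the minimum with the bounds from the previous paragraph finishes the proof.

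I do not expect a serious obstacle: the only subtlety is keeping the signs straight in the quaternionic trace-zero identities (the same trick used in Remark~\ref{remark:integers}), and noticing that writing $a\gamma=n+b^2$ is what converts the linear combination $\gamma^2\N(x_2)+\gamma b\Tr(x_2x_3)+a\gamma\N(x_3)$ into $\N(\gamma x_2-bx_3)+n\N(x_3)$, which is the single algebraic observation that drives everything.
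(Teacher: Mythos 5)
Your proof is correct and is essentially the paper's own argument: the paper rewrites the first relation of Proposition~\ref{prop: relations} as $x=\N(x_1)+\tfrac{n}{\gamma}\N\bigl(\tfrac{\gamma}{n}x_2-\tfrac{b}{n}x_3\bigr)+\tfrac{1}{\gamma}\N(x_3)$ (and symmetrically with $a$), which is your boxed identity $\gamma n(x-\N(x_1))=\N(\gamma x_2-bx_3)+n\N(x_3)$ up to rescaling by $n$, and then applies Remark~\ref{rmk: NeNf} to the same decomposition $\gamma x_2=(\gamma x_2-bx_3)+bx_3$. Nothing further is needed.
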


\begin{proof}
By \eqref{eq: Trxixj-int}, the first equation in Proposition \ref{prop: relations} can be written in the following two ways
\begin{equation}\label{rewrite:x}
x =\N(x_1) + \frac{n}{\gamma}\N\left(\frac{\gamma}{n}x_2 - \frac{b}{n}x_3\right) + \frac{1}{\gamma}\N(x_3)=\N(x_1) + \frac{n}{a}\N\left(\frac{b}{n}x_2 - \frac{a}{n}x_3\right) + \frac{1}{a}\N(x_2).
\end{equation}
Using the first part of \eqref{rewrite:x}, we obtain $\N(x_1) \leq x$, $\N(x_3) \leq \gamma\left( x-\N(x_1) \right)$, and
\begin{align*} 
\N\left(\frac{\gamma}{n}x_2 - \frac{b}{n}x_3\right) &\leq \frac{ \gamma \left( x-\N(x_1) \right) - \N(x_3) }{n}. 
\end{align*}
By taking $e = \frac{\gamma}{n}x_2 - \frac{b}{n}x_3$ and $e' = \frac{b}{n}x_3$ in Remark \ref{rmk: NeNf}, we get
$$
 \frac{\gamma^2}{n^2}\N(x_2) \leq 2\left( \N\left(\frac{\gamma}{n}x_2 - \frac{b}{n}x_3\right) + \frac{b^2}{n^2}\N(x_3) \right) \leq  2\left(\frac{ \gamma \left( x-\N(x_1) \right)}{n} + \frac{b^2-n}{n^2}\N(x_3) \right).
$$
In a similar manner, the second part of \eqref{rewrite:x} leads to the claimed bounds.
\end{proof}

\begin{lemma}\label{bd: Nd} If $f\sqrt{D} \in \O$ with $D \in \Z_{<0}$ then, with the notation as in Proposition \ref{prop: lambda3}, we have  
\begin{align*}
\N(d_1) &\leq -f^2D, \\
\N(d_2) &\leq \min\left\{a\left(-f^2D-\N(d_1) \right), \frac{2}{\gamma} \left( n \left(-f^2D -\N(d_1) \right) + \frac{b^2-n}{\gamma}\N(d_3) \right) \right\}, \\
\N(d_3) &\leq \min \left\{ \gamma(-f^2D - \N(d_1)), \frac{2}{a} \left( n \left(-f^2D -\N(d_1) \right) + \frac{b^2-n}{a}\N(d_2) \right)
 \right\}. 
 \end{align*}
 \end{lemma}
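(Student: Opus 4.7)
My plan is to imitate the proof of Lemma \ref{bd: Nx} line by line, since the starting point is structurally identical. The input is now the first equation of Proposition~\ref{prop: relations_d},
\[
-f^2 D = \N(d_1) + \frac{\gamma}{n}\N(d_2) + \frac{a}{n}\N(d_3) + \frac{b}{n}\Tr(d_2 d_3),
\]
which is exactly the shape of the first equation in Proposition~\ref{prop: relations} with $x$ replaced by $-f^2D$ and each $x_i$ replaced by $d_i$. The crucial observation, which allows norms to bound things, is that $-f^2D > 0$ because $D \in \Z_{<0}$ and $f \in \Z \setminus \{0\}$; in particular, the entire right-hand side is a sum of real numbers bounded by a positive quantity.

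First, I would invoke identity \eqref{eq: Trxixj-int} (applied with $y_i = d_2$, $y_j = d_3$, and suitable rational weights $r,s$) to absorb the mixed-trace term $\tfrac{b}{n}\Tr(d_2 d_3)$ into a complete square. Using $n = a\gamma - b^2$, the same computation as in the proof of Lemma~\ref{bd: Nx} yields the two equivalent rewritings
\[
-f^2 D \;=\; \N(d_1) + \frac{n}{\gamma}\,\N\!\left(\tfrac{\gamma}{n}d_2 + \tfrac{b}{n}d_3\right) + \frac{1}{\gamma}\,\N(d_3)
\;=\; \N(d_1) + \frac{n}{a}\,\N\!\left(\tfrac{b}{n}d_2 + \tfrac{a}{n}d_3\right) + \frac{1}{a}\,\N(d_2).
\]
Since each of the three summands on either side is non-negative (norms on $\mathcal{B}_{p,\infty}$ are $\geq 0$, $n,\gamma,a > 0$), throwing away terms immediately gives $\N(d_1) \leq -f^2 D$, together with $\N(d_3) \leq \gamma(-f^2D-\N(d_1))$ from the first rewriting and $\N(d_2) \leq a(-f^2D-\N(d_1))$ from the second.

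To obtain the second member of each minimum, I would apply Remark~\ref{rmk: NeNf} with $e = \tfrac{\gamma}{n}d_2 + \tfrac{b}{n}d_3$ and $e' = \tfrac{b}{n}d_3$ (respectively $e = \tfrac{b}{n}d_2 + \tfrac{a}{n}d_3$ and $e' = \tfrac{b}{n}d_2$ in the symmetric case), which bounds $\N(d_2)$ (resp.\ $\N(d_3)$) in terms of $\N(e)$ and $\N(e')$. Using the upper bound on $\N(e)$ coming from the displayed equality above and simplifying with $\gamma^2/n^2$ (resp.\ $a^2/n^2$) then yields the claimed inequalities with the factors $\tfrac{2}{\gamma}$ and $\tfrac{2}{a}$, together with the characteristic $(b^2-n)$ coefficient.

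I do not expect any genuine obstacle: the argument is a straightforward transcription of the proof of Lemma~\ref{bd: Nx}, and the only sanity check is the sign---ensuring $-f^2D$ plays the role of the positive quantity $x$ there, which is immediate from $D<0$. The only place to be a bit careful is in bookkeeping the coefficient $b^2 - n$ (it can be negative, in which case the corresponding bound is vacuous but still correct), and in confirming that the identity $\tfrac{1}{\gamma} + \tfrac{b^2}{n\gamma} = \tfrac{a}{n}$ (and its $a \leftrightarrow \gamma$ analog) recovers the coefficient of $\N(d_3)$ (resp.\ $\N(d_2)$) in Proposition~\ref{prop: relations_d}. Taking the minimum of the two inequalities obtained for $\N(d_2)$ and for $\N(d_3)$ respectively gives the statement.
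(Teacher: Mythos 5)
Your proposal is correct and is exactly the paper's argument (the paper's proof of this lemma just says it is similar to that of Lemma~\ref{bd: Nx}): you rewrite the first equation of Proposition~\ref{prop: relations_d} by completing the square via \eqref{eq: Trxixj-int}, discard non-negative terms using $-f^2D>0$, and then apply Remark~\ref{rmk: NeNf} to extract the second member of each minimum. The one slip is a sign: since the equation carries $+\tfrac{b}{n}\Tr(d_2d_3)$ and $\N(u+v)=\N(u)+\N(v)-\Tr(uv)$ for trace-zero $u,v$, the complete squares must be $\N\!\left(\tfrac{\gamma}{n}d_2-\tfrac{b}{n}d_3\right)$ and $\N\!\left(\tfrac{b}{n}d_2-\tfrac{a}{n}d_3\right)$ as in \eqref{rewrite:x}, not the plus-sign versions you display (which are false as identities, and would put a negative coefficient on the square term); with the minus signs restored the rest of your argument goes through verbatim.
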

\begin{proof}
The proof is similar to the one of Lemma \ref{bd: Nx}. The bounds follow after rewriting the first equation in Proposition \ref{prop: relations_d}.
\end{proof}


\subsection{Bounds for primes}\label{Ss:BoundForp}

In this section we will present bounds for the primes of bad reduction for genus 3 curves with CM by an order $\O$ in a sextic CM field. We give bounds for the following three different cases: 
\begin{itemize}
\item[(i)] If $\O$ contains $\zeta_4$, then $p\leq (A^2-2B)^3/2$ or $p\mid 2n$, 
\item[(ii)] If $\O$ contains $f\sqrt{D}$ with $D \in \Z_{<0}$ and $f \in \Z$, then $p\leq f^4D^2(A^2-2B)^3$, 
\item[(iii)] If $K$ does not contain an imaginary quadratic field, then $p\leq (A^2-2B)^4$,
\end{itemize}
where $A = \Tr_{K_+/\Q}(\mu)$ for a totally positive real $\mu \in \O$ such that $K_+ = \Q(\mu)$. We take $f := [\O_{\Q(\sqrt{D})} : \O\cap \Q(\sqrt{D})]$ so that $f\sqrt{D} \in \O$.

The bounds obtained in this section are not only asymptotically better than the bounds given in \cite{BCLLMNO,KLLNOS}, but Theorem \ref{thm: PrimeBdQ(i)} gives also a very short list of primes for case (i). Notice that $A=\Tr_{K_+/\Q}(\mu)$ in this paper corresponds to what is called $B$ in \cite{KLLNOS}, so while the bound in case (iii) in there is $O(A^{10})$, our bound is $O(A^8)$. 
 Checking that our bound for case (ii) is (actually much) better is more subtle: in this case, the roots of the sextic polynomial are of the form  $\pm\alpha_1\sqrt{d}$, $\pm\alpha_2\sqrt{d}$ and $\pm\alpha_3\sqrt{d}$ with $d=f^2D$, so the bound in \cite{KLLNOS} is $O(B^{10})=O(d^{10}\alpha_i^{20})$ and ours for the same polynomial is $O(d^8\alpha_i^{12})$, but in our setting we can directly work with the cubic polynomial defining the $\alpha_i$, so we get a bound $O(d^2\alpha_i^6)$.

\begin{theorem}[Case (i)] \label{thm: PrimeBdQ(i)} Let $K$ be a sextic CM field. Let $\O$ be an order in a sextic CM field containing~$\zeta_4$.
Let $X/M$ be a genus-3 curve such that its Jacobian is absolutely simple and has CM by $\O$. Let $\mathfrak{p}\subset \O_M$ 
(lying above rational prime $p$) be a prime of bad reduction for $X$. 
Then we have $p\mid 2fn$, where $n$ is a positive integer such that $[n]\ker(s) = 0$ in Lemma \ref{isogeny}. In particular, we have $p\leq  (A^2-2B)^3/2$ or $p\mid 2n$.
\end{theorem}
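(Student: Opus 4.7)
The plan is to combine three ingredients already established in the paper: Corollary~\ref{badred-IEP} (bad reduction produces an IEP solution), Corollary~\ref{cor: lambda3} (the presence of the automorphism of order $4$ forces a divisibility on $p$), and the non-negativity identity from the proof of Lemma~\ref{bd: x and a}. Since $\zeta_4 = i \in \O$, the intersection $\O \cap \Q(i)$ both contains $i$ and sits inside $\OO_{\Q(i)} = \Z[i]$, so it equals $\Z[i]$ and the conductor is $f = 1$; hence the divisibility $p \mid 2fn$ in the statement coincides with $p \mid 2n$. A prime $\mathfrak{p} \mid p$ of bad reduction of $X$ yields, via Proposition~\ref{prop: embeddingI} and Corollary~\ref{badred-IEP}, a \Int\ IEP solution $(E,\mu,\sqrt{-1},[x,d/n,a,c/n,b,\gamma,n,d_1,d_2,d_3])$ for $(\O,p)$. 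Applying Corollary~\ref{cor: lambda3} with $D=-1$ then immediately delivers $d_2=d_3=0$ and $p \mid 2n$, which is the first conclusion.

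To pass to the explicit bound, I would reuse the identity exhibited inside the proof of Lemma~\ref{bd: x and a}, namely
\begin{equation*}
A^2 - 2B \;=\; x^2 + 2a + \frac{\gamma}{a} + \left(\frac{b}{a} - \frac{d}{n}\right)^{\!2} + \frac{n}{a^2},
\end{equation*}
in which every summand is non-negative. Writing $T := A^2 - 2B$, this immediately gives $2a \leq T$ and $n/a^2 \leq T$, hence $n \leq a^2 T \leq (T/2)^2 \cdot T = T^3/4$. Combining with $p \mid 2n$ yields
\begin{equation*}
p \;\leq\; 2n \;\leq\; \frac{T^3}{2} \;=\; \frac{(A^2 - 2B)^3}{2},
\end{equation*}
which is the second conclusion.

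The only non-routine step is Corollary~\ref{cor: lambda3} itself, which rests on the Torelli-type argument of Proposition~\ref{prop: lambda3_autom} (an automorphism of order $4$ on the Jacobian cannot cyclically permute three elliptic components, so it must preserve a splitting compatible with the reduction) together with the primitivity result Proposition~\ref{prop: primitivity}. Once that divisibility is granted, the remainder is a clean optimization inside the inequalities already packaged in Lemma~\ref{bd: x and a}, and I do not anticipate any further obstacle.
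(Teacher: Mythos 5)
Your proposal is correct and follows essentially the same route as the paper: the divisibility $p\mid 2n$ comes from Corollary~\ref{cor: lambda3} (with $f=1$ because $\zeta_4\in\O$ forces $\O\cap\Q(i)=\Z[i]$), and the numerical bound comes from the inequalities of Lemma~\ref{bd: x and a}. Your derivation $n\le a^2(A^2-2B)\le (A^2-2B)^3/4$ from the non-negativity identity is a slight repackaging of the paper's estimate $n\le a^2\left(A^2-2B-x^2-2a\right)$, but it is the same lemma and the same argument in substance.
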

\begin{proof} 
By Lemma \ref{bd: x and a}, we obtain 
$$
n=a\gamma - b^2 \leq a^2\left(A^2 -2B -x^2 - 2a \right)  \leq \frac{\left(A^2 - 2B - x^2\right)^3}{2}. 
$$
Since, $x > 0$, we get the claimed bound.

By Corollary \ref{cor: lambda3}, $\iota(\sqrt{-1})$ is a diagonal matrix. Then, by Proposition \ref{prop: primitivity}, we get $p \mid 2fn$. In this case $f=1$ because $\zeta_4\in \O$ hence $\O\cap \Z[\zeta_4] = \Z[\zeta_4]$ So we only have $p \mid 2n$.
\end{proof}

\begin{remark}\label{remark: PrimeBdQ(zeta7)} 
The previous theorem is analogous for sextic orders containing $\zeta_7$. If a sextic order contains $\zeta_7$, then $K = \Q(\zeta_7)$ and the order containing $\zeta_7$ is the maximal order $\Z[\zeta_7]$. 
It is shown in \cite[Section 5.2]{IKLLMMV_mod} that there is 
only one curve with CM by $\Z[\zeta_7]$, and this curve does not have bad reduction.  
\end{remark}

To prove the cases (ii) and (iii) we follow the classical approach used in \cite{GorenLauter07, KLLNOS}. nevertheless, as already mentioned, the bounds presented in this section are better than the ones in the aforementioned references.

The main ingredient of the proofs of Theorems \ref{thm: PrimeBdImagQuad} and \ref{thm: PrimeBdgen} is the following lemma.

\begin{lemma}\cite[Corollary 2.1.2]{GorenLauter07} \label{lemma: comm}
Let $\calR$ be an order in a quaternion algebra $\calB$ and let $x,y\in \calR$. If $\N(x)\N(y) < p/4$, then $xy = yx$. 
\end{lemma}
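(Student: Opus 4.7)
The plan is to run the classical Goren--Lauter discriminant argument: if $xy \neq yx$, then $L := \mathbb Z\langle 1, x, y, xy\rangle$ is a rank-$4$ sublattice of $\calR$, and its discriminant with respect to the reduced trace pairing $(u,v) \mapsto \Tr(u\bar v)$ is forced to be at least the discriminant of a maximal order of $\calB = \calB_{p,\infty}$, namely $p^2$. The lemma then follows by bounding this discriminant from above in terms of $\N(x)\N(y)$.

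First I would reduce to pure quaternions. Writing $x = \tfrac12\Tr(x) + x_0$ and $y = \tfrac12\Tr(y) + y_0$ in $\calB$ with $\Tr(x_0) = \Tr(y_0) = 0$, scalars are central so $[x,y] = [x_0,y_0]$, and $\N(x_0) \le \N(x)$, $\N(y_0) \le \N(y)$. Assume for contradiction that $[x_0, y_0] \neq 0$. Using the identity $x_0[x_0,y_0] + [x_0,y_0]x_0 = 0$ (and its analogue for $y_0$), one checks that $\{1, x_0, y_0, x_0 y_0\}$ is $\Q$-linearly independent, hence a basis of $\calB$; the change of basis from it to $\{1, x, y, xy\}$ is upper-triangular with unit diagonal, so $\{1,x,y,xy\}$ is also a basis and $L$ has full rank $4$.

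Next I would carry out two explicit computations. Using $x_0^2 = -\N(x_0)$, $y_0^2 = -\N(y_0)$, $\overline{x_0 y_0} = y_0 x_0$, and $x_0 y_0 + y_0 x_0 = \Tr(x_0 y_0)$, direct expansion gives
\begin{equation*}
\N([x_0, y_0]) = 4\,\N(x_0)\N(y_0) - \Tr(x_0 y_0)^2 \le 4\,\N(x_0)\N(y_0).
\end{equation*}
For the Gram matrix $G$ of the pairing in the basis $\{1, x_0, y_0, x_0 y_0\}$, set $t := \Tr(x_0 y_0)$. The mixed entries $\Tr(x_0\,\overline{x_0 y_0}) = \Tr(x_0 y_0 x_0)$ and $\Tr(y_0\,\overline{x_0 y_0}) = \Tr(y_0^2 x_0)$ both vanish (using $x_0 y_0 x_0 = t x_0 + \N(x_0) y_0$ and $y_0^2 \in \Q$), while $\Tr(1\cdot\overline{x_0 y_0}) = \Tr(y_0 x_0) = t$, so
\begin{equation*}
G = \begin{pmatrix} 2 & 0 & 0 & t \\ 0 & 2\N(x_0) & -t & 0 \\ 0 & -t & 2\N(y_0) & 0 \\ t & 0 & 0 & 2\N(x_0)\N(y_0) \end{pmatrix}.
\end{equation*}
Permuting rows and columns by the even cycle $(2\,4\,3)$ splits $G$ into two $2\times 2$ blocks each of determinant $4\N(x_0)\N(y_0) - t^2$, so $\det G = \N([x_0,y_0])^2$. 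Since the change of basis to $\{1, x, y, xy\}$ has determinant $1$, this is also the trace-form discriminant $\disc(L)$.

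Finally, $L \subseteq \calR \subseteq \calR'$ for some maximal order $\calR'$ of $\calB_{p,\infty}$, and such an $\calR'$ has reduced discriminant $p$, equivalently trace-form discriminant $p^2$; therefore $\disc(L) = \N([x_0,y_0])^2 \ge p^2$, i.e.\ $\N([x_0,y_0]) \ge p$. Combining with the upper bound above,
\begin{equation*}
p \le \N([x_0,y_0]) \le 4\,\N(x_0)\N(y_0) \le 4\,\N(x)\N(y),
\end{equation*}
contradicting the hypothesis $\N(x)\N(y) < p/4$. The main technical obstacle is the Gram matrix computation: one must use the pure-quaternion multiplication rules carefully to see that the only non-block-diagonal entries are the $(1,4)$ and $(4,1)$ corners, and the collapse of $\det G$ to the perfect square $\N([x_0,y_0])^2$ is exactly what turns the maximal-order bound $p^2$ into the clean constant $p/4$. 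A minor caveat is that the bound has content only when $\calB$ is (isomorphic to) $\calB_{p,\infty}$, matching the paper's standing assumption on the ambient quaternion algebra.
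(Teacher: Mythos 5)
Your proof is correct and is essentially the argument of the cited source \cite[Lemma 2.1.1 and Corollary 2.1.2]{GorenLauter07}, which the paper invokes without reproducing: reduce to traceless parts, compute $\N([x_0,y_0])=4\N(x_0)\N(y_0)-\Tr(x_0y_0)^2$ and identify it with the square root of the trace-form discriminant of $\Z\langle 1,x,y,xy\rangle$, then compare with the discriminant $p^2$ of a maximal order in $\calB_{p,\infty}$. Your closing caveat is also apt — the bound is only meaningful under the paper's standing assumption that $\calB=\calB_{p,\infty}$.
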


\begin{theorem}[Case (ii)] \label{thm: PrimeBdImagQuad} Let $K$ be a sextic CM field containing $k = \Q(\sqrt{D})$ where where $D\in \Z_{<0}$ is square-free. Let $X/M$ be a genus-3 curve such that its Jacobian is absolutely simple and has CM by an order $\O$ in $K$. Let $f$ be the conductor of $\OO \cap k$ in $\OO_k$. 
Let $\mathfrak{p}\subset \O_M$ (lying above a rational prime $p$) be a prime of bad reduction for $X$. Then we have $p \leq f^4D^2(A^2-2B)^3$.
\end{theorem}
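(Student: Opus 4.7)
The plan is to follow the classical strategy of \cite{GorenLauter07, KLLNOS}: from the bad-reduction hypothesis, extract quaternionic elements $d_1,d_2,d_3\in\mathcal{R}$ appearing in a \Int{}~IEP solution; bound $\N(d_2)\N(d_3)$ in terms of $A,B,f,D$; and use Lemma~\ref{lemma: comm} to force commutativity that, via primitivity of the CM type, is incompatible with bad reduction whenever $p$ exceeds the claimed bound.

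First I would apply Corollary~\ref{badred-IEP} to obtain a \Int{}~IEP solution $(E,\mu,f\sqrt{D},[x,d/n,a,c/n,b,\gamma,n,d_1,d_2,d_3])$ for $(\O,p)$. Lemma~\ref{bd: Nd} yields $\N(d_2)\le a\cdot(-f^2 D)$ and $\N(d_3)\le \gamma\cdot(-f^2 D)$, while Lemma~\ref{bd: x and a} gives $a\le (A^2-2B)/2$ and $\gamma\le a(A^2-2B-x^2-2a)\le (A^2-2B)^2/2$, so that $a\gamma\le(A^2-2B)^3/4$. Combining these,
$$
\N(d_2)\,\N(d_3)\;\le\; a\gamma\, f^4 D^2 \;\le\; \frac{f^4 D^2(A^2-2B)^3}{4}.
$$

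Now suppose for contradiction that $p>f^4 D^2(A^2-2B)^3$. Then $\N(d_2)\N(d_3)<p/4$, and Lemma~\ref{lemma: comm} forces $d_2 d_3=d_3 d_2$; the analogous, strictly smaller, bounds on $\N(d_1)\N(d_2)$ and $\N(d_1)\N(d_3)$ also give pairwise commutation, so $d_1,d_2,d_3$ lie in a common commutative subfield $\mathcal{B}_0\subseteq\mathcal{B}_{p,\infty}$. Since each $d_i$ has trace zero (Proposition~\ref{prop: lambda3}), $\mathcal{B}_0$ is either $\Q$ or an imaginary quadratic field. The case $\mathcal{B}_0=\Q$ forces $d_1=d_2=d_3=0$, hence $\Lambda_3=0$ by Proposition~\ref{prop: lambda3}, contradicting $\Lambda_3^2=f^2 D\,I\neq 0$. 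Otherwise, since the characteristic polynomial of $\Lambda_3\in \mathcal{M}_{3\times 3}(\mathcal{B}_0)$ has eigenvalues $\pm f\sqrt{D}$ with multiplicities summing to the odd number $3$, we must have $\sqrt{D}\in\mathcal{B}_0$ and therefore $\mathcal{B}_0=\Q(\sqrt{D})$. Because $\iota(\mu)$ has rational entries, the action of $K$ on the reduction $J_{\mathfrak{p}}\sim E^3$ then factors through $\mathcal{M}_{3\times 3}(\Q(\sqrt{D}))$ via a fixed subfield $\Q(\sqrt{D})\subseteq\mathcal{B}_{p,\infty}$; this is precisely the configuration ruled out by the primitivity argument of \cite[Prop.~5.8]{KLLNOS} that underlies Propositions~\ref{prop:ss} and~\ref{prop: primitivity}, which forces $p\mid 2fDn$. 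But $n\le a\gamma\le(A^2-2B)^3/4$ then gives $p\le 2f|D|n< f^4 D^2(A^2-2B)^3$, contradicting our hypothesis. The main obstacle I foresee is spelling out this last primitivity step precisely: one must translate the commutativity of the $d_i$'s into a decomposition of the polarised reduction $(J_{\mathfrak{p}},\lambda)$ incompatible with $\Phi$ being primitive, in the spirit of the Frobenius-based arguments in the proof of Proposition~\ref{prop:ss}.
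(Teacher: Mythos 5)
Your proof is correct and follows essentially the same route as the paper: bad reduction yields a normal IEP solution, Lemmas~\ref{bd: Nd} and \ref{bd: x and a} give $4\N(d_i)\N(d_j)\le 4a\gamma f^4D^2\le f^4D^2(A^2-2B)^3$, Lemma~\ref{lemma: comm} then forces the $d_i$ to commute for $p$ above the bound, and the primitivity statement (Proposition~\ref{prop: primitivity}) yields $p\mid 2fDn$ with $n\le a\gamma$, closing the contradiction. The only difference is that you spell out the step from pairwise commutation to the subfield $\Q(\sqrt{D})$ and the case $\mathcal{B}_0=\Q$, which the paper compresses into the assertion that $\iota(f\sqrt{D})$ becomes diagonal; this is a welcome clarification rather than a divergence.
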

\begin{proof} By Proposition \ref{prop: embeddingR}, bad reduction gives a solution to the REP. This gives a solution to the IEP given by $\iota(\mu)=\Lambda_1$ and 
$\iota(f\sqrt{D})=\Lambda_3$. 
The entries of $\Lambda_1$ are rationals and the entries of $\Lambda_3$ are in $\Q(d_1,d_2,d_3)$, where $d_1,d_2,d_3\in\mathcal{R}$. Using the bounds in Lemma \ref{bd: Nd}, we have $\N(d_1)<- f^2D$, $\N(d_2)\leq -af^2D$ and $\N(d_3)\leq -\gamma f^2D$. Thus, by Lemma \ref{bd: x and a}, for any $d_i,d_j \in \{ d_1, d_2, d_3\}$, we have 
$$
4\N(d_i)\N(d_j)\leq 4f^4D^2a\gamma \leq 4f^4D^2a^2 \left(A^2 - 2B - x^2 -2a\right) \leq f^4D^2 \left(A^2 - 2B - x^2\right)^3 \leq f^4D^2 \left(A^2 - 2B\right)^3.
$$
Therefore, if $p> f^4D^2 \left(A^2 - 2B\right)^3$, by Lemma \ref{lemma: comm}, the quaternions $d_i$ commute. This implies that $\iota(f\sqrt{D}) = \text{diag}(f\sqrt{D}, f\sqrt{D}, f\sqrt{D})$. Hence, by Proposition~\ref{prop: primitivity}, we have $p \mid 2f D n$. As in Theorem~\ref{thm: PrimeBdQ(i)}, we get $p \leq fD\left(A^2 - 2B\right)^3/4$. If $p \nmid 2f D n$, then commutativity of $d_i$'s gives a contradiction. Therefore, we obtain that
$$
p \leq \max\{f^4D^2 \left(A^2 - 2B\right)^3, fD\left(A^2 - 2B\right)^3/4\} \leq f^4D^2 \left(A^2 - 2B\right)^3
$$
as claimed.
\end{proof}

\begin{theorem}[Case (iii)] \label{thm: PrimeBdgen} Let $K$ be a sextic CM field not containing an imaginary quadratic field. Let $X/M$ be a genus-3 curve such that its Jacobian is absolutely simple and has CM by an order $\O$ in $K$. Let $\mathfrak{p} \subset \O_M$ (lying above a rational prime $p$) be a prime of bad reduction.
Then we have $p\leq \left(A^2 - 2B\right)^4$. %
\end{theorem}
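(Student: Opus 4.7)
The plan is to mimic the strategy used in Theorem \ref{thm: PrimeBdImagQuad} (Case (ii)), but instead of applying Lemma \ref{lemma: comm} to the commutator data coming from $\iota(f\sqrt{D})$, I would apply it to the entries $x_1,x_2,x_3\in\mathcal{R}$ of the matrix $\Lambda_2=\iota(\eta)$ produced in Proposition \ref{prop: lambda2}, and conclude by Proposition \ref{prop: xi_non-comm}, which is the step where the hypothesis that $K$ contains no imaginary quadratic subfield enters.

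More precisely, since $X$ has bad reduction at $\mathfrak{p}\mid p$, Corollary \ref{badred-IEP} (together with the construction in Sections \ref{sec: embedding_img}--\ref{sec: embedding_eta}) gives a \Int\ IEP solution $(E,\eta,[x,d/n,a,c/n,b,\gamma,n,x_1,x_2,x_3])$ for $(\mathcal{O},p)$, with $x_1,x_2,x_3\in\mathcal{R}$ satisfying $\Tr(x_i)=0$. I would then invoke the norm bounds of Lemma \ref{bd: Nx}, which in particular yield $\N(x_2)\leq ax$ and $\N(x_3)\leq \gamma x$, and combine them with Lemma \ref{bd: x and a}.

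The key estimate to establish is
\begin{equation*}
4\,\N(x_2)\N(x_3)\;\leq\;4\,a\gamma\,x^{2}\;\leq\;4\,a^{2}(A^{2}-2B-x^{2}-2a)\,x^{2}\;\leq\;(A^{2}-2B-x^{2})^{3}x^{2}\;\leq\;(A^{2}-2B)^{4},
\end{equation*}
where the second inequality uses $\gamma\leq a(A^{2}-2B-x^{2}-2a)$, the third uses $a\leq (A^{2}-2B-x^{2})/2$ so that $4a^{2}\leq (A^{2}-2B-x^{2})^{2}$ together with $A^{2}-2B-x^{2}-2a\leq A^{2}-2B-x^{2}$, and the last uses $x^{2}\leq A^{2}-2B$ and $A^{2}-2B-x^{2}\leq A^{2}-2B$.

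With this estimate in hand, the proof concludes quickly: if $p>(A^{2}-2B)^{4}$, then $\N(x_2)\N(x_3)<p/4$, so Lemma \ref{lemma: comm} forces $x_2$ and $x_3$ to commute in $\mathcal{R}$. Since $K$ contains no imaginary quadratic subfield, this contradicts Proposition \ref{prop: xi_non-comm}. Hence $p\leq (A^{2}-2B)^{4}$. The only step requiring any care is checking that the correct pair $x_i,x_j$ is being bounded (the asymmetric bounds on $\N(x_2)$ and $\N(x_3)$ from Lemma \ref{bd: Nx} make this pair the natural choice, and the bound on $a\gamma$ is what drives the exponent $4$); there is no genuine obstacle, as everything has been prepared by the preceding sections.
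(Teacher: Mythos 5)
Your proposal is correct and follows essentially the same route as the paper's proof: the same chain of inequalities $4\N(x_i)\N(x_j)\leq 4a\gamma x^2\leq (A^2-2B)^4$ via Lemmas \ref{bd: Nx} and \ref{bd: x and a}, then Lemma \ref{lemma: comm} forcing commutativity and Proposition \ref{prop: xi_non-comm} giving the contradiction. Your focus on the specific pair $(x_2,x_3)$ is a harmless (indeed slightly cleaner) refinement of the paper's ``for any $x_i,x_j$'' phrasing.
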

\begin{proof} By Proposition \ref{prop: embeddingR}, bad reduction gives a solution to the REP. This gives a solution to the IEP given by $\iota(\mu)=\Lambda_1$ and $\iota(\eta)=\Lambda_2$. The entries of $\Lambda_1$ are rationals, and the entries of $\Lambda_2$ are in $\Q(x_1,x_2,x_3)$, where $x_1,x_2,x_3\in\mathcal{R}$. Using the bounds in Lemma \ref{bd: Nx}, we have $\N(x_1)<x$, $\N(x_2)\leq ax$, and $\N(x_3)\leq \gamma x$. By Lemma \ref{bd: x and a}, for any $x_i,x_j \in \{x_1, x_2, x_3\}$, we obtain 
$$
4\N(x_i)\N(x_j) \leq 4 a\gamma x^2 \leq 4x^2 a^2 \left(A^2 - 2B - x^2 -2a\right) \leq x^2 \left(A^2 - 2B - x^2\right)^3 \leq \left(A^2 - 2B\right)^4.
$$ 
Therefore, if $p > \left(A^2 - 2B\right)^4$, by Lemma \ref{lemma: comm}, the quaternions $x_i$ commute. Thus, 
by Proposition~\ref{prop: xi_non-comm}, the field $K$ contains an imaginary quadratic field, which leads to a contradiction.
\end{proof}

\begin{remark} \label{remark: PrimeBdgen}
In practice, we use the much better bounds $4a\gamma f^4D^2$ and $p \leq 4a\gamma x^2$, respectively, which follow from the bounds in Lemma \ref{bd: Nx}, as stated in the proofs of Theorems \ref{thm: PrimeBdImagQuad} and \ref{thm: PrimeBdgen}. Moreover, before we apply \textbf{Algorithm \ref{Malg}} in Section~\ref{sec: MainAlgorithm}, we first eliminate potentially good primes using Corollary~\ref{cor: pot_good_primes}.
\end{remark}


\subsection{Lifting arguments}\label{sec: lifting arguments}

Given a tuple $[x,d/n,a,c/n,b,\gamma,n]$ satisfying 
 \begin{align}\label{eq:L1}
  \begin{aligned}
   0 & < x  < \sqrt{A^2 - 2B}  \\       \frac{c}{n} &= \frac{d}{n}x - a -B
  \end{aligned}
  &&
  \begin{aligned}
   0 & < a  \leq \frac{A^2 - 2B - x^2}{2} \\       b &= \frac{c}{n}x+\frac{d}{n}a +C
  \end{aligned}
  &&
  \begin{aligned}
   \frac{d}{n} &= A-x \\       n &= a\gamma - b^2,
  \end{aligned}
 \end{align}
where $\gamma = a\frac{c}{n} + b\frac{d}{n}$,
 we obtain an embedding $\iota_\mu:\,\mathbb{Z}[\mu]\hookrightarrow\calM_{3 \times 3}(\calR)$ as in \eqref{eq:iotamu}. We say that \textit{$\iota_\mu$ lifts to $\O_+$} if this embedding extends to an embedding of $\O_{+}$ into $\calM_{3 \times 3}(\mathcal{R}/n)$ as in Lemma~\ref{prop: embeddingI}. %
More specifically, we write a $\Z$ basis for $\O_+$ in terms of the basis elements $1, \mu, \mu^2$ of $\Z[\mu]$, say 1, $u_1$, $u_2$ is such a basis for $\O_+$. Then $\iota_\mu$ lifts to $\O_+$ if $\iota(u_1)$ and $\iota(u_2)$ are matrices in $\calM_{3 \times 3}(\mathcal{R}/n)$ with the entries of the top row in $\calR$.

Similarly, for a given a tuple $[x,d/n,a,c/n,b,\gamma,n, x_1, x_2, x_3]$ satisfying \eqref{eq:L1} and Proposition~\ref{prop: relations}, we say that $\iota$ \textit{lifts to} $\O$ if for a given $\Z$ basis $1, v_1, v_2, v_3, v_4, v_5$ of $\O$ (which is written in terms of $1, \eta, \eta^2, \eta^3, \eta^4, \eta^5$) we have $\iota(v_i) \in \calM_{3 \times 3}(\mathcal{R}/n)$ for all $i$ with the entries of the top row in $\calR$.

Lastly, in case $K$ contains an imaginary quadratic field $\Q(\sqrt{-D})$ we also have the following lift. For a given a tuple $[x,d/n,a,c/n,b,\gamma,n, d_1, d_2, d_3]$ satisfying \eqref{eq:L1} and Proposition \ref{prop: relations_d}, we say that $\iota$\textit{ lifts to} $\O \cap \Z[\sqrt{-D}]$ if for a given $\Z$ basis $1, w$ of $\O \cap \Z[\sqrt{-D}]$ (which is written in terms of $1, \sqrt{-D}$) we have $\iota(w) \in \calM_{3 \times 3}(\mathcal{R}/n)$ with the entries of the top row in $\calR$.

\subsubsection{Optimality}
When the index of $\mathcal{O}$ in $\mathcal{O}_K$ is not a power of $p$ we also need to check that the embedding~$\iota_0$ is optimal, see Proposition \ref{prop: embeddingR}.
\subsection{Formulas à la Lauter-Viray}

Following the idea in \cite[Proposition 3.1]{LV15b}, we compute the discriminant of an order contained in $\calB_{p, \infty}$. Since $p$ ramifies in a quaternion algebra $\calB_{p,\infty}$, it ramifies in any order contained in $\calB_{p,\infty}$. We denote by $\D(y)$ the discriminant of an element $y \in \calB_{p,\infty}$. In particular, we have 
$$\D(y) := \Tr(y)^2 - 4\N(y) \in \Z.$$
The following lemma is used to eliminate some outputs in Steps (5), (7), (9) of \textbf{Algorithm~\ref{sa2}}. 

\begin{lemma}\label{Lemma:calR} Let $y_1, y_2, y_3 \in \calB_{p,\infty}$ such that $\Tr(y_i)=0$ with $i \in \{1,2,3\}$. Let $\calR_{123} = \Z + \Z y_1 + \Z y_2 + \Z y_3$ and $\calR_{ij} = \Z + \Z y_i + \Z y_j + \Z y_iy_j^\vee$, where $y_i, y_j \in \{y_1, y_2, y_3\}$. Then $\disc(\calR_{ij}) =  \left(\D(y_iy_j)\right)^2$ and 
\begin{align}\nonumber
\disc(\calR_{123})  
& = 4\left ( 4\N(y_1)\N(y_2)\N(y_3) - \N(y_1)\left(\Tr(y_2y_3)\right)^2 - \N(y_2)\left(\Tr(y_1y_3)\right)^2 \right.\\  
&\left. -  \N(y_3)\left(\Tr(y_1y_2)\right)^2 - \Tr(y_1y_2) \Tr(y_1y_3)\Tr(y_2y_3) \right)\nonumber \\
& = \left(2 \Tr(y_1 y_2 y_3) \right)^2.\nonumber
\end{align}
\end{lemma}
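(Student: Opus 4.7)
The plan is to compute both discriminants directly via their Gram matrices under the trace pairing $\langle u, v \rangle = \Tr(u\bar v)$ on $\calB_{p,\infty}$. Since $\Tr(y_i)=0$, we have $\bar y_i = -y_i$ and $y_i^2 = -\N(y_i)$; the single quaternionic identity I will use repeatedly is
\[
y_i y_j + y_j y_i \;=\; \Tr(y_i y_j),
\]
which follows from $\overline{y_i y_j} = \bar y_j\bar y_i = y_j y_i$. Combined with cyclicity of the trace it gives $\Tr(y_i^2 y_j) = -\N(y_i)\Tr(y_j) = 0$, the workhorse simplification for both computations.

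First I would handle $\calR_{ij}$. On the basis $\{1, y_i, y_j, y_i y_j^\vee\} = \{1, y_i, y_j, -y_i y_j\}$, the identities above make most Gram-matrix entries vanish: the only nonzero off-diagonal entries are in positions $(0,3),(3,0)$ and $(1,2),(2,1)$, each equal to $-\Tr(y_i y_j)$, and the diagonal entries are $2, 2\N(y_i), 2\N(y_j), 2\N(y_i)\N(y_j)$. A Laplace expansion along the first row, using $\N(y_i y_j) = \N(y_i)\N(y_j)$, factors the determinant as $(4\N(y_i)\N(y_j) - \Tr(y_i y_j)^2)^2$, which is precisely $\D(y_i y_j)^2$.

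Next I would treat $\calR_{123}$ on the basis $\{1,y_1,y_2,y_3\}$. The Gram matrix splits as the block-diagonal sum of $(2)$ and the symmetric $3\times 3$ block $M'$ with diagonal $(2\N(y_1),2\N(y_2),2\N(y_3))$ and off-diagonal entries $-\Tr(y_iy_j)$. A direct expansion of $2\det(M')$ gives immediately the first claimed formula for $\disc(\calR_{123})$.

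The main obstacle is the second equality $\disc(\calR_{123}) = (2\Tr(y_1 y_2 y_3))^2$. The strategy here is algebraic: iterating $y_i y_j = \Tr(y_i y_j) - y_j y_i$ gives the identity
\[
y_1 y_2 y_3 + y_3 y_2 y_1 \;=\; \Tr(y_1 y_2)\, y_3 - \Tr(y_1 y_3)\, y_2 + \Tr(y_2 y_3)\, y_1,
\]
while $\overline{y_1 y_2 y_3} = -y_3 y_2 y_1$ yields $y_1 y_2 y_3 - y_3 y_2 y_1 = \Tr(y_1 y_2 y_3)$. Squaring the first identity and using the workhorse relation to collapse the cross terms, together with $y_i^2=-\N(y_i)$ for the diagonal terms, produces
\[
(y_1 y_2 y_3 + y_3 y_2 y_1)^2 \;=\; -\!\!\sum_{\{i,j,k\}=\{1,2,3\}}\!\! \N(y_i)\Tr(y_j y_k)^2 \;-\; \Tr(y_1 y_2)\Tr(y_1 y_3)\Tr(y_2 y_3).
\]
On the other hand, reducing $y_k^2 = -\N(y_k)$ inside the triple products gives $(y_1 y_2 y_3)(y_3 y_2 y_1) = (y_3 y_2 y_1)(y_1 y_2 y_3) = -\N(y_1)\N(y_2)\N(y_3)$. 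The parallelogram identity $(u+v)^2+(u-v)^2 = 2(u^2+v^2)$ applied to $u=y_1y_2y_3$, $v=y_3y_2y_1$ then expresses $\Tr(y_1 y_2 y_3)^2$ in the desired closed form, matching the first expression and completing the proof. The delicate point is sign bookkeeping in the squaring step, since the cross terms contribute the asymmetric combination $-\Tr(y_1 y_2)\Tr(y_1 y_3)\Tr(y_2 y_3)$ (rather than $+$).
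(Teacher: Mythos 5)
Your proposal is correct and is exactly the computation the paper has in mind: its proof consists of the single sentence that both results "follow from the definition of discriminant of an order in $\calB_{p,\infty}$," i.e.\ a direct evaluation of the Gram determinant of the trace form, which is what you carry out. Your verification of the Gram-matrix entries, the factorization for $\calR_{ij}$, and the identity $(2\Tr(y_1y_2y_3))^2=\disc(\calR_{123})$ via $y_1y_2y_3\pm y_3y_2y_1$ are all accurate (including the sign of the cross term $-\Tr(y_1y_2)\Tr(y_1y_3)\Tr(y_2y_3)$), and in fact supply details the paper leaves implicit.
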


\begin{proof}
Both results follow from the definition of discriminant of an order in $\calB_{p,\infty}$.
\end{proof}

\subsection{The algorithm}\label{sec: MainAlgorithm}

Following the previous discussion, we now state our Main Algorithm to compute solutions to the IEP. We take $\mathcal{O}$ an order in a sextic CM field $K$, $p$ a prime number and $\mathcal{R}$ a maximal order in the quaternion algebra $\mathcal{B}_{p,\infty}$.

\RestyleAlgo{ruled}
\LinesNumbered
\begin{algorithm}[ht]
  \caption{Main Algorithm\label{Malg}}
 \KwData{$\mathcal{O}$, $p$, $\mathcal{R}$}
 \KwResult{All \Int~solutions $\mathcal{O}\hookrightarrow\mathcal{M}_{3 \times 3}(\mathcal{R}/n)$ to the IEP up to equivalence.}
Compute $A,B,C$ as in Section \ref{ABC};

Compute all $[x,d/n,a,c/n,b,\gamma,n]$ with \textbf{Algorithm \ref{sa1}};

Eliminate solutions $[x,d/n,a,c/n,b,\gamma,n]$ which do not lift to $\O_+$ or are not optimal, using the arguments in~Section~\ref{sec: lifting arguments}; 

For each remaining $[x,d/n,a,c/n,b,\gamma,n]$, compute all $[\N(x_1), \N(x_2),\N(x_3),\Tr(x_1x_2),\Tr(x_1x_3),\Tr(x_2x_3)]$ with \textbf{Algorithm \ref{sa2}};

For all $\N(x_1), \N(x_2), \N(x_3)$ appearing in the solutions $[x,d/n,a,c/n,b,\gamma,n, \N(x_1), \N(x_2),\N(x_3),\Tr(x_1x_2),\Tr(x_1x_3),\Tr(x_2x_3)]$ computed above, find all $x_1, x_2, x_3 \in \mathcal{R}$ with trace 0 and specified norms; 

For each $[x,d/n,a,c/n,b,\gamma,n, \N(x_1), \N(x_2),\N(x_3),\Tr(x_1x_2),\Tr(x_1x_3),\Tr(x_2x_3)]$, use \textbf{Algorithm \ref{sa3}}, to find which $x_1, x_2, x_3$ found above also have the specified $\Tr(x_1x_2),\Tr(x_1x_3),\Tr(x_2x_3)$;

Eliminate solutions $[x,d/n,a,c/n,b,\gamma,n, x_1, x_2, x_3]$ which do not lift to $\O$ or are not optimal, using the arguments in~Section~\ref{sec: lifting arguments};

Check equivalence of solutions due to Proposition \ref{prop: equivforExA} with \textbf{Algorithm~\ref{sa4}};

Check equivalence of solutions due to Proposition \ref{Prop:EquivAutom}.
\end{algorithm}

\begin{remark} \label{remark: imaginary} 
If the CM field $K$ contains an imaginary quadratic field $\Q(\sqrt{D})$, then we may embed the cubic order $\OO \cap K_+$ and the quadratic order $\OO \cap \Q(\sqrt{D})$ instead of embedding the sextic $\OO$ order to get a \Int~IEP solution as in Section~\ref{sec:iqf}, i.e. we can describe the \Int~ solutions as in Remark \ref{remark: sol_imag}. This computation is more efficient since the norm and trace bounds are smaller for the coefficients $d_1, d_2, d_3$ of $\Lambda_3$ than the coefficients $x_1, x_2, x_3$ of $\Lambda_2$. The algorithm for this case is exactly the same as \textbf{Algorithm~\ref{Malg}}, except that in Steps 4--6 and 8, 9 we consider $d_1, d_2, d_3$ instead of $x_1, x_2, x_3$. In Step 7, we eliminate solutions which do not lift to $\OO\cap \Q(\sqrt{D})=\mathbb{Z}[f\omega]$. 
\end{remark}

We now develop each step:

\subsubsection{The triples $(A,B,C)$} \label{ABC}

\begin{enumerate}
\item If $K$ does not contain an imaginary quadratic subfield, then take a totally imaginary $\eta \in \O$ such that $\mu:=\eta^2$ is totally positive with $K = \Q(\eta)$ and compute its minimal polynomial $t^6 + At^4 + Bt^2 + C$.
\item If $K$ contains $k = \Q(\sqrt{D})$ with $D \in \Z_{<0}$, then take a totally positive element $\mu \in \O-\Z$ and compute the minimal polynomial $t^3 - At^2 + Bt - C$ of $\mu$.
\end{enumerate}
Note that we choose $\eta$ and $\mu$ such that the index $[\O_+ : \Z[\mu]]$ is small enough and, if possible, such that $p \nmid [\O_+ : \Z[\mu]]$. When the index gets larger, the number of IEP solutions increases and the algorithm spends more time to eliminate non-\Int~solutions. When $p \nmid [\O_+ : \Z[\mu]]$, we can invoke Proposition~\ref{prop: primitivity} and eliminate many solutions in the existence of the diagonal matrix. 

\subsubsection{The tuples $[x,d/n,a,c/n,b,\gamma,n]$}
Given $A,B$ and $C$, we determine all the possibilities for $\iota_0(\mu)$ for a \Int~solution.

\RestyleAlgo{ruled}
\LinesNumbered
\SetAlgoNoLine
\begin{algorithm}[H]
\caption{\label{sa1}}
  
 \KwData{$A,B,C$}
 \KwResult{ All $[x,d/n,a,c/n,b,\gamma,n]$ satisfying equations \eqref{eq:A}--\eqref{eq:C} and Lemma \ref{bd: x and a}.}
For {$x\in[1, \lfloor \sqrt{A^2-2B} \rfloor ]\cap\mathbb{Z}$;}{

	 Compute $\frac{d}{n} = A-x$ from \eqref{eq:A};

	For $a \in [1, \lfloor \frac{A^2-2B-x^2}{2} \rfloor ] \cap\mathbb{Z}$;

	Compute $\frac{c}{n} = \frac{d}{n}x - a -B$ from \eqref{eq:B};

	Compute $b = \frac{c}{n}x+\frac{d}{n}a +C$ from \eqref{eq:C};

	Compute $\gamma = \frac{c}{n}a+\frac{d}{n}b$ and check if $\gamma>b^2/a$;

	 Compute $n=a \gamma - b^2$.}
\end{algorithm}

 The complexity of the algorithm mainly depends on the size of $A=-\Tr_{K_+/\Q}(\mu)$.

\subsubsection{Norms and traces}\label{SS:NT} Given a certain positive integer $N$, a prime $p$, a quaternion order~$\calR$ given by an integral basis $B = M\begin{pmatrix} 1 & i & j & k \end{pmatrix}^T$, where $M$ is a $4 \times 4$ matrix with rational entries, then an element $y \in \calR$ can be written as $y = u b_{1} +  v b_{2} + w b_{3} + t b_{4}$, where $b_{r}$ is the $r$-th term in $B$ and $u, v,w,t \in \mathbb{Z}$. If $\Tr(y)=0$, then one of $u,v,w,t$, can be expressed in terms of the other three variables. Then, $\N(x)$ is a polynomial in the remaining three variables. Running through all integers modulo $p$ for each of the three variables, we can find all the values that $\N(y)$ can achieve modulo $p$. If $N$ belongs to this set of values modulo $p$, then we say that $N$ is \emph{achievable}. 

The system of equations in Proposition \ref{prop: relations} is rewritten in the following way which is convenient for the algorithm.
\begin{align}
\N(x_3) &=  \left(\frac{ac}{n} + bx\right)\left(x -\N(x_1)\right) + ab - \frac{c}{n}\N(x_2) + b\Tr(x_1x_2)\label{x_alg:eq1}\\
\Tr(x_2x_3) &= \left(\frac{ad}{n}  - ax - b\right)\left(x-\N(x_1)\right) - a^2 - \frac{d}{n} \N(x_2) - a\Tr(x_1x_2)\label{x_alg:eq2}\\
\Tr(x_1x_3) &= \left(\frac{d}{n}x - x^2 - a + \frac{c}{n}\right)\left(x-\N(x_1)\right) + \frac{ad}{n} - ax  - b + \N(x_2) + \left(\frac{d}{n}  - x\right)\Tr(x_1x_2)\label{x_alg:eq3}.
\end{align}

\RestyleAlgo{ruled}
\LinesNumbered
\begin{algorithm}
  \caption{\label{sa2}}
 \KwData{$p$ and $[x,d/n,a,c/n,b,\gamma,n]$}
 \KwResult{ All $[ x,d/n,a,c/n,b,\gamma,n, \N(x_1), \N(x_2), \N(x_3),  \Tr(x_1x_2),  \Tr(x_1x_3),  \Tr(x_2x_3) ]$ satisfying equations \eqref{x_alg:eq1} - \eqref{x_alg:eq3} and bounds in Lemmas \ref{bd: Nx}, \ref{Lemma:bdTrace} and \ref{Lemma:calR}.}
 For each achievable $\N(x_1) \in [0, x]$;

 For each achievable $\N(x_2) \in  \left[0, a \left(x-\N(x_1)\right)\right]$;
	
 For each $\Tr(x_1x_2) \in \left[ -\lfloor{2(\N(x_1)\N(x_2))^{1/2}}\rfloor, \lceil{2(\N(x_1)\N(x_2)^{1/2}}\rceil \right]$;
	
 If $ \N(x_1) = \Tr(x_1x_2) =0 $ or $ \N(x_2) = \Tr(x_1x_2) =0$ or $ \N(x_1)\N(x_2) \neq 0 $, compute $\D(x_1x_2)$;

 If $p \mid \D(x_1x_2)$, compute $\N(x_3)$ using \eqref{x_alg:eq1};
	
 If $\N(x_3) \in \mathbb{Z}_{\geq 0}$, the bound in Lemma \ref{bd: Nx} is satisfied, and $\N(x_3)$ and $N(x_1 + x_2) = \N(x_1)+\N(x_2)-\Tr(x_1x_2)$ are achievable, compute $\Tr(x_1x_3)$ using \eqref{x_alg:eq3};
 
 If $\N(x_1) = \Tr(x_1x_3) =0 $ or $\N(x_3) = \Tr(x_1x_3) =0 $ or $\N(x_1)\N(x_3) \neq 0$ , compute $\D(x_1x_3)$;
 
 If $\Tr(x_1x_3) \in \mathbb{Z}$, the bound in Lemma \ref{Lemma:bdTrace} holds for $\Tr(x_1x_3)$, $p \mid \D(x_1x_3)$, and $\N(x_1+x_3)=\N(x_1)+\N(x_3)-\Tr(x_1x_3)$ is achievable, and the bounds in Lemma \ref{bd: Nx} are satisfied, compute $\Tr(x_2x_3)$ using \eqref{x_alg:eq2} ;

 If $ \N(x_2) = \Tr(x_2x_3) =0 $ or $\N(x_3) =  \Tr(x_2x_3) =0$ or $\N(x_2)\N(x_3) \neq 0$, compute $\D(x_2x_3)$;
	
 If $\Tr(x_2x_3) \in \mathbb{Z}$, the bound in Lemma \ref{Lemma:bdTrace} holds for $\Tr(x_2x_3)$, $p \mid \D(x_2x_3)$, and $\N(x_2+x_3)=\N(x_2)+\N(x_3)-\Tr(x_2x_3)$ in achievable, compute $\disc(\calR_{123})$; 
 
 If $\disc(\calR_{123})$ is a square and $p \mid \disc(\calR_{123})$;
	
 If $\D(x_1x_2)=\D(x_1x_3)=\D(x_2x_3)=0$ and the solution comes from a solution $d_1, d_2, d_3$ with $p \mid fDn$, or $\D(x_1x_2), \D(x_1x_3), \D(x_2x_3)$ are not all $0$, then we have a solution 
 $$[x,d/n,a,c/n,b,\gamma,n, \N(x_1), \N(x_2), \N(x_3),  \Tr(x_1x_2),  \Tr(x_1x_3),  \Tr(x_2x_3) ].$$

\end{algorithm}

When working with $d_1$, $d_2$, and $d_3$, instead of $x_1$, $x_2$, and $x_3$, we modify the algorithm accordingly. 

\begin{remark} \label{remark: extra_autom}
If $\zeta_4 \in \O$ (or in general if the curve has extra automorphisms), we eliminate solutions with non-zero $d_2$ and $d_3$ by Corollary~\ref{cor: lambda3}.
\end{remark}

\subsubsection{Elements with a given norm} Given a prime $p$, a quaternion order $\calR$ in a quaternion algebra $\mathcal{B}_{p, \infty} = \left( \frac{-q,-p}{\mathbb{Q}} \right)$, where $q \in \mathbb{Z}_{> 0}$, and a certain non-negative integer $N$, we have developed an efficient algorithm to find all elements with norm $N$ that belong to $\calR$. In particular, as described at the beginning of Section \ref{SS:NT}, a quaternion order $\calR$ given by an integral basis $B = M\begin{pmatrix} 1 & i & j & k \end{pmatrix}^T$, where $M = (a_{ij})_{0\leq i,j \leq 3}$ is a $4 \times 4$ matrix with rational entries. Let $C = (c_{ij})_{0\leq i,j \leq 3}$ be the $4 \times 4$ matrix with $c_{ij}$ being the minor obtained by deleting row $i$ and column $j$ of $M$. Let $B_i = (q c_{i0}^2 + c_{i1}^2)p + (qc_{i2}^2 + c_{i3}^2)$, where $0 \leq i \leq 3$. An element $y \in \calR$ can be written as $y = u b_{1} +  v b_{2} + w b_{3} + t b_{4}$, where $b_{r}$ is the $r$-th term in $B$ and $u, v,w,t \in \mathbb{Z}$. Then, if $\N(y) = N$, we have the following bounds
$$
u \leq \frac{\sqrt{|B_0|N/(qp)}}{\det(M)}, \quad v \leq \frac{\sqrt{|B_1|N/(qp)}}{\det(M)}, \quad w \leq \frac{\sqrt{|B_2|N/(qp)}}{\det(M)}, \quad
t \leq \frac{\sqrt{|B_3|N/(qp)}}{\det(M)}.
$$
Instead of writing the algorithm with four nested loops, over $u,v,w,t$, we could use three nested loops and compute the last of the four variables as a solution to the quadratic equation $\N(y) = N$. However, we are only interested in elements $y$ with $\Tr(y) = 0$. Thus, we express one of the variables in terms of the other three, use two nested loops over two of the variables, and compute the last one as a solution to the quadratic equation $\N(y) = N$. This covers Step 5 of \textbf{Algorithm \ref{Malg}}. Then, we use the following.

\RestyleAlgo{ruled}
\LinesNumbered
\begin{algorithm} \caption{\label{sa3}}
 \KwData{$p$ and $[x,d/n,a,c/n,b,\gamma,n, N_1, N_2, N_3,  T_{12},  T_{13}, T_{23}]$, all $y_i \in \calR$ with $\Tr(y_i) = 0$ and $\N(y_i) \in \{N_1, N_2, N_3\}$}
 \KwResult{ All $[x,d/n,a,c/n,b,\gamma,n, x_1,x_2,x_3] $ satisfying equations \eqref{x_alg:eq1} - \eqref{x_alg:eq3}}

For $y_1 \in \calR$ with $\Tr(y_1)=0$ and $\N(y_1)=N_1$;
	
 For $y_2 \in \calR$ with $\Tr(y_2)=0$ and $\N(y_2)=N_2$;
	
 If $\Tr(y_1y_2)=T_{12}$;
	
 For $y_3 \in \calR$ with $\Tr(y_3)=0$ and $\N(y_3)=N_3$;
	
 If $\Tr(y_1y_3)=T_{13}$ and $\Tr(y_2y_3)=T_{23}$;
	
Return all $[x,d/n,a,c/n,b,\gamma,n, y_1, y_2, y_3]$.
\end{algorithm}

\subsubsection{Equivalence}\label{sa4} After computing all the solution with Algorithm \ref{sa3}, we use Propositions \ref{prop: equivforExA}, and \ref{Prop:EquivAutom} to obtain a set of equivalence classes.

We implemented these algorithms in SageMath~\cite{sage}, they can be found in \cite{IKLLMVCode}.

\section{Examples}\label{sec:comp} 

In Sections~\ref{sec: hyp_ex} and \ref{sec: nonhyp_ex}, we consider hyperelliptic and non-hyperelliptic curves of genus~3 defined over $\Q$ whose endomorphism rings over $\overline{\Q}$ are isomorphic to the maximal order in a sextic CM field. The conjectural equations for these curves are given in \cite{IKLLMMV_mod, KLLRSS} and the correctness of some of these curves is verified by J. Sijsling with the algorithm in \cite{CMSV}. 
We compute the number of solutions to the embedding problems for the maximal orders corresponding to these curves with the algorithm developed in Section~\ref{sec:alg}. Then we compare the primes for which we find solutions and the primes dividing the \emph{minimal} discriminant of the curves. 

Each of these CM fields contains an imaginary quadratic field. We computed the \Int~IEP solutions for the cubic and the quadratic maximal orders and also computed for the sextic maximal order (see Remark~\ref{remark: imaginary}). In both cases, we obtained the same number of  \Int~IEP solutions.

In Section~\ref{sec: nonhyp_ex2}, we consider non-hyperelliptic curve examples with CM by the maximal order in a sextic CM field not containing an imaginary quadratic field. In these cases we do not have curve equations, but we compute the norms of the \emph{minimal} discriminants by computing the theta constants of period matrices with high precision. Then we compare the primes for which we find solutions and the primes dividing the minimal discriminants. 

In all examples, we also experiment with different parametrizations $(A,B,C)$  of the fields and obtained compatible outputs.

\subsection{Hyperelliptic CM curves of genus $3$ defined over the rationals} \label{sec: hyp_ex}

Up to isomorphism there are~8 hyperelliptic curves of genus 3 over $\Q$ with CM  by a maximal order, see \cite[Proposition~4.3.12, Table 3.1]{Kilicerthesis}; the equations for these curves are given in \cite[Section~5.2]{IKLLMMV_mod}. 
By running \textbf{Algorithm \ref{Malg}} for the maximal orders corresponding to these curves, we obtain Table~\ref{table: cubicANDimagfields}. 

\begin{table}[h!]
 \caption{Hyperelliptic curves of genus 3 defined over $\mathbb{Q}$. See \ref{not71} for the explanation of the notation.} \label{table: cubicANDimagfields} 
\centering
\begin{tabular}{|c|c|c|c|c|c|c|}
\hline
Curve & $(A,B,C)$     	& $I$ & $D$  & $D_{14}^{\text{min}}$  & $\# \left\{ \text{\Int~IEP solns.} \right\}$ & $\text{Bd. } p$ \\ 
\hline
$(1)$  & $(13,50,49)$  	& $1$  				& $-1$ 	    & $7^{24}\cdot 11^{12}$ & $7 \cdot 11$ & 11\\ 
\hline
$(2)$  & $(6,9,1)$     		& $1$  				& $-1$ 	    & $3^{8}$ 			 & $3$ & 3 \\ 
\hline
$(3)$  & $(5,6,1)$     		& $1$   				& $-1$ 	    & $1$ 				 & $1$ & 1\\ 
\hline
$(4)$  & $(7,14,7)$    	& $1$  				& $-7$ 	    & $1$ 				 & $1$ & 1 \\ 
\hline
$(5)$  &$(12,27,15)$ 	& $1$  				& $-7$ 	    & $3^8\cdot5^{24}$   & $3^2 \cdot 5^2 \cdot 7^2 $ & 127\\ 
\hline
$(6)$  & \multirow{2}{*}{$(10,19,2)$}  		& \multirow{2}{*}{$2$}   				& \multirow{2}{*}{$-1$} 	    & $2^{?}\cdot11^{24}$ & \multirow{2}{*}{$2^7 \cdot 11^2 \cdot 47$} & \multirow{2}{*}{$47$}\\ 
 $(6)^*$ & 			&  					&   		    & $2^?\cdot 11^{12}\cdot 47^{24}$ & & \\ 
 \hline
$(7)$ & \multirow{2}{*}{$(11,30,16)$} 	& \multirow{2}{*}{$2$} 				& \multirow{2}{*}{$-1$} 	    & $2^?$ 			& \multirow{2}{*}{$2^5 \cdot 23$} & \multirow{2}{*}{$23$}\\ 
$(7)^*$ & 				& 	 	    			& 		    & $2^?\cdot 23^{24}$ 	& & \\ 
\hline
$(8)$  & \multirow{2}{*}{$(12,27,8)$}  		& \multirow{2}{*}{$2$}  				& \multirow{2}{*}{$-1$} 	    & $2^?\cdot3^8 $  		& \multirow{2}{*}{$2^{11} \cdot 3^4 \cdot 7^2 \cdot 31 \cdot 47 \cdot 79$} & \multirow{2}{*}{$ 79$} \\ 
$(8)^*$  &  			&   					&  	    	    & $2^?\cdot3^?\cdot 7^?\cdot 31^{12}\cdot 47^{24}\cdot 79^{24}$  & & \\ 
\hline
\end{tabular}
\end{table}
\subsubsection{Notation of  Table \ref{table: cubicANDimagfields}}\label{not71}
\begin{itemize}
\item The numbering of the hyperelliptic curves corresponds to the CM fields with the same numbering in \cite{IKLLMMV_mod}.  
\item Each of these sextic CM fields is the compositum of a totally real cubic field $K_+$ generated by the polynomial $t^3 - At^2 + Bt -C$ and an imaginary quadratic field $k=\Q(\sqrt{D})$, where $D\in \Z_{<0}$ is square-free. Note that the coefficients in the second column do not necessarily match the ones in \cite{IKLLMMV_mod}. 
\item  We define  $I := [\O_{K_+} : \Z[\mu]]$.
\item  CM fields (6)--(8) have $h_K/h_{K_+} = 4$. Hence, by \cite[Theorem 4.3.1]{Kilicerthesis}, there are $4$ curves with CM by the maximal orders of these CM fields; one is defined over $\Q$ and the other three are defined over $K_+$. The curves defined over $K_+$ are Galois conjugates and represented by the cases marked with $^*$.
\item The entries in the column $D_{14}^{\text{min}}$ are the norms  of the minimal discriminant of the curves:
By minimal discriminant we understand the product of primes appearing in the discriminant of any model with the normalized exponent using a Homogeneous System Of Parameter (HSOP) as in \cite[Definition~1.6]{LLLR}.
Shioda invariants form a HSOP for hyperelliptic curves for primes greater than $7$ and for small characteristic we used the HSOPs proposed in the literature~\cite{LRS23,LRS-CodeH}. For $p=2$ an HSOP is not known, so we could not compute the exact valuation in some cases and we wrote ``$?$" for it. 
\item The exponents of the primes in the column ``\#\{\Int~IEP solns\}" correspond to the number of embeddings up to equivalence in a multiplicative way for the primes appearing: $p^e$ means that there are $e$ of \Int~IEP solutions for the prime $p$.
\item The last column gives a bound for primes for which we run \textbf{Algorithm \ref{Malg}}. The bounds are given in Theorems \ref{thm: PrimeBdQ(i)} and \ref{thm: PrimeBdImagQuad}.
\end{itemize}

\subsubsection{Interpretation of results in Table \ref{table: cubicANDimagfields}}

\begin{itemize}
\item All curves except (5) have extra automorphisms; in cases (1)--(3) and (6)--(8) we have $i\in \O_K$ and in case (4) we have $\zeta_7 \in \O_K$. For these cases we eliminate solutions also with non-zero $d_2$ and $d_3$, see Remark~\ref{remark: extra_autom}.
\item When a prime $p$ divides $[\O_{K_+} : \Z[\mu]]$, we observe that we sometimes obtain extra solutions with~$p$ dividing $n$, so we cannot a priori rule out these solutions because we do not know how to detect the primitivity of the CM type modulo $p$ in this situation, see Proposition \ref{prop: primitivity}. By working with a different $(A,B,C)$, we usually get to discard them by finding an equivalent solution that can be discarded. 
\item We observe the same phenomenon with primes $p$ dividing $D$, for example, for case $(5)$ in Table~\ref{table: cubicANDimagfields} there are two unexpected solutions at $7$. This seems to be related with the fact that $7$ divides $D$ in this case, and we cannot control the CM type in this situation, see Proposition \ref{prop: primitivity}. Except for this case our algorithm outputs solutions exactly for the primes of bad reduction. 

\item The exponents in the sixth column of Table~\ref{table: cubicANDimagfields} are not related to the exponents in the discriminant.
The meaning of the exponents of the last columns are related with the number of different reductions of curves with CM by the corresponding order at a prime $p$. This only explains the exponent $1$ at $7$ and $11$ for case (1): there is only one curve with CM by the maximal order of the first field and with bad reduction at each prime. 
The exponent $2$ at $p=11$ in case (6) makes reference to the fact that there are (at most) two types of bad reduction at $11$: the one of the curve $(6)$ and the one of the three conjugated curves in case~$(6)^*$. \\
In case (7), there are (at most) four types of bad reduction at $2$: the one of the curve $(7)$ and the three of the three conjugated curves in case $(7)^*$. Since $2$ totally split in $K_+$ the reduction of these three curves at a prime dividing $2$ are a priori non-isomorphic. However, we obtain an exponent of~$5$, which means that we may be overcounting when $p$ divides the index $[\O_{K_+} : \Z[\mu]]$. The same thing may be happening for the prime $2$ in cases (6) and (8). \\
Also in case (5) we get some overcounting at $p=5$, but the reason here seems to be that $p\mid n$ in one of the solution we find, we suspect this extra solution corresponds to non-primitive CM-type.
\end{itemize}

\subsection{Non-hyperelliptic CM curves of genus $3$ defined over the rationals}\label{sec: nonhyp_ex} 

Up to isomorphism there are~$29$ isomorphism classes of non-hyperelliptic curves of genus $3$, i.e., plane quartics, defined over $\Q$ with CM by a maximal order, \cite[Proposition~4.3.12, Table 3.1]{Kilicerthesis}; the equations for the corresponding curves are given in \cite[Section~5]{KLLRSS}.
In this section, we consider some interesting cases of these plane quartic curves, namely $X_1$, $X_3$, $X_5$, $X_6$, $X_7$, $X_8$, $X_9$, $X_{10}$, $X_{11}$, $X_{12}$ and $X_{17}$ numbered as in \cite{KLLRSS}. For non-hyperelliptic curves of genus $3$, the primes dividing the discriminant are either primes of potential good reduction, bad reduction or hyperelliptic reduction.  In \cite{LLLR}, the reduction types for plane quartics are listed, however, in some cases, the criteria given in the paper cannot be used to decide the reduction types modulo $2$, $3$ or $7$. We will look into these cases in this section.

\begin{landscape}
\begin{table}[htp]
 \caption{Some non-hyperelliptic curves of genus 3 defined over $\mathbb{Q}$.  See \ref{not72} for the explanation of the notation.} \label{table: planequarticfields} 
\centering 
\begin{tabular}{|c|c|c|c|c|c|c|c|}
\hline
Curve & $(A,B,C)$     	& $I$    & $D$   & $I_{27}^{\text{min}} $ & $\# \left\{ \text{\Int~IEP solns.} \right\}$ & Bd. $p$ & Ran up to\\ 
\hline
 $X_1$   & $(7,12,5)$ 	& $1$ 				& $-7$    & 	$-2^{70}\cdot 5^{12}\cdot 7^9 \cdot  37^{14} \cdot 15187^{14}$	 & $5 \cdot 7^2$ & 2484	 & 2484 \\
\hline
\multirow{2}{*}{$X_3$}    & \multirow{2}{*}{$(20,61,15)$}	& \multirow{2}{*}{$3$}                   		& \multirow{2}{*}{$-7$}    & \multirow{2}{*}{$2^{42}\cdot3^{18}\cdot5^{36}\cdot7^{7}\cdot233^{14}\cdot356399^{14}$} & \multirow{2}{*}{$3^{10} \cdot 5^4 \cdot 7^6 \cdot 17 \cdot ?$} & \multirow{2}{*}{68266800}	&  
$22000$ \& \\
&&&&&&& $p = 356399$\\  
\hline
$X_5$    & $(16,55,27)$	& $3$                    		& $-7$    & $2^{42}\cdot3^{24}\cdot7^{7}\cdot37^{14}\cdot127^{14}$ & $3^2 \cdot 7^2 \cdot ?$ & 4105728	 &  1000000		\\ 
\hline
$X_6$    & $(19,76,57)$	& $3$                    		& $-7$    & $2^{42}\cdot3^{24}\cdot7^{7}\cdot17^{12}\cdot127^{14}\cdot211^{14}\cdot20707^{14}$ & $3^4 \cdot 7^4 \cdot 17 \cdot ?$& 19411840	 & 22000		\\ 
\hline 
$X_7$    & $(16,61,3)$	& $3$                    		& $-7$    & $- 2^{42} \cdot 3^{18} \cdot 7^7 \cdot 71^{14} \cdot 83^{12} \cdot 17665559^{14}$ & $3^{17} \cdot 7^{12}  \cdot 17^7 \cdot 83 \cdot 167 $& 1096200	 &  1096200 \\ 
\hline 
$X_8$    & $(8,15,7)$	& $1$                    		& $-7$    & $2^{46} \cdot 7^{15} \cdot 499^{14}$ & $7^3$& 10800	 & 10800 		\\ 
\hline  
$X_9$    & $(7,12,5)$	& $1$                    		& $-2$    & $-2^{45}\cdot5^{12}\cdot7^{14}\cdot79^{14}\cdot233^{14}\cdot857^{14}$  &  $2^2 \cdot 5$ & 1680 &  1680 \\ 
\hline
$X_{10}$ & $(5,6,1)$		& $1$                    		& $-2$    & $ -2^{45}\cdot7^{14}\cdot41^{14}\cdot71^{14}$ & $2$ & 160	 &  160 \\ 
\hline
$X_{11}$ & $(13,46,32)$		& $2$                    		& $-2$    & $ - 2^{85} \cdot 7^{14} \cdot 23^{14} \cdot 47^{14} \cdot 27527^{14}$ & $2^5 \cdot 23^7$ & 28480	 & 28480 \\ 
\hline
$X_{12}$ & $(5,6,1)$		& $1$                    		& $-11$  & $7^{14}\cdot11^{9}\cdot5711^{14}\cdot73064203493^{14}$ & $11$ & 160 & 160 \\ 
\hline
$X_{17}$ & $(18,51,26)$	& $2$                    		& $-19$  & $2^{24}\cdot3^{50}\cdot19^{7}\cdot1229^{14}\cdot3913841117^{14}$  & $2^5 \cdot 3^{10} \cdot 19^2 \cdot ?$& 32292864	 &  6000	\\ 
\hline  
 \end{tabular}
 \end{table}

\subsubsection{Notation of  Table \ref{table: planequarticfields}}\label{not72}
\begin{itemize}
\item The numbering and notation for the non-hyperelliptic curves $X_j$ and their CM fields as in~\cite{KLLRSS}.
\item The columns $(A,B,C)$, $I$, $D$ and $\#\{\Int~\text{IEP sols.}\}$ are defined exactly as in Table~\ref{table: cubicANDimagfields}. Note that the coefficients in the second column do not necessarily match the ones in \cite{KLLRSS}. 
\item $I_{27}^\text{min}$ is the minimal discriminant. In this case, by minimal discriminant we understand the product of primes appearing in the discriminant of any model with the normalized exponent using an HSOP as in \cite[Definition~1.6]{LLLR}. Note that in \cite{KLLRSS} this notation is used with a different meaning. Dixmier-Ohno invariants contain an HSOP for primes $>7$, see \cite[Theorem~4.1]{LLLR}. For smaller primes we need to look for different HSOP's, see \cite{LRS23,LRS-CodeQ}. 
\item The column ``Bd. $p$" is the bound for the primes of bad reduction as given in Remark~\ref{remark: PrimeBdgen}. 
The last column gives a bound for primes for which we run \textbf{Algorithm \ref{Malg}}. In the cases that we did not run \textbf{Algorithm \ref{Malg}} for each prime up to the bound in ``Bd. $p$", we write $?$ in the number of solutions. Note that even if we do not always run the algorithm up to the bounds in ``Bd. $p$", we check all primes dividing the discriminant $I_{27}^\text{min}$.
\end{itemize}

\end{landscape}

\subsubsection{Interpretation of results in Table \ref{table: planequarticfields}}
\begin{itemize}
\item The CM fields corresponding to the rational curves $X_7$, $X_8$, $X_{11}$ have relative class number 4. Hence, by \cite[Theorem 4.3.1]{Kilicerthesis} for each CM field $K$ there are four curves of genus 3 with the maximal order $\OO_K$; one is defined over $\Q$ and the other three are defined over $K_+$. The curves defined over $K_+$ are Galois conjugates. As we can see in case $X_7$ there are \Int~IEP solutions for primes $17$ and $167$ even though the minimal discriminant of $X_7$ is not divisible by these primes. One explanation for this could be that $17$ and~$167$ are primes of bad reductions for the conjugate curves defined over $K_+$. However, this would not explain the solution for $p=17$ for the CM field of curve $X_3$: of course, it could happen that there are solutions not providing CM curves and bad reduction, but our guess here is that there is a CM curve with CM by the field but with non-primitive CM-type and bad reduction at $17$. Unfortunately, at the moment nobody have computed curves equations for non-primitive sextic CM fields, so we cannot check this hypothesis.

\item The index $[\O_{K_+} : \Z[\mu]]$ is divisible by 3 in cases $X_3$, $X_5$, $X_6$ and $X_7$, which may lead to extra \Int~IEP solutions for prime $3$. As stated in \cite[Table 4]{LLLR}, prime $3$ is a prime of bad reduction in each of these cases. Thus, at least some of the solutions we find are really coming from bad reduction.
\item The prime 7 divides $D$ for in case $X_1$, which may lead to extra \Int~IEP solutions, as in case (5) in Table \ref{table: cubicANDimagfields}. As stated in \cite[Table 4]{LLLR}, prime $7$ is a prime of bad reduction for $X_1$. Thus, at least some of the solutions we find are really coming from bad reduction.
\item The criteria in~\cite{LLLR} cannot be used to decide the reduction type of $X_{17}$ at $3$. Since we found a solution at $3$ and $3$ does not divide $D$ or $[\O_{K_+} : \Z[\mu]]$, we suspect that $3$ is indeed a prime of bad reduction.
\item As shown in~\cite[Table 4]{LLLR}, all primes appearing in the minimal discriminant
with exponent $14$ are primes of hyperelliptic reduction. We did not find solutions to the embedding problem for any of these primes, which confirms that that they are not primes of bad reduction. 
\end{itemize}

\begin{proposition} \label{prop:hyperred}
The curves $X_{9}, X_{11}, X_{10}$ and $X_{12}$ have potentially good hyperelliptic reduction at $p=7$.
\end{proposition}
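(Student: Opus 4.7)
The plan is to combine two observations drawn from Table~\ref{table: planequarticfields}: first, that $p = 7$ appears in the minimal discriminant $I_{27}^{\min}$ of each of the four curves with exponent exactly $14$; second, that Algorithm~\ref{Malg} finds no \Int~IEP solutions at $p = 7$ for any of the four corresponding CM orders. The shape $p^{14}$ in the minimal discriminant will tell us that $7$ cannot be a prime of smooth plane quartic reduction, and the absence of IEP solutions will rule out bad reduction, leaving only potential good hyperelliptic reduction.

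First, I would invoke \cite[Table~4]{LLLR}: the exponent $14$ at a prime in the minimal discriminant of a plane quartic is the signature of either hyperelliptic or bad reduction. For each of $X_9, X_{10}, X_{11}, X_{12}$, the exponent of $7$ in $I_{27}^{\min}$ is indeed $14$, so the stable reduction at any prime $\frakp$ of $\O_M$ above $7$ is either a smooth hyperelliptic curve of genus $3$ or falls into case (2) or (3) of Proposition~\ref{reductionResult}.

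Next, I would take the contrapositive of Corollary~\ref{badred-IEP}: the absence of any \Int~IEP solution for $(\O_K, p)$ forces the curve to have no geometric bad reduction at any prime of $\O_M$ above $p$. For each of our four curves, the column $\#\{\text{\Int~IEP solns.}\}$ of Table~\ref{table: planequarticfields} is coprime to $7$, and Algorithm~\ref{Malg} has been run up to a bound comfortably larger than $7$. Combined with the previous step, this excludes cases (2) and (3) of Proposition~\ref{reductionResult} and leaves only potential good hyperelliptic reduction.

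The main point requiring care is ensuring that the algorithmic non-existence of IEP solutions at $7$ is genuine, rather than an artifact of one of the pathologies noted in Section~\ref{sec: hyp_ex} (such as $p \mid D$ or $p \mid [\O_{K_+} : \Z[\mu]]$, which can yield spurious solutions coming from non-primitive CM types). Here we are in good shape: $7$ divides neither $D$ (which is $-2$ for $X_9, X_{10}, X_{11}$ and $-11$ for $X_{12}$) nor the index $[\O_{K_+} : \Z[\mu]]$ (which is $1$ in three cases and $2$ in the remaining one), so Proposition~\ref{prop: primitivity} applies cleanly and no spurious or non-primitive solutions can escape detection at $7$. With this, the three ingredients combine to give potential good hyperelliptic reduction at $7$ for each of the four curves.
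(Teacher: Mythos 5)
Your proof is correct and follows essentially the same route as the paper's, which simply notes that the algorithm outputs no IEP solutions at $p=7$ for the corresponding fields and invokes the contrapositive of Proposition~\ref{prop: embeddingI}; you additionally make explicit the (correct) point that the exponent $14$ of $7$ in $I_{27}^{\text{min}}$ identifies $7$ as a prime of hyperelliptic rather than smooth plane-quartic reduction, which the paper leaves implicit. One small remark: the caution in your final paragraph is unnecessary, since the pathologies at primes dividing $D$ or $[\O_{K_+}:\Z[\mu]]$ only ever produce \emph{extra} (spurious) IEP solutions and never suppress genuine ones, so an empty output at $p=7$ rules out bad reduction unconditionally.
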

\begin{proof} The algorithm does not output any solution for the corresponding fields at $p=7$ and bad reduction would produce solutions according to Proposition \ref{prop: embeddingI}.
\end{proof}

\subsection{Examples of sextic CM field examples with no imaginary quadratic subfield} \label{sec: nonhyp_ex2}

In this section, we will be interested in the maximal orders of sextic CM fields with no imaginary quadratic subfield and in the curves with CM by these maximal orders. We list some examples of such fields in Table \ref{table: EmbeddingsGeneric}; all of these fields have class number one. 

For each CM field $K$ in Table \ref{table: EmbeddingsGeneric}
there are four CM curves of genus 3, up to isomorphism and they are defined over the totally real subfields of the corresponding reflex fields.

Numerical computation with high precision shows that the even theta constants of these curves are non-zero. Therefore, by \cite{igusa67}, we conclude that these curves are plane quartics.

\begin{table}[htp] 
\caption{Some sextic CM fields not containing an imaginary quadratic field. See \ref{not73} for the explanation of the notation.}  \label{table: EmbeddingsGeneric} 
\centering
\begin{tabular}{|c|c|c|c|c|c|c|c|c|}
\hline
Curve  & $(A,B,C)$      & $I$   & $I^\text{min}_{27}$ & $\# \left\{ \text{\Int~IEP sols.} \right\}$ & Bd. $p$ & Ran up to\\ \hline
$G_1$  & $(15,14,3)$   & $1$   & $\mathfrak{p}_9^{18} \mathfrak{p}_5^3 \mathfrak{p}_{1187}^{14}$  & $2^3 \cdot 5 \cdot ?$ &23080960 & 1367 \\ \hline 
$G_2$  & $(11,34,31)$  & $1$  & $\mathfrak{p}_8^{20} \mathfrak{p}_3^{18}\mathfrak{p}_{27}^{32} \mathfrak{p}_{1373}^{14}$ &$3^2$ & 31680 & 31680 \\ \hline 
$G_3$  & $(7,10,2)$     & $1$   & $\mathfrak{p}_2^{32}\mathfrak{p}_{3}^{18}\mathfrak{p}_7^{12}$    & $2^2\cdot 7$ &6660 &6660 \\ \hline 
$G_4$     & $(9,21,8)$     & $1$   & $\mathfrak{p}_3^{18}$ & 1 &7920 &7920\\ \hline 
$G_5$     &$(10,29,23)$  & $1$   & $\mathfrak{p}_8^{20}\mathfrak{p}_3^{18}\mathfrak{p}^9_9\mathfrak{p}_{47}^{14} \mathfrak{p}_{73}^{14}$      & 1&8208 &8208 \\ \hline 
 \end{tabular}
\end{table}

\subsubsection{Notation of  Table \ref{table: EmbeddingsGeneric}}\label{not73}
\begin{itemize}
\item Each of these sextic CM fields is generated by the polynomial $t^6 + At^4 + Bt^2 + C$.

\item The columns $(A,B,C)$, $I$, $D$, $\#\{\Int~\text{IEP sols.}\}$ and Bd. $p$ are defined as in Table~\ref{table: cubicANDimagfields}. 

\item The columns $I_{27}^\text{min}$ and ``Ran up to" are defined as in Table~\ref{table: planequarticfields}. The notation $\mathfrak{p}_m$ denotes a prime ideal of norm $m$.

\end{itemize}

As we discussed earlier, we can read the following facts from Table~\ref{table: EmbeddingsGeneric}.
The curves corresponding to $G_1$ and $G_3$ respectively have potentially good hyperelliptic reduction at $p=1187$ and $3$ respectively. 
The curves corresponding to $G_2$ have potentially good hyperelliptic reduction at $2$ and $1373$. The curves corresponding to $G_4$ and $G_5$ have potentially good reduction everywhere.

\bibliographystyle{abbrv}
\bibliography{sample}

\begin{thebibliography}{10}

\bibitem{AEPicard}
S.~Arora and K.~Eisentr\"{a}ger.
\newblock Constructing {P}icard curves with complex multiplication using the
  {C}hinese remainder theorem.
\newblock In {\em Proceedings of the {T}hirteenth {A}lgorithmic {N}umber
  {T}heory {S}ymposium}, volume~2 of {\em Open Book Ser.}, pages 21--36. Math.
  Sci. Publ., Berkeley, CA, 2019.

\bibitem{BILV}
J.~S. Balakrishnan, S.~Ionica, K.~Lauter, and C.~Vincent.
\newblock Constructing genus-3 hyperelliptic {J}acobians with {CM}.
\newblock {\em LMS J. Comput. Math.}, 19(suppl. A):283--300, 2016.

\bibitem{Barretto}
P.~S. L.~M. Barreto, B.~Lynn, , and M.~Scott.
\newblock Constructing elliptic curves with prescribed embedding degrees.
\newblock In {\em Security in Communication Networks — Third International
  Conference, SCN 2002}, Lecture Notes in Computer Science, vol. 2576,
  Springer-Verlag, page 257–267, 2003.

\bibitem{DIS}
B.Dina, S.~Ionica, and J.~Sijsling.
\newblock Isogenous hyperelliptic and non-hyperelliptic {Jacobians} with
  maximal complex multiplication.
\newblock {\em Mathematics of {Computation}}, 92:349--383, 2023.

\bibitem{BBEL}
J.~Belding, R.~Broker, A.~Enge, and K.~Lauter.
\newblock Computing {H}ilbert class polynomials.
\newblock In {\em Algorithmic number theory}, volume 5011 of {\em Lecture Notes
  in Comput. Sci.}, pages 282--295. Springer, Berlin, 2008.

\bibitem{BCLLMNO}
I.~Bouw, J.~Cooley, K.~E. Lauter, E.~Lorenzo~Garc{\'\i}a, M.~Manes, R.~Newton,
  and E.~Ozman.
\newblock Bad reduction of genus $3$ curves with complex multiplication.
\newblock In {\em Women in Numbers Europe}, volume~2 of {\em Association of
  Women in Mathematics}, pages pp. 57--82, 2015.

\bibitem{BrezingWeng}
F.~Brezing and A.~Weng.
\newblock Elliptic curves suitable for pairing based cryptography.
\newblock {\em Designs, Codes and Cryptography}, 37(1):133–141, 2005.

\bibitem{Broker}
R.~Broker.
\newblock A {$p$}-adic algorithm to compute the {H}ilbert class polynomial.
\newblock {\em Math. Comp.}, 77(264):2417--2435, 2008.

\bibitem{BruinierYang}
J.~H. Bruinier and T.~Yang.
\newblock C{M}-values of {H}ilbert modular functions.
\newblock {\em Invent. Math.}, 163(2):229--288, 2006.

\bibitem{ChaiConradOort}
C.-L. Chai, B.~Conrad, and F.~Oort.
\newblock {\em Complex multiplication and lifting problems}, volume 195 of {\em
  Mathematical Surveys and Monographs}.
\newblock American Mathematical Society, Providence, RI, 2014.

\bibitem{CMSV}
E.~Costa, N.~Mascot, J.~Sijsling, and J.~Voight.
\newblock Rigorous computation of the endomorphism ring of a {J}acobian.
\newblock 2017.

\bibitem{DI}
B.~Dina and S.~Ionica.
\newblock Genus 3 hyperelliptic curves with cm via shimura reciprocity.
\newblock In {\em Proceedings of the {Fourteenth Number Theory Symposium}},
  volume~4 of {\em Open Book Series}, 2020.

\bibitem{Morain}
R.~Dupont, A.~Enge, and F.~Morain.
\newblock Building curves with arbitrary small mov degree over finite prime
  fields.
\newblock {\em Journal of Cryptology}, 18(2):79–89, 2005.

\bibitem{GorenLauter07}
E.~Z. Goren and K.~E. Lauter.
\newblock Class invariants for quartic {CM} fields.
\newblock {\em Ann. Inst. Fourier (Grenoble)}, 57(2):457--480, 2007.

\bibitem{GL11}
E.~Z. Goren and K.~E. Lauter.
\newblock Genus 2 curves with complex multiplication.
\newblock {\em Int. Math. Res. Notices}, 2011.
\newblock Published online April 12, 2011, doi:10.1093/imrn/rnr052.

\bibitem{GrossZagier}
B.~H. Gross and D.~B. Zagier.
\newblock On singular moduli.
\newblock {\em J. Reine Angew. Math.}, 355:191--220, 1985.

\bibitem{igusa67}
J.-i. Igusa.
\newblock Modular forms and projective invariants.
\newblock {\em Amer. J. Math.}, 89:817--855, 1967.

\bibitem{IKLLMMV_mod}
S.~Ionica, P.~K{\i}l{\i}\c{c}er, K.~Lauter, E.~Lorenzo~Garc\'{\i}a,
  A.~M\^{a}nz\u{a}\c{t}eanu, M.~Massierer, and C.~Vincent.
\newblock Modular invariants for genus 3 hyperelliptic curves.
\newblock {\em Res. Number Theory}, 5(1):Paper No. 9, 22, 2019.

\bibitem{IKLLMVCode}
S.~Ionica, P.~K{\i}l{\i}\c{c}er, K.~Lauter, E.~Lorenzo~Garc\'{\i}a,
  A.~M\^{a}nz\u{a}\c{t}eanu, M.~Massierer, and C.~Vincent.
\newblock The embedding problem.
\newblock \url{https://github.com/AdelinaManzateanu/the-embedding-problem},
  2021.

\bibitem{Kani}
E.~Kani.
\newblock Products of {CM} elliptic curves.
\newblock {\em Collect. Math.}, 62(3):297--339, 2011.

\bibitem{Kilicerthesis}
P.~K{\i}l{\i}\c{c}er.
\newblock {\em The {CM} class number one problem for curves}.
\newblock PhD thesis, Leiden University and University of Bordeaux, 2016.

\bibitem{KLLRSS}
P.~K\i{}l\i{}\c{c}er, H.~Labrande, R.~Lercier, C.~Ritzenthaler, J.~Sijsling,
  and M.~Streng.
\newblock Plane quartics over {$\Bbb{Q}$} with complex multiplication.
\newblock {\em Acta Arith.}, 185(2):127--156, 2018.

\bibitem{KLLNOS}
P.~K{\i}l{\i}\c{c}er, K.~E. Lauter, E.~Lorenzo~Garc\'ia, R.~Newton, E.~Ozman,
  and M.~Streng.
\newblock A bound on the primes of bad reduction for {CM} curves of genus 3.
\newblock {\em Proc. Amer. Math. Soc.}, 148(7):2843--2861, 2020.

\bibitem{KLS}
P.~K{\i}l{\i}\c{c}er, E.~Lorenzo~Garc\'ia, and M.~Streng.
\newblock Primes dividing invariants of cm picard curves.
\newblock {\em Canadian Journal of Mathematics}, 72(2):480--504, 2020.

\bibitem{KoikeWeng}
K.~Koike and A.~Weng.
\newblock Construction of {CM} {P}icard curves.
\newblock {\em Math. Comp.}, 74(249):499--518 (electronic), 2005.

\bibitem{LarioSomoza}
J.-C. Lario, A.~Somoza, and C.~Vincent.
\newblock An inverse {J}acobian algorithm for {P}icard curves.
\newblock {\em Res. Number Theory}, 7(2):Paper No. 32, 23, 2021.

\bibitem{LV15b}
K.~Lauter and B.~Viray.
\newblock An arithmetic intersection formula for denominators of {I}gusa class
  polynomials.
\newblock {\em Amer. J. Math.}, 137(2):497--533, 2015.

\bibitem{LV15a}
K.~Lauter and B.~Viray.
\newblock On singular moduli for arbitrary discriminants.
\newblock {\em Amer. J. Math.}, 19:9206--9250, 2015.

\bibitem{LLLR}
R.~Lercier, Q.~Liu, E.~Lorenzo~Garc\'{\i}a, and C.~Ritzenthaler.
\newblock Reduction type of smooth plane quartics.
\newblock {\em Algebra Number Theory}, 15(6):1429--1468, 2021.

\bibitem{LR19}
R.~Lercier and C.~Ritzenthaler.
\newblock Siegel modular forms of degree three and invariants of ternary
  quartics.
\newblock {\em to appear in the Proceedings of the American Mathematical
  Society}, 2019.

\bibitem{LRS-CodeH}
R.~Lercier, C.~Ritzenthaler, and J.~Sijsling.
\newblock {\tt hyperelliptic}, a \textsc{Magma} repository for reconstruction
  and isomorphisms of hyperelliptic curves.
\newblock \href{https://github.com/JRSijsling/hyperelliptic}{\tt
  https://github.com/JRSijsling/hyperelliptic}, 2020.

\bibitem{LRS-CodeQ}
R.~Lercier, C.~Ritzenthaler, and J.~Sijsling.
\newblock {\tt quartic}, a \textsc{Magma} package for calculating with smooth
  plane quartic curves.
\newblock \href{https://github.com/JRSijsling/quartic}{\tt
  https://github.com/JRSijsling/quar\-tic}, 2020.

\bibitem{LRS23}
R.~Lercier, C.~Ritzenthaler, and J.~Sijsling.
\newblock Functionalities for genus 2 and genus 3 curves.
\newblock {\em to appear in the Proceedings Volume of the Conference MEGA
  2021}, 2021.

\bibitem{Lor20}
E.~Lorenzo~Garc\'ia.
\newblock On different expressions for invariants of hyperelliptic curves of
  genus 3.
\newblock {\em J. Math. Soc. Japan}, 74(2):403--426, 2022.

\bibitem{Oort1974}
F.~Oort.
\newblock Subvarieties of moduli spaces.
\newblock {\em Inventiones Mathematicae}, 24:95--119, 1974.

\bibitem{Oort1975}
F.~Oort.
\newblock Which abelian surfaces are products of elliptic curves?
\newblock {\em Mathematische Annalen}, 214:35--47, 1975.

\bibitem{SerreTate}
J.-P. Serre and J.~Tate.
\newblock Good reduction of abelian varieties.
\newblock {\em Ann. of Math. (2)}, 88:492--517, 1968.

\bibitem{Shimura-Taniyama}
G.~Shimura and Y.~Taniyama.
\newblock {\em Complex multiplication of abelian varieties and its applications
  to number theory}, volume~6 of {\em Publications of the Mathematical Society
  of Japan}.
\newblock The Mathematical Society of Japan, Tokyo, 1961.

\bibitem{Somozathesis}
A.~Somoza.
\newblock {\em Inverse Jacobian and related topics for certain superelliptic
  curves}.
\newblock PhD thesis, Leiden University and University of Bordeaux, 2019.

\bibitem{Streng2014}
M.~Streng.
\newblock Computing {I}gusa class polynomials.
\newblock {\em Math. Comp.}, 83(285):275--309, 2014.

\bibitem{DrewHilbert}
A.~V. Sutherland.
\newblock Computing {H}ilbert class polynomials with the {C}hinese remainder
  theorem.
\newblock {\em Math. Comp.}, 80(273):501--538, 2011.

\bibitem{sage}
{The Sage Developers}.
\newblock {\em {S}ageMath, the {S}age {M}athematics {S}oftware {S}ystem
  ({V}ersion 7.4)}, 2016.
\newblock {\tt http://www.sagemath.org}.

\bibitem{Voight}
J.~Voight.
\newblock {\em Quaternion algebras}, volume 288 of {\em Graduate Texts in
  Mathematics}.
\newblock Springer, Cham, [2021] \copyright 2021.

\bibitem{Weng}
A.~Weng.
\newblock A class of hyperelliptic {CM}-curves of genus three.
\newblock {\em J. Ramanujan Math. Soc.}, 16(4):339--372, 2001.

\bibitem{Zaytsev}
A.~Zaytsev.
\newblock Generalization of {D}euring reduction theorem.
\newblock {\em J. Algebra}, 392:97--114, 2013.

\end{thebibliography}

\end{document}